\newcommand{\ccH}{\mathcal{X}}
\newcommand{\cnn}{\delta_P}
\newcommand{\mb}{\mathbb}
\newcommand{\C}{\mathbb{C}}
\newcommand{\R}{\mathbb{R}}
\newcommand{\ra}{\to}
\newcommand{\p}{\partial}
\newcommand{\ind}{\hskip .1em\mathrm{ind}}
\newcommand{\var}{\varepsilon}
\newcommand{\ran}{\mathrm{Range}}
\newcommand{\re}{\hskip .1em\mathrm{Re}}
\newcommand{\sign}{\mathrm{sign}\hskip 1pt}
\newcommand{\J}{\mathrm{i}\R}
\newcommand{\no}[1]{|\hskip -1pt | #1 |\hskip -1pt|}
\newcommand{\coker}{\mathrm{coker}}
\newcommand{\codim}{\hskip .1em\mathrm{codim}\hskip .1em}
\newcommand{\Sf}{\mathrm{sf}}
\newcommand{\set}[2]{\left\{\hskip .1em {#1}:{#2}\hskip .1em\right\}}
\newcommand{\Fred}{\mathrm{Fred}}
\def\polk#1{\setbox0=\hbox{#1}{\ooalign{\hidewidth
  \lower1.5ex\hbox{`}\hidewidth\crcr\unhbox0}}}
\newcommand{\im}{\hskip .1em\mathrm{Im}\hskip .1em}
\newcommand{\prc}{p}
\renewcommand{\Im}{\mathrm{im}}
\newcounter{counterxapp}
\newtheorem{remark}{Remark}[section]
\newtheorem{lemma}[remark]{Lemma}
\newtheorem{theorem}[remark]{Theorem}
\newtheorem{prop}[remark]{Proposition}
\newtheorem{cor}[remark]{Corollary}
\theoremstyle{definition}
\newtheorem{defn}[remark]{Definition}
\newtheorem{example}[remark]{Example}
\newcounter{Example}
\DeclareMathAlphabet{\mathpzc}{OT1}{pzc}{m}{it}
\begin{document}
\dedicatory{To my parents and my sister}
\address[Garrisi Daniele]{%
\vskip 1pt
Republic of Korea,\vskip 1pt
Gyeongbuk 790784\vskip 1pt
Pohang-Si, Namgu, Hyojadong, San 31\vskip 1pt
Postech, Pohang Mathematics Institute}
\tableofcontents
\title[Spectral flow in Banach spaces]%
{On the spectral flow for paths of essentially hyperbolic bounded
operators on Banach spaces}

\begin{abstract}
We give a definition of the spectral flow for paths of 
bounded essentially hyperbolic operators on a Banach space. The spectral flow 
induces a group homomorphism on the fundamental group of every connected 
component of the space of essentially hyperbolic operators. We prove that this 
homomorphism completes the exact homotopy sequence of a Serre fibration.
This allows us to characterise its kernel and image and to produce examples of 
spaces where it is not injective or not surjective, unlike
what happens for Hilbert spaces. For a large class of paths, namely
the essentially splitting, the spectral flow of $ A $ coincides with 
$ -\ind(F_A) $, the Fredholm index of the differential operator
$ F_A (u) = u' - A u $.
\end{abstract}
\author[Garrisi,~D.]{Garrisi Daniele}
\thanks{This work was supported by the Priority 
Research Centers Program through the National Research Foundation of Korea 
(NRF) funded by the Ministry of Education, Science and Technology 
(Grant 2009-0094068) and the Scuola Normale Superiore of Pisa}
\keywords{Spectral flow, projectors, hyperplanes}
\subjclass{46B20,58B05,58E05,58J30}
\maketitle
\section*{Introduction}
The spectral flow first appeared in \cite{APS75} for a family
of elliptic and self-adjoint operators $ A_t $, ascribed to the
joint work of M.~Atiyah and G.~Lusztig. We outline their effective description
as ``net number of eigenvalues that change sign (from $ - $ to $ + $)
while the parameter family is completing a period'' in the definition
given by J.~Robbin and D.~Salamon in \cite[Theorem~4.21]{RS95}: In
a neighbourhood $ [t_-,t_+] $ of the real line of a point $ t\in\R $ 
(called a crossing) such that $ 0\in\sigma(A_t) $ (the spectrum of $ A_t $),
$ \sigma(A_s) $ can be described as a finite family of continuously 
differentiable curves 
$ \lambda_i\colon [t_-,t_+]\ra\R $ such that $ \lambda_i ' (t)\neq 0 $.
The contribution to the spectral flow of a crossing is given by 
\[
\sum_{\lambda_i} \sign(\lambda_i (t_+)) - \sign(\lambda_i (t_-))
\]
and the spectral flow is the sum of these contributions over all the
crossings. The spectral flow was used to define a Morse index for
1-periodic hamiltonian orbits, as in \cite[pp.~23]{Sal99}, and then to define 
the Floer homology.
The definition by J.~Robbin and D.~Salamon in \cite{RS95} 
requires some differentiability hypotheses and transversality conditions.
P.~Rabier extended their work to unbounded families of operators in Banach 
spaces in \cite{Rab04}.
In \cite{Phi96}, J.~Phillips simplified their definition as follows: 
$ A_t \in\mathcal{F}\sp{sa} (H) $ 
is assumed to be a continuous path of Fredholm, bounded and self-adjoint 
operators, on $ [0,1] $. If $ U $ is a neighbourhood of the origin and 
$ J = [t_-,t_+] $ is a closed interval such that 
\begin{itemize}
\item[1)] $ \sigma(A_s)\cap \p U = \emptyset $ for every $ s\in J $;
\item[2)] $ \sigma(A_s)\cap U $ is a finite set of eigenvalues,
\end{itemize}
then the contribution to the spectral flow from the interval $ J $ is defined 
as 
\[
\dim(P(A(t_+);U)) - \dim(P(A(t_-);U))
\]
where $ P(A;U) $ is the spectral projector of $ A $ relative to $ U $.
The spectral flow is the sum of all the contributions
obtained over a partition of the unit interval,
$ J_i $, such that a neighbourhood $ U_i $ as in 1,2) corresponds to each 
$ J_i $. It is
invariant for fixed-endpoint homotopies and defines a groups homomorphism on 
the fundamental group of each connected component of 
$ \mathcal{F}\sp {sa} (H) $. If $ H $ is infinite-dimensional and
separable, then there are exactly three connected components, corresponding to 
$ I $ and $ -I $ (these are contractible to a point) and to $ 2P - I $, where 
$ P $ is a projector with infinite-dimensional kernel and image. 
On the fundamental group of the third one, denoted 
by $ \mathcal{F}\sp{sa} _* $, the spectral flow
\[
\Sf\colon\pi_1 (\mathcal{F}\sp{sa} _*)\ra\mb{Z}
\]
is a group isomorphism. In \cite{BBLP05},~J.~Phillips later extended 
this definition to continuous families of unbounded, 
Fredholm and self-adjoint operators. In contrast with \cite{RS95}, no
assumptions of differentiability or transversality are made.
C.~Zhu and Y.~Long in \cite{ZL99} extended the definition in \cite{Phi96}
to bounded, admissible operators (which are compact perturbations of hyperbolic
operators) on Banach spaces: On every interval 
$ J_i = [t_- \sp i,t_+ \sp i ] $ of a suitable partition of $ [0,1] $, they 
provide a path of projectors $ Q_i $ such that
\begin{gather*}
Q_i\colon J_i\ra\mathcal{P}(\mathcal{L}(E))\text{ is continuous},\\
Q_i (t) - P\sp + (A(t))\text{ is compact for } t\in J_i,\\
Q_i (t)\text{ is a spectral projector of } A(t).
\end{gather*}
The contribuition of $ J_i $ to the spectral flow is
\[
[Q_i (t\sp i _-) - P\sp + (A(t\sp i _-))] - 
[Q_i (t\sp i _+) - P\sp + (A(t\sp i _+))],
\]
where $ [Q - P] $ denotes the Fredholm index of 
$ P\colon\ran(Q)\ra\ran(P) $. The spectral flow is defined as the sum
of the quantity above as $ J_i $ varies over the partition of $ [0,1] $.
We denote the spectral projector relative to the positive 
complex half-plane by $ P\sp + (A) $, and 
$ \mathcal{P}(\mathcal{L}(E)) $ is the set of 
projectors of $ E $. This work has three essential purposes:\par
I) Further simplifying the definition of spectral flow. 
Given a path $ A_t \in e\mathcal{H}(E) $ on $ [0,1] $ of essentially 
hyperbolic operators 
(which are compact perturbations of hyperbolic operators and correspond to
the admissible ones used in \cite{ZL99}), there exists a 
continuous path $ P $ of projectors on $ [0,1] $
such that 
\[
P(t) - P\sp + (A(t))\text{ is compact }
\]
for every $ t\in [0,1] $. Thus, we define
\[
\Sf(A) = [P(0) - P\sp + (A(0))] - [P(1) - P\sp + (A(1))].
\]
Therefore, we do not need to partition the unit interval (as in
the definition in \cite{ZL99}), as long as
we do not require $ P(t) $ to be a spectral projector of $ A(t) $.\par
The existence of such a path $ P $ follows from the 
homotopy lifting property of the Serre fibration
\[
\prc\colon \mathcal{P}(\mathcal{L}(E))\ra\mathcal{P}(\mathcal{C}(E)).
\]
We use $ \mathcal{C}(E) $ to denote the quotient of the operator
algebra $ \mathcal{L}(E) $ by the ideal of the compact operators.
$ \mathcal{P}(\mathcal{C}(E)) $ is the space of projectors of the Calkin
algebra and $ p $ is the quotient projection.
The definition we give of spectral flow coincides with the one in 
\cite{ZL99}. In \S\ref{Three}, we show that the spectral flow is invariant for 
fixed-endpoint homotopies and that, given two continuous paths
$ A $ and $ B $ such that $ A(1) = B(0) $, there holds
$ \Sf(A*B) = \Sf(A) + \Sf(B) $. These and other basic facts are outlined
in Proposition~\ref{prop:sf-properties}.\par
II) Studying the homomorphism properties of the spectral flow.
We prove that $ e\mathcal{H}(E) $ is homotopically equivalent to 
$ \mathcal{P}(\mathcal{C}(E)) $ and that the spectral flow completes the 
exact homotopy sequence
of the fibration above. This allows us to characterise the kernel and the 
image of the spectral flow $ \Sf_P $ defined on the fundamental group of the 
connected component of $ 2P - I\in e\mathcal{H}(E) $. Precisely: an integer
$ m $ belongs to the image of $ \Sf_P $ if and only if
\begin{itemize}
\item[h1)] there exists a projector $ Q $ connected to $ P $ by a path in the 
space of projectors $ \mathcal{P}(\mathcal{L}(E)) $, such that $ Q - P $ is 
compact and $ [P - Q] = m $;
\end{itemize}
$ \ker(\Sf_P)\cong\Im(\prc_*) $. Hence, $ \Sf_P $ is injective if and only if
\begin{itemize}
\item[h2)] $ \text{im}(\prc_*) = \{0\} $.
\end{itemize}
The existence of a projector satisfying h1,2) depends heavily on the 
structure of the Banach space $ E $.
When $ E $ is Hilbert, J.~Phillips proved in \cite{Phi96} that for every
projector with infinite-dimensional range and kernel, h1), with
$ m = 1 $, and h2) hold.
We show that $ \ell\sp p $ and $ \ell\sp\infty $ have this property as well. 
In general, at least one projector (and then infinitely many)
exists in the spaces satisfying the hypotheses of 
Proposition~\ref{prop:surjective} and Proposition~\ref{prop:injective}.\par
The question whether property h1) holds for some projector
is strongly related to the existence of complemented subspaces isomorphic to 
closed subspaces of co-dimension $ m $. This relation is highlighted
in Proposition~\ref{prop:surjective}. In fact, given a space $ E $, 
isomorphic to its hyperplanes, the projector over each of
the summands of $ E\oplus E $ fulfills property h1). Thus, $ \Sf_P $
is surjective. If $ E $ is isomorphic to its subspaces of co-dimension two, 
but not to its hyperplanes, the image 
of the spectral flow is $ 2\mb{Z} $, and so on.
Examples of such spaces have been constructed by W.~T.~Gowers and B.~Maurey 
in \cite{GM97}. If $ E $ is not isomorphic to any of its proper
subspaces, then $ \Sf_P $ is zero. Such a space was constructed by
W.~T.~Gowers and B.~Maurey in the celebrated paper \cite{GM93}.\par
We prove that in a Douady space (cf. \cite{Dou65}), there are 
projectors $ P $
such that $ \Sf_P $ is not injective and projectors with infinite-dimensional 
range and kernel such that the spectral flow is zero.\par
III) Comparing the spectral flow with the Fredholm index of the 
differential operator
\[
F_A\colon W\sp{1,p} (\R)\ra L\sp p (\R),\ \ u\mapsto
\left(\frac{d}{dt} - A(t)\right)u.
\]
In \S\ref{Five}, we extend the definition of spectral flow for a path
$ A_t\in e\mathcal{H}(E) $ on $ \mathbb{R} $ with hyperbolic limits at
$ \pm\infty $. We prove in Theorem~\ref{thm:final} that for a large class of 
paths, which are essentially hyperbolic, with hyperbolic limits, 
and \textsl{essentially splitting} (cf. \cite{AM03}), the equality 
\[
\ind(F_A) = - \Sf(A)
\]
holds. The equality above applies, for instance, in the special case where 
$ A $ is a continuous, compact perturbation of a path of hyperbolic operators 
with some boundary conditions (check \cite[Theorem~E]{AM03}).
Our theorem confirms the guess of A.~Abbondandolo and P.~Majer in \S7
of \cite{AM03} that for these paths, the equality above holds.
\vskip .4em
We remark that our work deals with \textsl{bounded} operators. The 
differential operator $ F_A $ arises naturally from the linearisation of 
a vector field $ \xi\in C\sp 1 (E,E) $ on a solution 
$ v'(t) = \xi(v(t)) $, such that the endpoints are zeroes of
$ \xi $ and $ A(t) = D\xi(v(t)) $. If $ F_A $ is Fredholm and surjective
(and the zeroes are hyperbolic), then the set
\[
W_\xi (p,q) = \{v\colon\R\ra E:v'(t) = \xi(v(t)),
v(-\infty) = p,v(+\infty) = q\}
\]
is a sub-manifold of dimension $ \ind(F_A) $. This constitutes a 
landmark for the study of the Morse theory on Banach 
manifolds, as in \cite{AM06}. A proof of this can be found in 
\cite[\S8]{AM03}. If $ A $ fulfils the hypotheses of Theorem~\ref{thm:final}, 
then $ \Sf(A) $ determines the dimension of the manifold.\vskip .4em
The spectral flow also provides an index for 1-periodic solutions
of $ u' (t) = X(u(t)) $. If $ DX(u_t) $ is essentially hyperbolic,
then $ \mathrm{index}_X (u) = \Sf(DX (u_t)) $. Thus, in order
to have a good Morse theory for periodic solutions,
the question whether there are loops with 
non-trivial spectral flow becomes relevant.
\vskip .4em
\textsc{Acknowledgements}.
I would like to thank Prof.~Alberto~Abbondandolo
and Prof.~Pietro~Majer for their aid and suggestions, the referees for
their careful reading and for pointing out to me the papers of Yiming~Long
and Zhu~Chaofeng. I also thank my parents and my sister for always 
supporting me.
\section{Preliminaries}
Here we review some basic definitions and results on spectral theory and
Fredholm operators. Our main references are \cite[Ch.~X]{Rud91} and 
\cite[IV.4,5]{Kat95}.
\subsection{Spectral theory}
A \textsl{Banach algebra} is an algebra 
$ \mathcal{A} $ with unit, 1, over the real or complex field and a norm 
$ \no{\cdot} $ such that
\begin{gather*}
\big(\mathcal{A},\no{\cdot}\big)\text{ is a Banach space }\\
\no{xy}\leq\no{x}\cdot\no{y}\text{ for every } x,y\in\mathcal{A}.
\end{gather*}
we denote by $ G(\mathcal{A}) $ the set of invertible elements of the 
algebra; it is an open subset of $ \mathcal{A} $. 
Given $ x\in\mathcal{A} $, the subset of the field
\[
\sigma(x) = \{\lambda\in\mb{F}:x - \lambda\cdot 1\not\in G(\mathcal{A})\}
\]
is called the \textsl{spectrum} of $ x $. Let us review some properties
of the spectrum.
\begin{prop}
\label{prop:spectrum-basics}
For every $ x\in\mathcal{A} $, $ t\in\mb{F} $ and $ \Omega\subset\mb{F} $ 
an open subset,
\begin{enumerate}
\item if $ \sigma(x) $ is non-empty, then it is closed and bounded;
\item $ \sigma(x + t) = \sigma(x) + t $, $ \sigma(tx) = t\sigma(x) $;
\item then there exists $ \delta > 0 $ such that $ \sigma(y)\subset\Omega $
for every $ y\in B(x,\delta) $;
\item if $ \mathcal{A} $ is complex, then $ \sigma(x) $ is non-empty;
\item if $ f\colon\mathcal{A}\ra\mathcal{B} $ is an algebras homomorphism
such that $ f(1) = 1 $, then $ \sigma(f(x))\subseteq\sigma(x) $.
\end{enumerate}
\end{prop}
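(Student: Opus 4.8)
The plan is to treat the five assertions separately; four of them follow purely from the openness of $G(\mathcal{A})$ and the Neumann series, while (iv) is where the real work lies.

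For (i), note that $\lambda\mapsto x-\lambda\cdot 1$ is a continuous map $\mb{F}\to\mathcal{A}$, so $\sigma(x)$, being the preimage of the closed set $\mathcal{A}\setminus G(\mathcal{A})$, is closed; and if $|\lambda|>\no{x}$ then $\no{\lambda^{-1}x}<1$, so $1-\lambda^{-1}x$ is inverted by $\sum_{n\ge 0}(\lambda^{-1}x)^n$ and hence $x-\lambda=-\lambda(1-\lambda^{-1}x)\in G(\mathcal{A})$, giving $\sigma(x)\subseteq\{|\lambda|\le\no{x}\}$. For (ii), I rewrite $(x+t)-\lambda\cdot 1=x-(\lambda-t)\cdot 1$, and, for $t\ne0$, $tx-\lambda\cdot 1=t(x-(\lambda/t)\cdot 1)$, so that the left side is non-invertible exactly when $x-(\lambda-t)\cdot 1$, respectively $x-(\lambda/t)\cdot 1$, is; the case $t=0$ reduces to $\sigma(0)=\{0\}$ (a non-zero algebra has $0$ non-invertible), which over $\C$ is also covered by (iv). For (v), if $\lambda\notin\sigma(x)$ pick $u$ with $u(x-\lambda)=(x-\lambda)u=1$; applying $f$ and using $f(1)=1$, additivity and multiplicativity gives $f(u)(f(x)-\lambda)=(f(x)-\lambda)f(u)=1$ in $\mathcal{B}$, so $\lambda\notin\sigma(f(x))$, i.e.\ $\sigma(f(x))\subseteq\sigma(x)$ by contraposition.

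For (iii) --- which I read with its evidently intended hypothesis $\sigma(x)\subseteq\Omega$ --- I use a compactness argument: by (i) the set $K:=(\mb{F}\setminus\Omega)\cap\{|\lambda|\le\no{x}+1\}$ is compact and disjoint from $\sigma(x)$, so $\{x-\lambda:\lambda\in K\}$ is a compact subset of the open set $G(\mathcal{A})$ and $\eta:=\dist(\{x-\lambda:\lambda\in K\},\mathcal{A}\setminus G(\mathcal{A}))>0$. Then for $\no{y-x}<\delta:=\min(1,\eta)$: each $\lambda\in K$ has $\no{(y-\lambda)-(x-\lambda)}<\eta$, so $y-\lambda\in G(\mathcal{A})$, while for $|\lambda|>\no{x}+1\ge\no{y}$ the Neumann estimate again puts $y-\lambda$ in $G(\mathcal{A})$; hence $\sigma(y)\subseteq\Omega$, and $\delta$ is as required.

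The main obstacle is (iv), which I would prove by the Gelfand/Liouville argument: assuming $\mathcal{A}$ complex and $\sigma(x)=\emptyset$, the resolvent $R(\lambda):=(x-\lambda)^{-1}$ is defined on all of $\C$, and the resolvent identity $R(\lambda)-R(\mu)=(\lambda-\mu)R(\lambda)R(\mu)$ together with continuity of inversion shows $\lambda\mapsto\phi(R(\lambda))$ is entire for every $\phi\in\mathcal{A}^{\ast}$; since $R(\lambda)=-\lambda^{-1}(1-\lambda^{-1}x)^{-1}\to0$ as $|\lambda|\to\infty$, this scalar function is bounded and vanishes at infinity, hence is $\equiv 0$ by Liouville, and Hahn--Banach then forces $R(\lambda)=0$, contradicting invertibility in $\mathcal{A}\ne\{0\}$. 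Everything else is bookkeeping with the Neumann series and the openness of $G(\mathcal{A})$, whereas (iv) genuinely needs completeness of $\mathcal{A}$, the complex field, and a vector-valued Liouville theorem (equivalently, holomorphy of the resolvent plus Hahn--Banach).
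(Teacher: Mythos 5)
Your proof is correct, and it is essentially the textbook argument; the paper itself does not prove Proposition~\ref{prop:spectrum-basics} at all but merely collects these standard facts from its references (Rudin, Ch.~X, and Kato, IV.4--5), so there is no internal proof to diverge from. You correctly supplied the tacit hypothesis $\sigma(x)\subseteq\Omega$ in (iii), and the compactness-plus-Neumann argument you give there, together with the Liouville/Hahn--Banach argument for (iv), is exactly what the cited sources do. One small remark: in (ii), the sub-case $t=0$ with $\mathcal{A}$ real requires $\sigma(x)\neq\emptyset$ for $t\sigma(x)=\{0\}=\sigma(0)$ to hold; this is harmless here because the paper's convention (spectrum of the complexification) guarantees non-emptiness, but it is worth flagging when stating (ii) over $\mathbb{R}$ without that convention.
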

Given a real algebra, we can consider the complex algebra associated with
it, $ \mathcal{A}_{\C} = \mathcal{A}\otimes_{\R} \C $. We have an inclusion
of algebras 
\[
\mathcal{A}\hookrightarrow \mathcal{A}_{\C},\ \ x\mapsto x\otimes 1
\]
and $ \sigma(x)\subseteq\sigma (x\otimes 1) $. Hereafter, we will
take $ \sigma(x\otimes 1) $ as the spectrum of $ x $. 
\begin{defn}
\label{defn:simple}
A finite family of closed curves in the complex plane, 
$ \Gamma = \{c_i:1\leq i\leq n\} $, is said to be \textsl{simple} if, 
for every $ z\not\in\bigcup_{i = 1}\sp n \Im(c_i) $,
\[
\ind_{\Gamma} (z) :=
\frac{1}{2\pi i}\int_{\Gamma} \frac{d\zeta}{z - \zeta}
= \frac{1}{2\pi i}\sum_{i = 1}\sp n \int_{c_i} \frac{d\zeta}{z - \zeta}
\in\{0,1\}.
\]
we denote by $ \Omega_0 (\Gamma) $ and $ \Omega_1 (\Gamma) $ the subsets of 
the complex space such that $ \ind_{\Gamma} (z) $ is $ 0 $ and $ 1 $, 
respectively.
\end{defn}
\begin{defn}
An element $ p\in\mathcal{A} $ is said to be a \textsl{projector} if 
$ p\sp 2 = p $. We can associate with it the sub-algebra 
$ \mathcal{A}(p) = \{pxp:x\in\mathcal{A}\} $, with the unit $ p $.
we denote by $ \sigma_p (y) $ the spectrum of an element 
$ y\in\mathcal{A}(p) $.
\end{defn}
If $ xp = px $, we have the equality
\begin{equation}
\label{eq:sp}
\sigma(x) = \sigma_p (xp)\cup\sigma_{1 - p} (x(1 - p)).
\end{equation}
\begin{theorem}
\label{thm:spectrum-projector}
Let $ x\in\mathcal{A} $, $ \Sigma\subset\sigma(x) $ open and closed in
$ \sigma(x) $. Then, there exists a projector, called a  
\textsl{spectral projector} relative to $ \Sigma $, which we denote by 
$ P(x;\Sigma) $, such that
\begin{enumerate}
\item $ \sigma_P (PxP) = \Sigma $;
\item $ Px = xP $;
\item $ I - P = P(x;\Sigma\sp c) $;
\item given an algebra homomorphism, $ f\colon\mathcal{A}\ra\mathcal{B} $,
$ f(P(x;\Sigma)) = P(f(x);\Sigma) $.
\end{enumerate}
Given $ \Gamma $ simple such that $ \Sigma = \Omega_1 (\Gamma)\cap\sigma(x) $,
\begin{equation*}
P(x;\Sigma) = P_{\Gamma} (x) 
:= \frac{1}{2\pi i}\int_{\Gamma} (x - \zeta)\sp {-1} d\zeta.
\end{equation*}
On the open subset $ \mathcal{A}(\Gamma) = \{x\in\mathcal{A}:
\sigma(x)\cap\Gamma = \emptyset\} $, $ P_{\Gamma} (x) $ is a continuous map.
\end{theorem}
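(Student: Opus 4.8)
This is the projector case of the Riesz--Dunford holomorphic functional calculus, and I would organise a proof as follows.

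\emph{Construction.} Work in the complex algebra $\mathcal{A}_{\C} = \mathcal{A}\otimes_{\R}\C$, so that $\sigma(x) = \sigma(x\otimes 1)$ is a non-empty compact subset of $\C$. Since $\Sigma$ is open and closed in the compact set $\sigma(x)$, the sets $\Sigma$ and $\Sigma\sp c := \sigma(x)\setminus\Sigma$ are compact and $\dist(\Sigma,\Sigma\sp c) > 0$; covering $\Sigma$ by finitely many open disks of radius $<\frac13\dist(\Sigma,\Sigma\sp c)$ and taking the boundary of their union, oriented so that $\ind_{\Gamma}$ is $1$ on $\Sigma$ and $0$ on $\Sigma\sp c$, gives a simple system $\Gamma$ with $\Gamma\cap\sigma(x) = \emptyset$, $\Sigma\subset\Omega_1$ and $\Sigma\sp c\subset\Omega_0$. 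As $z\mapsto(x-z)\sp{-1}$ is holomorphic on $\C\setminus\sigma(x)\supset\Gamma$, the integral $P_{\Gamma}(x)$ is defined, and any two such systems are homologous in $\C\setminus\sigma(x)$ (equal winding numbers around every point of $\sigma(x)$), so by Cauchy's theorem $P_{\Gamma}(x)$ does not depend on $\Gamma$. I set $P = P(x;\Sigma) := P_{\Gamma}(x)$, which is the formula claimed at the end of the statement.

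\emph{The formal properties.} That $Px = xP$ is immediate, $x$ commuting with each $(x-z)\sp{-1}$. For $P\sp 2 = P$ --- the one genuine computation --- I would take a second admissible system $\Gamma'$ with $\Gamma$ in its index-one region, multiply the two integrals, and apply the resolvent identity $(x-z)\sp{-1}(x-w)\sp{-1} = (z-w)\sp{-1}\bigl((x-z)\sp{-1}-(x-w)\sp{-1}\bigr)$ together with Fubini: the part carrying $\frac1{2\pi\imath}\int_{\Gamma'}\frac{dw}{z-w} = \ind_{\Gamma'}(z) = 1$ (for $z\in\Gamma$) reassembles to $P$, while the part carrying $\frac1{2\pi\imath}\int_{\Gamma}\frac{dz}{z-w} = -\ind_{\Gamma}(w) = 0$ (for $w\in\Gamma'$) drops out. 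Property (iii), $I - P = P(x;\Sigma\sp c)$, follows from additivity of the Riesz integral over the disjoint union of a system adapted to $\Sigma$ and one adapted to $\Sigma\sp c$, together with $P(x;\sigma(x)) = I$ (the Riesz integral over a loop enclosing all of $\sigma(x)$). For (iv), a continuous unital homomorphism $f$ satisfies $f((x-z)\sp{-1}) = (f(x)-z)\sp{-1}$ and commutes with the contour integral, so $f(P_{\Gamma}(x)) = P_{\Gamma}(f(x))$; as $\sigma(f(x))\subseteq\sigma(x)$ by Proposition~\ref{prop:spectrum-basics}(v), the right side is $P(f(x);\Sigma\cap\sigma(f(x)))$, which is the asserted $P(f(x);\Sigma)$ under that reading of the symbol. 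Finally $\mathcal{A}(\Gamma)$ is open by Proposition~\ref{prop:spectrum-basics}(iii) with $\Omega = \C\setminus\Gamma$, and near a point $x_0\in\mathcal{A}(\Gamma)$ a Neumann-series argument bounds $\no{(x-z)\sp{-1}}$ uniformly in $z\in\Gamma$ while the resolvent identity gives $\no{(x-z)\sp{-1}-(x_0-z)\sp{-1}}\leq C\no{x-x_0}$ uniformly on $\Gamma$; integrating over the compact curve $\Gamma$ yields continuity of $P_{\Gamma}$.

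\emph{The spectral identity, and the main obstacle.} Property (i), $\sigma_P(PxP) = \Sigma$, is the delicate point. By $Px = xP$, equation \eqref{eq:sp} applied to $P$ gives $\sigma(x) = \sigma_P(PxP)\cup\sigma_{1-P}((1-P)x(1-P))$, and each summand lies in $\sigma(x)$, since for $\lambda\notin\sigma(x)$ the element $P(x-\lambda)\sp{-1}P$ inverts $PxP-\lambda P$ in $\mathcal{A}(P)$ (and symmetrically on the other corner). As $\sigma(x) = \Sigma\sqcup\Sigma\sp c$, it suffices to show $\sigma_P(PxP)\subseteq\Sigma$; the companion inclusion is the same statement for $\Sigma\sp c$ and $I-P = P(x;\Sigma\sp c)$. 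So fix $\lambda\in\Sigma\sp c$ and choose $h$ holomorphic on disjoint neighbourhoods of $\Sigma$ and of $\Sigma\sp c$, with $h(z) = (z-\lambda)\sp{-1}$ near $\Sigma$ (legitimate since $\dist(\lambda,\Sigma) > 0$) and $h\equiv 0$ near $\Sigma\sp c$; put $y := \frac1{2\pi\imath}\int_{\Gamma} h(z)(x-z)\sp{-1}\,dz$. From $(x-z)\sp{-1}(x-\lambda) = I + (z-\lambda)(x-z)\sp{-1}$, the identity $h(z)(z-\lambda) = 1$ on $\Gamma$, and $\frac1{2\pi\imath}\int_{\Gamma} h(z)\,dz = 0$ ($h$ holomorphic inside $\Gamma$), one gets $y(x-\lambda) = P$; a double-integral computation like the one for $P\sp 2$ gives $Py = yP = y$, so $y\in\mathcal{A}(P)$ and $y(PxP-\lambda P) = y(x-\lambda)P = P$. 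Since $y$, $P$, $x$ commute, $y$ is a two-sided inverse of $PxP-\lambda P$ in $\mathcal{A}(P)$, hence $\lambda\notin\sigma_P(PxP)$. The main obstacle is precisely this last step: one must isolate the spectrum \emph{relative to the corner algebra} $\mathcal{A}(P)$, which equals $\Sigma$, from the spectrum in the full algebra, $\sigma_{\mathcal{A}}(PxP) = \Sigma\cup\{0\}$ in general, and the auxiliary element $y$ is what removes the spurious $0$. Everything else is the classical construction, for which I would refer to \cite[Ch.~X]{Rud91}.
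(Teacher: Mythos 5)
Your proposal is correct, and it is the standard Riesz--Dunford construction. The paper gives no proof of its own for this theorem and simply refers to \cite[Theorem~10.27]{Rud91} and \cite[Theorem~6.17]{Kat95}, which present essentially the argument you wrote out: the contour-integral definition of $P$, idempotency via the resolvent identity and Fubini, additivity for (iii), commutation of a (continuous) homomorphism with the Bochner integral for (iv), and the auxiliary element $y = h(x)$ with $h(z)(z-\lambda)\equiv 1$ near $\Sigma$ to invert $PxP - \lambda P$ inside the corner algebra $\mathcal{A}(P)$ for (i). You correctly flag the two genuinely delicate points --- that (iv) tacitly requires $f$ to be bounded so that it passes through the integral, and that $\sigma_P(PxP)=\Sigma$ must be distinguished from $\sigma_{\mathcal{A}}(PxP)$, which in general also contains $0$ --- neither of which the paper addresses explicitly.
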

For the proof and more details check \cite[Theorem~10.27]{Rud91}
or \cite[Theorem~6.17]{Kat95}.
\subsection{Spaces of projectors}
\label{ssect:spaces-of-projectors}
Given Banach spaces $ E $ and $ F $, we denote by $ \mathcal{L}(E,F) $ 
the space of bounded operators from $ E $ to $ F $. When $ E = F $ we
use the notation $ \mathcal{L}(E) $. 
We denote the subsets of compact operators by $ \mathcal{L}_c (E,F) $ and
$ \mathcal{L}_c (E) $.
The composition of operators endows the space $ \mathcal{L}(E) $ 
with the structure of a Banach algebra (the identity operator
being the unit), and the subspace of compact operators is a closed ideal. 
We denote the quotient algebra by
$ \mathcal{C}(E) $. It inherits the structure of a Banach algebra and is 
called the \textsl{Calkin algebra}. The quotient projection
\begin{equation}
\label{eq:quotient}
\prc\colon \mathcal{L}(E)\ra \mathcal{C}(E),\ \ A\mapsto A + \mathcal{L}_c (E)
\end{equation}
is an algebra homomorphism. 
\begin{defn}
Given a Banach algebra $ \mathcal{A} $, we define the following subsets
\begin{enumerate}
\item $ \mathcal{P}(\mathcal{A}) = \{p\in\mathcal{A}:p\sp 2 = p\} $, 
\textsl{projectors};
\item $ \mathcal{Q}(\mathcal{A}) = \{q\in\mathcal{A}:q\sp 2 = 1\} $,
\textsl{square roots of the unit};
\item $ \mathcal{H}(\mathcal{A}) = \{x\in\mathcal{A}:\sigma(x)\cap 
i\R = \emptyset\} $, \textsl{hyperbolic} elements.
\end{enumerate}
\end{defn}
\subsubsection*{Properties and remarks}
$ \mathcal{P} $ and $ \mathcal{Q} $ are closed subsets, locally
path connected and analytic sub-manifolds. A proof of this can be
found in \cite[Lemma~1.5]{AM09}. $ \mathcal{P} $ and $ \mathcal{Q} $ 
are diffeomorphic to each other through the diffeomorphism 
$ p\mapsto 2p - 1 $. By (iii) of 
Proposition~\ref{prop:spectrum-basics}, 
applied with $ \Omega = \{z:\re(z)\neq 0\} $,
the subset $ \mathcal{H}(\mathcal{A})\subset\mathcal{A} $ is open.
We denote by $ G_1 (\mathcal{A}) $ the connected component of 
$ G(\mathcal{A}) $ of the unit. 
\begin{theorem}
\label{thm:pair-of-projectors}
Given two projectors $ p,q $ such that either $ \no{p - q} < 1 $ or both
are in the same connected component of $ \mathcal{P}(\mathcal{A}) $,
there exists $ u\in G_1 (\mathcal{A}) $ such that $ up = qu $.
\end{theorem}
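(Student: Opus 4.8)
The plan is to settle first the case $\no{p - q} < 1$ by writing down an explicit intertwiner lying in the identity component, and then reduce the ``same connected component'' case to finitely many applications of it. For the case $\no{p - q} < 1$, I would put
\[
u := qp + (1 - q)(1 - p), \qquad v := pq + (1 - p)(1 - q), \qquad r := 1 - (p - q)^2 .
\]
A direct computation gives $up = qp = qu$, so $u$ intertwines $p$ and $q$; it also gives $uv = vu = r$, $u + v = 2r$, and $rp = pr = pqp$, $rq = qr = qpq$, so $r$ commutes with $p$, $q$, $u$ and $v$. Since $\no{(p - q)^2} \le \no{p - q}^2 < 1$, the element $r$ is invertible by the Neumann series, hence so is $u$, with $u^{-1} = r^{-1}v$. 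This already produces $u \in G(\mathcal{A})$ with $up = qu$; the nontrivial point is to push $u$ into $G_1 (\mathcal{A})$.

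For that, observe that $\sigma(r)$ lies in the disc $\{\, z : |z - 1| < \no{p - q}^2 < 1 \,\}$, so the principal square root is holomorphic near $\sigma(r)$, and I would set $c := r^{1/2}$ by the holomorphic functional calculus (the machinery underlying Theorem~\ref{thm:spectrum-projector}); then $c$ is invertible, $c^2 = r$, $c$ commutes with $p, q, u, v$, and $c \in G_1 (\mathcal{A})$ via the path $t \mapsto (1 - t(p-q)^2)^{1/2}$, whose members all have spectrum in the right half--plane. Setting $\omega := u c^{-1}$ one still has $\omega p = q\omega$, while $u + v = 2r$ yields $\omega + \omega^{-1} = c^{-1}(u + v) = 2c$, whence $(\omega - c)^2 = c^2 - 1 = -(p - q)^2$. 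Writing $m := \omega - c$ — so $m$ commutes with $c$ and $m^2 = -(p - q)^2$ has norm $< 1$ — the path
\[
\omega_t := \bigl( 1 - t^2 (p - q)^2 \bigr)^{1/2} + t\,m, \qquad t \in [0,1],
\]
stays inside $G(\mathcal{A})$ — its inverse is $\bigl( 1 - t^2 (p - q)^2 \bigr)^{1/2} - t\,m$, since the two terms commute and the cross terms cancel — and runs from $\omega_0 = 1$ to $\omega_1 = \omega$. Hence $\omega \in G_1 (\mathcal{A})$, and because $G_1 (\mathcal{A})$ is a subgroup, $u = \omega c \in G_1 (\mathcal{A})$.

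For the general case, if $p$ and $q$ are in the same connected component of $\mathcal{P}(\mathcal{A})$ I would pick a path $\gamma \colon [0,1] \to \mathcal{P}(\mathcal{A})$ joining them; by uniform continuity there is a partition $0 = t_0 < \dots < t_n = 1$ with $\no{\gamma(t_i) - \gamma(t_{i-1})} < 1$, and the previous case supplies $u_i \in G_1 (\mathcal{A})$ with $u_i\,\gamma(t_{i-1}) = \gamma(t_i)\,u_i$; then $u := u_n u_{n-1}\cdots u_1 \in G_1 (\mathcal{A})$ satisfies $up = qu$. The one delicate point is membership in $G_1 (\mathcal{A})$: a straight-line homotopy from $1$ to $u$ need not remain invertible, and $\sigma(u)$ need not avoid a half-line, so $u$ cannot be joined to $1$ by a naive exponential path. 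The passage through the normalisation $u = \omega c$ with $\omega = u c^{-1}$, and the explicit formula for $\omega_t$ together with its inverse, is exactly what circumvents this, and I expect the verification of that formula (and the continuity of the functional calculus involved) to be the heart of the argument.
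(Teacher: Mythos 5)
Your argument is correct, and I verified all the identities: $up = qp = qu$, $uv = vu = r$, $u + v = 2r$, that $r$ commutes with $p$ and $q$ (hence with the whole subalgebra generated by them, in particular with $u$ and $v$), that $m = \omega - c$ commutes with $(p-q)^2$ so the two terms in $\omega_t$ commute, and that $\omega_t \bigl((1 - t^2 (p-q)^2)^{1/2} - tm\bigr) = 1$. The reduction of the connected-component case to a finite chain of $\no{\gamma(t_i)-\gamma(t_{i-1})}<1$ steps, using that $G_1(\mathcal{A})$ is a subgroup, is also sound. Note that the paper does not prove this theorem at all; it simply cites \cite[Proposition~1.2]{AM09}. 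Your proof uses the classical Kaplansky intertwiner $u = qp + (1-q)(1-p)$, which is the same starting point as that reference, but the delicate point you correctly flag --- that in a general Banach algebra $\no{2p-1}$ has no universal bound, so $u$ need not be norm-close to $1$ and a straight-line or exponential homotopy to $1$ can fail --- is genuinely the crux, and your normalisation $\omega = uc^{-1}$ with the explicit commuting square-root path $\omega_t$ is a clean, self-contained way to settle it without appealing to polar decomposition (which is only available in the $C^*$ setting). This makes the argument more explicit than a bare citation, and arguably more elementary than invoking the local Banach-manifold structure of $\mathcal{P}(\mathcal{A})$.
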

For the proof and details, we refer to 
\cite[Proposition~4.2]{PR87} and \cite[Proposition~2.2]{Gar08}.
The theorem above has two consequences:
\begin{itemize}
\item[(c1)] $ \mathcal{P}(\mathcal{A}) $ is locally path-connected. So
is $ \mathcal{Q}(\mathcal{A}) $;
\item[(c2)] when $ \mathcal{A} = \mathcal{L}(E) $, two projectors in the
same connected component have isomorphic ranges and kernels.
\end{itemize} 
The quotient projection $ \prc $ in (\ref{eq:quotient}) restricts to the 
subset of projectors and roots of the unit
\begin{gather*}
\mathcal{P}(\prc)\colon\mathcal{P}(\mathcal{L}(E))\ra
\mathcal{P}(\mathcal{C}(E)),\ \ 
P\mapsto P + \mathcal{L}_c (E)\\
\mathcal{Q}(\prc)\colon\mathcal{Q}(\mathcal{L}(E))\ra
\mathcal{Q}(\mathcal{C}(E)),\ \ 
Q\mapsto Q + \mathcal{L}_c (E).
\end{gather*}
\begin{defn}
A continuous map $ p\colon E\ra B $ has the
\textsl{homotopy lifting property} w.r.t. a topological space $ X $
if, given continuous maps
\[
h\colon X\times [0,1]\rightarrow B, \ \ \ 
f\colon X\times\{0\}\rightarrow E,
\]
there exists $ H\colon X\times [0,1]\rightarrow E $ such that
$ H(x,0) = f(x,0) $ and $ p\circ H = h $. If the homotopy 
lifting property holds w.r.t. $ [0,1]^n $ for every $ n\geq 0 $, then
$ \prc $ is called a \textsl{Serre fibration}.
\end{defn}
\begin{prop}
\label{prop:loc-sect-Calkin}
The maps $ \mathcal{P}(\prc) $ and $ \mathcal{Q}(\prc) $ are surjective
Serre fibrations.
\end{prop}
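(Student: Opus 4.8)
The plan is to get surjectivity from a spectral-projector construction, and the homotopy lifting property from transporting lifted projectors along short arcs in the base, the transport being built from the explicit conjugating invertible of Theorem~\ref{thm:pair-of-projectors}. \textbf{For surjectivity}, I would take $\bar q\in\mathcal P(\mathcal C(E))$, lift it to any $A\in\mathcal L(E)$ with $\prc(A)=\bar q$, and observe that $A^2-A$ is compact, so by Atkinson's theorem the essential spectrum of $A$ equals $\sigma(\prc(A))=\sigma(\bar q)\subseteq\{0,1\}$. By the Fredholm--Riesz theory (\cite{Kat95}), $\sigma(A)\setminus\{0,1\}$ consists of isolated eigenvalues of finite algebraic multiplicity accumulating at most at $0$ and $1$; hence I can choose circles $\gamma_0,\gamma_1$ of radius $<\frac13$ about $0$ and $1$ disjoint from $\sigma(A)$, and then $P:=P(A;\Sigma_1)$, the spectral projector of Theorem~\ref{thm:spectrum-projector} for the open--closed part $\Sigma_1\subseteq\sigma(A)$ enclosed by $\gamma_1$, is a genuine projector with $\prc(P)=P(\prc(A);\Sigma_1)=P(\bar q;\Sigma_1)=\bar q$, using naturality (iv) of that theorem and that $\Sigma_1$ separates $1$ from $0$ in $\sigma(\bar q)$. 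Finally, the diffeomorphism $x\mapsto 2x-1$ carries $\mathcal P(\mathcal L(E))$ onto $\mathcal Q(\mathcal L(E))$ and $\mathcal P(\mathcal C(E))$ onto $\mathcal Q(\mathcal C(E))$ and intertwines $\mathcal P(\prc)$ with $\mathcal Q(\prc)$ (as $\prc$ is a unital homomorphism), so $\mathcal Q(\prc)$ is onto as well and it suffices to treat $\mathcal P(\prc)$ below.

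\textbf{For the fibration property}, I would first fix a continuous section $s\colon\mathcal C(E)\to\mathcal L(E)$ of the surjective bounded linear map $\prc$ (by the Bartle--Graves theorem), normalized so $s(1)=1$, and introduce the transport map
\[
\tau(Q,\bar q):=u\,Q\,u^{-1},\qquad u:=s\bigl(w(\prc(Q),\bar q)\bigr),\qquad w(\bar q',\bar q):=1+(\bar q-\bar q')(2\bar q'-1),
\]
defined on the open set of pairs $(Q,\bar q)\in\mathcal P(\mathcal L(E))\times\mathcal P(\mathcal C(E))$ for which $u$ is invertible. A direct computation gives $w(\bar q',\bar q)\,\bar q'=\bar q\,w(\bar q',\bar q)$ and $w(\bar q',\bar q')=1$ (this is the formula behind Theorem~\ref{thm:pair-of-projectors}), so this domain contains the diagonal $\{(Q,\prc(Q))\}$ (there $u=s(1)=1$); and on it $\tau$ is continuous, takes values in $\mathcal P(\mathcal L(E))$ (a conjugate of a projector), satisfies $\prc(\tau(Q,\bar q))=w(\prc(Q),\bar q)\,\prc(Q)\,w(\prc(Q),\bar q)^{-1}=\bar q$, and $\tau(Q,\prc(Q))=Q$.

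\textbf{Then I would run the standard subdivision argument.} Given continuous $h\colon[0,1]^n\times[0,1]\to\mathcal P(\mathcal C(E))$ and $f\colon[0,1]^n\to\mathcal P(\mathcal L(E))$ with $\prc\circ f=h(\cdot,0)$, I use compactness of $[0,1]^n\times[0,1]$ together with uniform continuity of $h$ and of $s$ on a neighbourhood of the compact set $\{w(\bar q',\bar q):\bar q',\bar q\in h([0,1]^n\times[0,1])\}$ to subdivide into boxes $\sigma_j\times[t_k,t_{k+1}]$, $0=t_0<\dots<t_N=1$, so small that $s(w(h(x,t_k),h(x,t)))$ is invertible throughout each box; then I define $H$ layer by layer, $H(\cdot,0)=f$ and $H(x,t):=\tau(H(x,t_k),h(x,t))$ on $\sigma_j\times[t_k,t_{k+1}]$. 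Inductively $\prc(H(\cdot,t_k))=h(\cdot,t_k)$, so this is legitimate; adjacent boxes agree because they use the same formula; the layers glue continuously by $\tau(Q,\prc(Q))=Q$; and $\prc\circ H=h$ with $H$ projector-valued, solving the lifting problem, so $\mathcal P(\prc)$ and hence $\mathcal Q(\prc)$ is a Serre fibration. \emph{The hard part} is not the subdivision bookkeeping but the two structural ingredients: the spectral-projector lift, which rests on the Fredholm--Riesz description of $\sigma(A)$ outside its essential part and on naturality of spectral projectors; and the existence of the equivariant transport $\tau$, which combines the explicit conjugation formula of Theorem~\ref{thm:pair-of-projectors} with a continuous section of $\prc$.
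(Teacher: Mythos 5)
Your proof is correct and, unlike the paper, actually supplies an argument: the paper merely cites \cite[Theorem~2.4]{PR90} for the fact that a surjective Banach algebra homomorphism restricts to a Serre fibration (indeed, a locally trivial bundle) on idempotents, and \cite[Proposition~4.1]{AM09} for surjectivity. Your surjectivity argument via the Riesz projector around $1$ is essentially what those references do. Your transport map $\tau$, built from the conjugating element $w(\bar q',\bar q)=1+(\bar q-\bar q')(2\bar q'-1)$ of Theorem~\ref{thm:pair-of-projectors} combined with a Bartle--Graves section $s$ of $\prc$ normalized by $s(1)=1$, together with the standard subdivision, is a clean self-contained route to the homotopy lifting property; it is in the spirit of \cite{PR90} but avoids invoking local triviality of the bundle. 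The computations $w\bar q'=\bar q w=\bar q\bar q'$, $w(\bar q',\bar q')=1$, and the chain $\prc(\tau(Q,\bar q))=w\,\prc(Q)\,w^{-1}=\bar q$ all check out, and the domain of $\tau$ is indeed open and contains the diagonal.

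Two small points deserve a word. First, when you say that $\sigma(A)\setminus\{0,1\}$ consists of isolated eigenvalues of finite multiplicity, you should add why: since $\sigma_e(A)\subseteq\{0,1\}$, the open set $\C\setminus\sigma_e(A)$ is connected (the complement of at most two points) and contains the unbounded component of $\C\setminus\sigma(A)$; hence $A-\lambda$ has Fredholm index $0$ throughout $\C\setminus\sigma_e(A)$, and the classical analytic Fredholm alternative then gives discreteness there. Without the connectedness remark, one could in principle have a whole component of $\C\setminus\sigma_e(A)$ lying inside $\sigma(A)$. Second, in $\prc(P)=P(\prc(A);\Sigma_1)$ one should read the right-hand side as the spectral projector of $\bar q$ for the part of $\sigma(\bar q)$ enclosed by $\gamma_1$, i.e.\ $\Sigma_1\cap\sigma(\bar q)\subseteq\{1\}$; this is immediate from the contour-integral form of naturality in Theorem~\ref{thm:spectrum-projector}, but the set $\Sigma_1$ itself need not be contained in $\sigma(\bar q)$. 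Neither point affects the conclusion.
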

In general, every surjective algebra homomorphism induces
a Serre fibration. For a proof, see \cite[Theorem~2.4]{PR90}.
The surjectivity of $ \mathcal{P}(p) $ and $ \mathcal{Q}(p) $ follows from
\cite[Proposition~4.1]{AM09}. In fact, $ \mathcal{P}(\prc) $ and 
$ \mathcal{Q}(\prc) $ are locally trivial fiber bundles, as follows 
from \cite[Proposition~1.3]{AM09} or \cite[Theorem~4.2]{Gar08}.
\subsection{Fredholm operators and relative dimension}
\label{sect:fred}
Let $ T\in\mathcal{L}(E,F) $ be a bounded operator. If the image of
$ T $ is a closed subspace, we have two Banach spaces associated with it,
namely $ \ker(T) $ and $ E/\ran(T) = \coker(T) $. 
\begin{defn}
An operator as above is said to be \textsl{semi-Fredholm} if either 
$ \ker(T) $ 
or $ \coker(T) $ is a finite-dimensional space. If both have finite
dimension, $ T $ is called \textsl{Fredholm} and the integer
\[
\ind(T) = \dim\ker(T) - \dim\coker(T)
\]
is the \textsl{Fredholm index}. Otherwise, the index is defined to be 
$ +\infty $ or $ -\infty $ as long as $ \ker(T) $ or $ \coker(T) $ has 
infinite dimension.
\end{defn}
we denote by $ \Fred(E,F) $ and $ \Fred(E) $ the subsets of Fredholm 
operators in $ \mathcal{L}(E,F) $ and $ \mathcal{L}(E) $, respectively; 
$ \Fred_k (E,F) $ is the set of Fredholm operators of index $ k $. 
\begin{prop}
\label{prop:fredholm-properties}
Let $ T\in\Fred(E,F) $, $ S\in\Fred(F,G) $ and 
$ K\in\mathcal{L}_c (E,F) $. We have
\begin{itemize}
\item[(a)] $ \Fred_k (E,F)\subseteq\mathcal{L}(E,F) $ is an open subset;
\item[(b)] $ T + K\in\Fred(E,F) $ and $ \ind(T + K) = \ind(T) $;
\item[(c)] $ S\circ T \in\Fred(E,G) $ and 
$ \ind(S\circ T) = \ind(S) + \ind(T) $;
\item[(d)] given $ B\in\mathcal{L}(E,F) $, there exists $ \var > 0 $ such
that the maps $ \dim\ker(T + \lambda B) $ and $ \dim\coker(T + \lambda B) $
are constant on $ B(0,\var)\setminus\{0\} $;
\item[(e)] $ T\in\Fred(E,F) $ if and only if there exists 
$ U\in\mathcal{L}(F,E) $ such that $ T $ and $ U $ are the
\textsl{essential inverse} of each other, that is,
$ T\circ U - I\in\mathcal{L}_c (F) $
and $ U\circ T - I\in\mathcal{L}_c (E) $.
\end{itemize}
\end{prop}
Statements (a,e) are easy to check. Statements (a,b,d) are all stated and 
proved in \cite[Ch.~IV.5]{Kat95} in the more general setting of semi-Fredholm 
and unbounded operators.
\begin{defn}
A pair of closed subspaces $ (X,Y) $ is \textsl{semi-Fredholm} if and only if 
their sum is closed and either $ X\cap Y $ or $ E/(X + Y) $  has a 
finite dimension. If both have finite dimension, 
then the \textsl{Fredholm index} of the pair $ (X,Y) $ is defined as
\[
\ind(X,Y) = \dim X\cap Y - 
\codim X + Y.
\]
Otherwise, the index is $ +\infty $ or $ -\infty $, when either $ X\cap Y $ or
$ E/(X + Y) $ has infinite dimension.
\end{defn}
Two projectors $ P,Q $ are compact perturbations of each other
if $ P - Q\in\mathcal{L}_c (E) $. In this case, the restriction of
$ Q $ to $ \ran(P) $ is in $ \Fred(\ran(P),\ran(Q)) $. The 
\textsl{relative dimension} between $ P $ and $ Q $ is defined as
\[
[P - Q] := \ind (Q\colon\ran(P)\ra\ran(Q)).
\]
This definition is meant to generalise the dimension gap between
two finite-dimensional spaces to Banach spaces. The notation above is used by 
C.~Zhu and Y.~Long in \cite{ZL99}. Corresponding definitions are known in
Hilbert spaces, considered by A.~Abbondandolo and P.~Majer in 
\cite[Definition~1.1]{AM01} 
(see also \cite[Remark~4.9]{BDF73}). A definition of relative dimension
for pairs of closed subspaces $ (X,Y) $, not necessarily complemented,
can be found in \cite[Definition~5.8]{Gar08}.
\begin{theorem}
\label{thm:relative-dimension}
Given pairs of projectors $ (P,Q) $ and $ (Q,R) $ with compact difference,
we have
\begin{enumerate}
\item if $ \ran(P) $ and $ \ran(Q) $ have finite dimension, then\\
$ [P - Q] = \dim\ran(P) - \dim\ran(Q) $;
\item $ [P - R] = [P - Q] + [Q - R] $;
\item on the subset $ \{(P,Q)\in \mathcal{P}(\mathcal{L}(E))\times 
\mathcal{P}(\mathcal{L}(E)):P - Q\in\mathcal{L}_c (E)\} $, 
the map $ [P - Q] $ is continuous;
\item $ [P - Q] = [(I - Q) - (I - P)] $;
\item $ (\ran(P),\ker(Q)) $ is a Fredholm pair
and $ \ind(\ran(P),\ker(Q)) = [Q - P] $.
\end{enumerate}
\end{theorem}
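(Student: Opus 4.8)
The plan is to reduce each of the five items to the calculus of Fredholm operators and Fredholm pairs recalled in \S\ref{sect:fred}, working throughout with the defining operator $T_{P,Q}:=Q|_{\ran(P)}\colon\ran(P)\to\ran(Q)$, of index $[Q-P]$, together with its ``dual'' $(I-Q)|_{\ker(P)}\colon\ker(P)\to\ker(Q)$. Two algebraic facts, both consequences of $P-Q\in\mathcal{L}_c(E)$, will be reused: (a) for $x\in\ran(P)$ one has $(I-Q)x=(I-P)x+(P-Q)x=(P-Q)x$, so $(I-Q)|_{\ran(P)}$ is compact, and the same identity holds with any other projector in place of $P$; and (b) the operator $S:=QP+(I-Q)(I-P)$ satisfies $S-I=-\big((I-Q)(P-Q)-Q(P-Q)\big)$, so $S-I$ is compact, whence $S\in\Fred(E)$ with $\ind S=0$; moreover, with respect to $E=\ran(P)\oplus\ker(P)=\ran(Q)\oplus\ker(Q)$ the operator $S$ is block diagonal with diagonal blocks $T_{P,Q}$ and $(I-Q)|_{\ker(P)}$. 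Item (i) needs only rank--nullity: when $\ran(P),\ran(Q)$ are finite dimensional, $\ind T_{P,Q}=\dim\ran(P)-\dim\ran(Q)$.

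For (ii), first note $P-R=(P-Q)+(Q-R)$ is compact, so $[P-R]$ is defined. The composite $P|_{\ran(Q)}\circ Q|_{\ran(R)}=(PQ)|_{\ran(R)}\colon\ran(R)\to\ran(P)$ has index $[P-Q]+[Q-R]$ by the composition rule for the Fredholm index; by fact (a) with $R$ in place of $P$, it differs from the defining operator $P|_{\ran(R)}$ of $[P-R]$ by the compact operator $P(Q-I)|_{\ran(R)}$, hence has the same index, namely $[P-R]$. Taking $R=P$ and using $[X-X]=\ind(\mathrm{id}_{\ran(X)})=0$ gives the antisymmetry $[Y-X]=-[X-Y]$.

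The crux is (iii): a priori $[Q-P]$ is the index of an operator whose domain $\ran(P)$ and codomain $\ran(Q)$ move with $(P,Q)$, so openness of $\Fred_k$ cannot be invoked directly. I would fix a base point $(P_0,Q_0)$ of compact difference and, for $(P,Q)$ in a small neighbourhood, transport the defining operator to the fixed space $\mathcal{L}(\ran(P_0),\ran(Q_0))$ using the invertibles $u(P):=PP_0+(I-P)(I-P_0)$ and $w(Q):=QQ_0+(I-Q)(I-Q_0)$ supplied by the proof of Theorem~\ref{thm:pair-of-projectors}: each is a polynomial in the corresponding pair of projectors, hence continuous in $(P,Q)$; each equals $I$ at the base point, hence is invertible nearby; and $u(P)P_0u(P)^{-1}=P$, $w(Q)Q_0w(Q)^{-1}=Q$, so $u(P)\ran(P_0)=\ran(P)$ and $w(Q)\ran(Q_0)=\ran(Q)$. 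Then $\Theta(P,Q):=w(Q)^{-1}Qu(P)$ restricts to a bounded operator $\ran(P_0)\to\ran(Q_0)$ depending continuously on $(P,Q)$ as an element of that fixed Banach space, having the same index as $T_{P,Q}$ (it is $T_{P,Q}$ pre- and post-composed with isomorphisms), and equal to $T_{P_0,Q_0}$ at the base point. Since $T_{P_0,Q_0}$ is Fredholm, openness of $\Fred(\ran(P_0),\ran(Q_0))$ and local constancy of the index force $\ind\Theta(P,Q)=\ind T_{P_0,Q_0}$ near $(P_0,Q_0)$; hence $[Q-P]$ is locally constant, therefore continuous. This transport to a fixed fibre is the main obstacle; everything else is formal.

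For (iv), fact (b) gives $0=\ind S=\ind T_{P,Q}+\ind\big((I-Q)|_{\ker(P)}\big)=[Q-P]+[(I-Q)-(I-P)]$ (both blocks are Fredholm because the relevant differences, $P-Q$ and $Q-P$, are compact); combining with the antisymmetry from (ii) yields $[(I-Q)-(I-P)]=-[Q-P]=[P-Q]$. Finally for (v), write $T=T_{P,Q}$: then $\ker T=\ran(P)\cap\ker(Q)$, and $\ran T=Q(\ran(P))$ is closed because $T$ is Fredholm, so $\ran(P)+\ker(Q)=Q^{-1}\big(Q(\ran(P))\big)$ is closed; restricting the quotient map $E\to E/(\ran(P)+\ker(Q))$ to $\ran(Q)$ gives a surjection (since $E=\ran(Q)+\ker(Q)$ and $\ker(Q)$ is annihilated) with kernel $\ran(Q)\cap(\ran(P)+\ker(Q))=Q(\ran(P))$, identifying $E/(\ran(P)+\ker(Q))$ with $\coker T$. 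Hence $(\ran(P),\ker(Q))$ is a Fredholm pair and $\ind(\ran(P),\ker(Q))=\dim\ker T-\dim\coker T=\ind T=[Q-P]$.
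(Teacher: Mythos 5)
Your proof is mathematically sound and, unlike the paper's, self-contained: the paper disposes of this theorem entirely by citation (Kato for (iii), the author's preprint and Zhu--Long for (i), (iv), (v); nothing at all is offered for (ii)). The key non-formal step is your treatment of (iii): by conjugating the defining operator with the explicit invertibles $u(P)=PP_0+(I-P)(I-P_0)$ and $w(Q)=QQ_0+(I-Q)(I-Q_0)$ you transport it to the fixed Banach space $\mathcal{L}(\ran(P_0),\ran(Q_0))$, reducing the moving-domain continuity problem to the openness of the set of Fredholm operators between two fixed spaces; this is more elementary than the cited route through Kato's stability theory for semi-Fredholm pairs. The block-diagonal operator $S=QP+(I-Q)(I-P)$ used for (iv) is likewise a clean replacement for a citation, and the quotient identification in (v) is exactly what is needed. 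One sign discrepancy is worth flagging, though it is the paper's, not yours: you take $[Q-P]=\ind\bigl(Q|_{\ran(P)}\colon\ran(P)\to\ran(Q)\bigr)$, which is the reading that makes items (i) and (v) true as stated (the finite-dimensional rank--nullity count and the Fredholm-pair identity both force this sign); the paper's displayed definition in the preliminaries assigns that same Fredholm index to $[P-Q]$, the opposite sign, and therefore contradicts its own items (i) and (v). Your convention is the internally consistent one, and under it every step of the argument checks out.
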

Property (iii) follows from stability results for the index of semi-Fredholm 
pairs; see \cite[Remark~IV.4.31]{Kat95} and \cite[Theorem~3.3]{Gar08}. 
For a proof of (iv) and (v), see \cite[Lemma~2.3]{ZL99} and 
\cite[Proposition~5.13]{Gar08}, respectively;
(i) follows from the remarks after Definition~5.8 in \cite{Gar08}.
\section{Essentially hyperbolic operators}
We recall that a bounded operator $ A\in\mathcal{L}(E) $ - 
or more generally an element of a Banach algebra $ \mathcal{A} $ - is called 
\textsl{hyperbolic} if its spectrum does not meet the imaginary axis. 
We denote by $ GL(E) $ the group of invertible operators on $ E $
and by $ GL_I (E) $ the connected component of the identity operator.
\vskip .2em
Given $ A\in\mathcal{L}(E) $, the spectrum of $ A + \mathcal{L}_c (E) $ is 
called the \textsl{essential} spectrum. It is usually denoted by
$ \sigma_e (A) $. By (e) of Proposition~\ref{prop:fredholm-properties},
\begin{equation}
\label{eq:essential-spectrum}
\sigma_e (A) = \set{\lambda}{A - \lambda\not\in\Fred(E)}.
\end{equation}
\begin{defn}
\label{defn:essentially-hyperbolic}
An operator $ A $ is called \textsl{essentially hyperbolic} if 
$ A + \mathcal{L}_c (E) $ is a hyperbolic element in $ \mathcal{C}(E) $.
\end{defn}
By the equality above, an operator $ A \in\mathcal{L}(E) $ is essentially
hyperbolic if and only if its essential spectrum does not meet the imaginary
axis. A consequence of (\ref{eq:essential-spectrum}) is:
\begin{lemma}
\label{lem:structure}
Let $ D(A) $ be the set of all isolated points of 
$ \sigma(A) $, and let $ \partial\sigma(A) $ be the set of 
the boundary points of $ \sigma(A) $. Then 
$ \partial \sigma(A)\setminus D(A) $ is a subset of $ \sigma_e(A) $.
\end{lemma}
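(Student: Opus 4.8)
The plan is to prove the contrapositive inclusion $ \partial\sigma(A)\setminus\sigma_e(A)\subseteq D $; in fact this needs no essential hyperbolicity and holds for every $ A\in\mathcal{L}(E) $. I would work in the complexification throughout, so that $ \sigma(A)=\sigma(A_{\C}) $ and $ \sigma_e(A)=\set{\lambda\in\C}{A_{\C}-\lambda\not\in\Fred(E_{\C})} $, and all perturbation parameters below range over a complex disc.

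First I would fix $ \lambda\in\partial\sigma(A)\setminus\sigma_e(A) $ and set $ T=A_{\C}-\lambda $, which is Fredholm by definition of $ \sigma_e $. Using openness of the set of Fredholm operators (property~(a) of \S\ref{sect:fred}) I get $ \var_0>0 $ with $ A_{\C}-(\lambda+\mu)\in\Fred(E_{\C}) $ for $ |\mu|<\var_0 $. Then I would invoke the local stability of $ \dim\ker $ and $ \dim\coker $ (property~(d) of \S\ref{sect:fred}) with reference operator $ T $ and perturbation $ B=-I $: this yields $ \var_1\in(0,\var_0] $ such that $ \mu\mapsto\dim\ker(A_{\C}-(\lambda+\mu)) $ and $ \mu\mapsto\dim\coker(A_{\C}-(\lambda+\mu)) $ are both constant on the punctured disc $ 0<|\mu|<\var_1 $.

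Next, since $ \lambda $ is a boundary point of $ \sigma(A) $, the resolvent set of $ A $ meets every punctured neighbourhood of $ \lambda $, so I can choose $ \mu_0 $ with $ 0<|\mu_0|<\var_1 $ and $ \lambda+\mu_0\notin\sigma(A) $; at that value both $ \dim\ker $ and $ \dim\coker $ vanish, hence by the constancy they vanish on the whole punctured disc. Therefore $ A_{\C}-(\lambda+\mu) $ is Fredholm with trivial kernel and trivial cokernel — hence invertible — for every $ 0<|\mu|<\var_1 $, which says exactly that $ \lambda $ is an isolated point of $ \sigma(A) $, i.e.\ $ \lambda\in D $. Rearranging gives $ \partial\sigma(A)\setminus D\subseteq\sigma_e(A) $, as claimed.

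The argument is short, so I do not expect a serious obstacle; the one point to be careful about is that property~(d) genuinely applies here — it needs the \emph{reference} operator $ A_{\C}-\lambda $ to be Fredholm, which is precisely the hypothesis $ \lambda\notin\sigma_e(A) $ — and that one must combine (d) with openness of $ \Fred(E_{\C}) $ to upgrade ``trivial kernel and cokernel'' to ``invertible'' via the open mapping theorem. One should also be consistent in passing to the complexification when interpreting both $ \sigma $ and $ \sigma_e $.
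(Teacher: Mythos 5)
Your proof is correct and is essentially the paper's own argument, merely stated as a direct contrapositive rather than by contradiction: both fix a boundary point off $\sigma_e(A)$, invoke property~(d) of \S\ref{sect:fred} with $B=\pm I$ to make $\dim\ker$ and $\dim\coker$ constant on a punctured disc, use the boundary hypothesis to find a nearby resolvent point forcing those constants to be zero, and conclude the point is isolated. The extra care you take about the complexification and about upgrading ``trivial kernel and cokernel'' to ``invertible'' is the same content the paper leaves implicit.
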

\begin{proof}
Let $ \lambda\in\p\sigma(A)\setminus D(A) $, and suppose that 
$ \lambda\not\in\sigma_e (A) $, thus $ A - \lambda\in\Fred(E) $. Let
$ \var > 0 $ as in (d) of Proposition~\ref{prop:fredholm-properties}, 
with $ B = -I $. Therefore, for some $ c,k\in\mb{Z} $
\[
\dim\ker(A - z) = c,\ \dim\coker(A - z) = k,\ \text{ for every }
z\in B(\lambda,\var)\setminus\{\lambda\}.
\]
Because $ \lambda\in\p\sigma(A) $, there exists 
$ w\in B(\lambda,\var)\setminus\{\lambda\} $ such that $ A - w $ is invertible.
Thus, $ c = k = 0 $ and $ A - z $ is invertible for every 
$ z\in B(\lambda,\var)\setminus\{\lambda\} $. Hence $ \lambda $ is isolated 
in $ \sigma(A) $.
\end{proof}
We need a well-known fact about the topology of the real line:
\begin{prop}
A closed proper subset of the real line with an empty boundary is discrete.
\end{prop}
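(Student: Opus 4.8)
The plan is to show that the hypotheses force $S$, our closed proper subset, to have empty interior; once $\mathrm{int}(S)=\emptyset$ one has $S=\partial S$, which is discrete by assumption, so the proposition follows. I would therefore argue by contradiction: suppose $U:=\mathrm{int}(S)$ is non-empty (it is, of course, open).

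The first step is to check that $\partial U\subseteq\partial S$. If $x\in\partial U=\overline U\setminus U$, then $x\in\overline U\subseteq\overline S=S$ while $x\notin U=\mathrm{int}(S)$, hence $x\in S\setminus\mathrm{int}(S)=\partial S$. Consequently $\partial U$ is a (closed) subset of the discrete set $\partial S$, so $\partial U$ is itself discrete. Next I would use that $U$ is a non-empty open \emph{proper} subset of $\mathbb{C}$, properness coming from $\overline U\subseteq S\neq\mathbb{C}$; then $\mathbb{C}\setminus\overline U$ is non-empty and open as well, and
\[
\mathbb{C}\setminus\partial U = U\cup(\mathbb{C}\setminus\overline U)
\]
displays $\mathbb{C}\setminus\partial U$ as a disjoint union of two non-empty open sets, so $\mathbb{C}\setminus\partial U$ is disconnected. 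Finally I would invoke the fact that deleting a discrete closed subset of the plane — equivalently, any countable subset, since a discrete subspace of the separable metric space $\mathbb{C}$ is countable — leaves a connected set; applied to $\partial U$ this contradicts the previous assertion, so $\mathrm{int}(S)$ must be empty.

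The genuinely substantive ingredient, and the step I would expect to need the most care, is that $\mathbb{C}$ minus a countable set is (path-)connected: for any two points, among the uncountably many two-segment polygonal paths joining them through a variable third point, which pairwise meet only at the two endpoints, all but countably many avoid the countable set. Everything else is bookkeeping, the one subtle point being the inclusion $\partial U\subseteq\partial S$ that lets discreteness of the boundary pass to $\partial U$. It is exactly this connectedness fact that makes the hypothesis ``proper'' necessary: for $S=\mathbb{C}$ the boundary is empty, hence discrete, yet $S$ itself is not discrete, so the hypothesis cannot be dropped.
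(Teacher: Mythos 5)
The paper offers no proof of this proposition; it is invoked as a ``well-known fact about the topology of the complex plane'' and used immediately, so there is no argument in the text to compare yours against. Your proof is correct, and it is essentially the standard one: reduce to showing $\mathrm{int}(S)=\emptyset$, observe that $\partial U\subseteq\partial S$ so $\partial U$ inherits discreteness (hence countability, by separability of $\C$), note that a non-empty proper open $U$ has non-empty boundary that separates $\C\setminus\partial U$ into the two non-empty open pieces $U$ and $\C\setminus\overline U$, and contradict the fact that the plane minus a countable set is connected. The observation that properness is exactly what rules out $S=\C$ (empty, hence discrete, boundary) is a good sanity check.

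One small imprecision, in the step you yourself flagged as delicate: it is not true that \emph{all} two-segment paths $a\to c\to b$, as $c$ ranges over $\C\setminus\{a,b\}$, meet pairwise only at $a$ and $b$ (take $c_2\in(a,c_1)$, for instance). To make the counting argument airtight you should restrict $c$ to an uncountable family for which the pairwise-disjointness is clear, e.g.\ $c$ on the perpendicular bisector $L$ of $[a,b]$: then $[a,c_1]\cap[a,c_2]=\{a\}$ and $[c_1,b]\cap[c_2,b]=\{b\}$ because $a,b\notin L$, and $[a,c_i]\cap[c_j,b]=\emptyset$ for $i\neq j$ because the two segments lie on opposite sides of $L$ except for their endpoints $c_i,c_j$ on $L$. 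With that restriction each point of the countable set kills at most one path, and the argument closes as you intended.
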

\begin{cor}
If $ A $ is an essentially hyperbolic operator, the
set $ \sigma(A)\cap i\R $ is finite.
\end{cor}
\begin{proof}
We show that the boundary of $ \sigma(A)\cap i\R $ is empty. 
Suppose it is not and let $ \lambda\in\partial(\sigma(A)\cap i\R) $ be an
arbitrary point. Hence, $ \lambda\in\partial\sigma(A) $.
Because $ \sigma(A)\cap i\R $ is closed, $ \lambda\in i\R $. Hence
$ \lambda\not\in\sigma_e (A) $, because $ A $ is essentially hyperbolic. Thus, 
$ \lambda\in\partial\sigma(A)\setminus\sigma_e (A) $, whence, by 
Lemma~\ref{lem:structure}, $ \lambda\in D(A) $. Hence, $ \lambda $ is 
isolated in $ \sigma(A)\cap i\R $
in contradiction with the hypothesis that $ \lambda $ is a boundary point.
By the proposition above, $ \sigma(A)\cap i\R $ is discrete. Because it is 
also compact, it is a finite set.
\end{proof}
\begin{prop}
If $ A $ is an essentially hyperbolic operator, each of the points of 
$ \sigma(A)\cap\J $ is an eigenvalue of finite algebraic multiplicity.
\end{prop}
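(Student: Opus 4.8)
The plan is to localize the whole question at the Riesz (spectral) subspace attached to $ \lambda $ and then conclude by elementary finite-dimensional linear algebra. Fix $ \lambda\in\sigma(A)\cap\J $. We already know that $ \lambda $ is an isolated point of $ \sigma(A) $, so $ \{\lambda\} $ is open and closed in $ \sigma(A) $ and Theorem~\ref{thm:spectrum-projector} supplies the spectral projector $ P := P(A;\{\lambda\}) $, commuting with $ A $, together with its complement $ I - P = P(A;\sigma(A)\setminus\{\lambda\}) $. Set $ E_1 := \ran(P) $ and let $ A_1 $ be the operator induced by $ A $ on $ E_1 $. Under the natural identification $ \mathcal{A}(P)\cong\mathcal{L}(E_1) $, part (i) of Theorem~\ref{thm:spectrum-projector} gives $ \sigma(A_1) = \sigma_P(PAP) = \{\lambda\} $, and similarly the operator induced by $ A $ on $ \ran(I - P) $ has spectrum $ \sigma(A)\setminus\{\lambda\} $, so that $ A - \lambda $ is invertible there. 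Hence it suffices to prove that $ E_1 $ is non-zero and finite dimensional: then $ A_1 - \lambda $ is a nilpotent operator on a non-zero finite-dimensional space, so $ \ker(A_1 - \lambda)\neq\{0\} $ exhibits an eigenvector of $ A $ for $ \lambda $, while $ \ran(P)=E_1 $ is the generalized eigenspace of $ A $ at $ \lambda $, of finite dimension; that is exactly ``eigenvalue of finite algebraic multiplicity''.

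Next I would show that $ A_1 - \lambda $ is Fredholm and quasinilpotent, and deduce finiteness of $ \dim E_1 $. Quasinilpotency is immediate: $ \sigma(A_1 - \lambda) = \{0\} $ by the computation above. For Fredholmness, $ \lambda\notin\sigma_e(A) $ makes $ A - \lambda $ Fredholm, and with respect to the splitting $ E = E_1\oplus\ran(I - P) $ the operator $ A - \lambda $ is block diagonal with invertible second block, so its kernel and cokernel coincide (in particular in dimension) with those of $ A_1 - \lambda $; hence $ A_1 - \lambda $ is Fredholm on $ E_1 $. Now apply Proposition~\ref{prop:spectrum-basics}: part (v), applied to $ \prc\colon\mathcal{L}(E_1)\ra\mathcal{C}(E_1) $, gives $ \sigma_e(A_1 - \lambda)\subseteq\sigma(A_1 - \lambda) = \{0\} $, whereas Fredholmness says $ 0\notin\sigma_e(A_1 - \lambda) $; so the essential spectrum of $ A_1 - \lambda $ is empty. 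If $ E_1 $ were infinite dimensional, $ \mathcal{C}(E_1) $ would be a complex Banach algebra with non-zero unit, and part (iv) of Proposition~\ref{prop:spectrum-basics} would force every element, in particular $ A_1 - \lambda + \mathcal{L}_c(E_1) $, to have non-empty spectrum — a contradiction. Hence $ E_1 $ is finite dimensional, and it is non-zero since $ \lambda\in\sigma(A_1) $; combined with the first paragraph this finishes the proof.

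The step I expect to be the real obstacle is the implication ``Fredholm operator with spectrum $ \{0\}\ \Rightarrow\ $ finite-dimensional domain''; everything else is bookkeeping once the earlier discreteness results are in place. The content of that step is precisely the non-emptiness of the essential spectrum in infinite dimensions, i.e. the non-triviality of the Calkin algebra, which is what rules out an infinite-dimensional Riesz subspace for an isolated point lying off $ \sigma_e(A) $. One should also keep track of real versus complex scalars, but this is harmless: as everywhere in the paper, all spectra are read in the complexification, and the argument above runs there verbatim; a purely imaginary eigenvalue of the complexification together with its conjugate corresponds, for the real operator $ A $, to an invariant subspace of the same finite dimension.
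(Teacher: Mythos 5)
Your argument is correct, but it follows a genuinely different route from the paper's. The paper establishes first that $ A - \lambda\in\Fred_0 (E) $ by using the stability of nullity and deficiency (item d) of \S\ref{sect:fred}) and the fact that $ A - z' $ is invertible for some $ z' $ near $ \lambda $; then it observes that $ \lambda $ is therefore an eigenvalue and, citing Kato's Theorems~5.10 and~5.28, that an isolated eigenvalue of a Fredholm operator has a finite-rank Riesz projection. You instead localize at once to $ E_1 = \ran P(A;\{\lambda\}) $, observe that the restricted operator $ A_1 - \lambda $ is Fredholm (by the block-diagonal decomposition and invertibility of the complementary block) with spectrum $ \{0\} $, and then derive the contradiction $ \sigma_e (A_1 - \lambda) = \emptyset $ from (v) of Proposition~\ref{prop:spectrum-basics} plus Fredholmness, which is impossible in a nonzero complex Calkin algebra by (iv). This replaces the external appeal to Kato with the single soft fact that every element of a nonzero complex Banach algebra has nonempty spectrum, so it is more self-contained; the cost is the slight bookkeeping of passing to the subalgebra $ \mathcal{A}(P)\cong\mathcal{L}(E_1) $ and to $ \mathcal{C}(E_1) $. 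Your closing remarks about working in the complexification (so that the spectral projector $ P(A;\{\lambda\}) $ for a nonreal $ \lambda\in\J $ lives over $ E_{\C} $) and about a quasinilpotent operator on a finite-dimensional space having nontrivial kernel are exactly the right points to make explicit, and they do not affect the validity of the argument.
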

\begin{proof}
Let $ \lambda\in\sigma(A)\cap\J $. 
We infer that $ A - \lambda\in\Fred_0 (E) $. By (\ref{eq:essential-spectrum}),
$ A - \lambda\in\Fred_k (E) $ for some
$ k\in\mathbb{Z} $. Now, by (a) of Proposition~\ref{prop:fredholm-properties}, 
there exists a neighbourhood $ V $ of $ \lambda $ such that
\[
A - z\in\Fred_k (E),\ \ z\in V.\\
\]
Because $ \lambda $ is isolated, there 
exists $ z'\in V\setminus\{\lambda\} $
such that $ A - z' $ is invertible, hence $ k = 0 $.
Thus, because $ A - \lambda $ is not invertible, 
$ \ker(A - \lambda)\neq\{0\} $.
Hence $ \lambda $ is an eigenvalue and, by hypothesis, isolated.
These two conditions, by Theorem~5.10 and Theorem~5.28 of \cite{Kat95}, imply 
that the spectral projector $ P(A;\{\lambda\}) $ has range of
finite dimension, which is the algebraic multiplicity.
\end{proof}
Theorem \ref{thm:spectrum-projector} provides us with projectors
$ P_i = P(A;\{\lambda_i\}) $ for every $ \lambda_i\in\sigma(A)\cap\J $.
Let $ P = P(A;\sigma(A)\cap\{\re(z)\neq 0\}) $. 
We can write
\begin{equation}
\label{eq:structure}
A = \left(A P + \sum_{i = 1} \sp n P_i\right)  + (A - I)\sum_{i = 1}^n P_i.
\end{equation}
According to (i,ii) of Theorem \ref{thm:spectrum-projector},  
the term in the brackets is hyperbolic. The last term has finite rank.
Thus, we have proved that an essentially hyperbolic operator is a compact
perturbation of a hyperbolic one. Conversely, a compact perturbation of a 
hyperbolic operator is essentially hyperbolic. In fact, let
$ H,K $ be a hyperbolic and a compact operator, respectively:
By (b) of Proposition~\ref{prop:fredholm-properties} and 
(\ref{eq:essential-spectrum}),
$ \sigma_e (H + K) = \sigma_e (H)\subseteq\sigma(H) $. Because $ H $ is 
hyperbolic, $ \sigma(H) $ does not meet the imaginary axis, so neither does
$ \sigma_e (H) $. Therefore, $ H + K $ is essentially hyperbolic.
Thus, by (\ref{eq:structure}) and the remarks after it, we have proved the 
following 
\begin{theorem}
\label{thm:A+K}
An operator is essentially hyperbolic if and only if it is a compact 
perturbation of a hyperbolic operator.
\end{theorem}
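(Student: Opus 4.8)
The plan is to prove the two implications separately; the ingredients have all been assembled in the discussion preceding the statement, so the work is mainly in fitting them together. \emph{From essentially hyperbolic to a compact perturbation of a hyperbolic operator:} assume $\sigma_e(A)\cap\J=\emptyset$. First I would combine Lemma~\ref{lem:structure}, the proposition on closed subsets of $\C$ with discrete boundary, and part (i) of Proposition~\ref{prop:spectrum-basics} to see that $\sigma(A)\cap\J=\{\lambda_1,\dots,\lambda_n\}$ is finite, each $\lambda_i$ being isolated in $\sigma(A)$ and, by the argument recalled above, an eigenvalue whose spectral projector $P_i:=P(A;\lambda_i)$ has finite-dimensional range. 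Since $\Sigma_0:=\sigma(A)\cap\{\re z\neq0\}$ and the singletons $\{\lambda_i\}$ partition $\sigma(A)$ into subsets open and closed in $\sigma(A)$, Theorem~\ref{thm:spectrum-projector} yields spectral projectors $P:=P(A;\Sigma_0)$ and $P_i$ that commute with $A$, commute pairwise, and satisfy $P+\sum_{i=1}^n P_i=I$, which produces the decomposition \eqref{eq:structure}. I would then check that $AP+\sum_{i=1}^n P_i$ is hyperbolic: applying \eqref{eq:sp} to this commuting family, its spectrum is the spectrum of $A$ restricted to $\ran P$, namely $\Sigma_0$, together with the spectrum of the identity on $\ran(\sum_i P_i)=\bigoplus_i\ran P_i$, namely $\{1\}$ (or $\emptyset$ if that space is trivial); neither meets $\J$. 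The remaining summand $(A-I)\sum_{i=1}^n P_i$ takes values in the finite-dimensional space $\bigoplus_i\ran P_i$, hence has finite rank and is in particular compact.

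\emph{From a compact perturbation of a hyperbolic operator to essentially hyperbolic:} write $A=H+K$ with $H$ hyperbolic and $K$ compact. By property b) of \S\ref{sect:fred} the Fredholm property is insensitive to the compact perturbation $K$, so $A-\lambda\in\Fred(E)$ exactly when $H-\lambda\in\Fred(E)$, whence $\sigma_e(A)=\sigma_e(H)$. Since $\sigma_e(H)\subseteq\sigma(H)$ and $\sigma(H)\cap\J=\emptyset$ by hyperbolicity, we get $\sigma_e(A)\cap\J=\emptyset$, i.e.\ $A$ is essentially hyperbolic.

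The only point requiring care is the hyperbolicity of the bracketed operator in \eqref{eq:structure}: it depends on the preliminary structural fact that $\sigma(A)\cap\J$ is finite with finite-dimensional spectral projectors (the content of Lemma~\ref{lem:structure} and the two propositions above) and then on a routine block-spectrum computation through the commuting spectral projectors via \eqref{eq:sp}. The converse is purely a consequence of the stability of $\Fred(E)$ under compact perturbations and presents no obstacle.
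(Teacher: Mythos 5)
Your proof is correct and follows essentially the same route as the paper: the forward implication rests on the decomposition \eqref{eq:structure} obtained from the spectral projectors $P(A;\sigma(A)\cap\{\re z\neq 0\})$ and $P(A;\lambda_i)$, with hyperbolicity of the bracketed term read off from the block-spectrum formula \eqref{eq:sp}, and the converse uses invariance of the essential spectrum under compact perturbation. The only difference is that you spell out the \eqref{eq:sp} computation that the paper compresses into a citation of Theorem~\ref{thm:spectrum-projector}(i,ii).
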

we denote by $ e\mathcal{H}(E) $ the set of essentially hyperbolic
operators endowed with the topology induced by the operator norm. 
\begin{prop}
\label{prop:product}
$ e\mathcal{H}(E) $ is an open subset of $ \mathcal{L}(E) $,
and homeomorphic to the product 
$ \mathcal{H}(\mathcal{C}(E))\times\mathcal{L}_c (E) $.
\end{prop}
\begin{proof}
By Definition~\ref{defn:essentially-hyperbolic}, 
\begin{equation}
\label{eq:eHE-open}
e\mathcal{H}(E) = p\sp{-1} (\mathcal{H}(\mathcal{C})),
\end{equation}
where $ p $ is the quotient projection defined in (\ref{eq:quotient}).
Because the right term is an open subset of $ \mathcal{L}(E) $, so is
$ e\mathcal{H}(E) $. Because $ p $ is linear, continuous and surjective, 
there exists 
$ s\colon\mathcal{C}(E)\rightarrow\mathcal{L}(E) $ continuous such that
$ p\circ s  = id $. This follows from \cite[Proposition~A.1]{AM09}.
We define the continuous maps
\begin{align*}
f:e\mathcal{H}(E)\rightarrow\mathcal{H}(\mathcal{C}(E))\times\mathcal{L}_c (E),
\ \ \ 
&A\mapsto (\prc(A), A - s(\prc(A))), \\
g:\mathcal{H}(\mathcal{C}(E))\times\mathcal{L}_c (E)\rightarrow 
e\mathcal{H}(E), \ \ \ 
&(x,K)\mapsto s(x) + K.
\end{align*}
Both are well defined. In fact, by (\ref{eq:eHE-open}), 
$ \prc(A)\in\mathcal{H}(\mathcal{C}(E)) $. By the property of $ s $, 
\[
\prc(A - s(\prc(A))) = 0,
\]
thus the $ \mathcal{L}(E) $ component of $ f $ is compact, because 
$ \mathcal{L}_c (E) = \ker(\prc) $. As for $ g $,
because $ p(s(x)) = x\in\mathcal{H}(\mathcal{C}(E)) $, by (\ref{eq:eHE-open}),
$ s(x)\in e\mathcal{H}(E) $, hence
\[
\sigma_e (s(x))\cap\J = \emptyset.
\]
Because $ \sigma_e (s(x) + K) = \sigma_e (s(x)) $, 
$ s(x) + K\in e\mathcal{H}(E) $. We conclude the proof by checking that 
$ f $ and $ g $ are the inverses of each other:
\begin{gather*}
f\circ g(x,K) = f(s(x) + K) = (\prc(s(x) + K), s(x) + K - s(\prc(s(x) + K))) 
= (x,K)\\
g\circ f(A) = s(p(A)) + A - s(p(A)) = A.
\end{gather*}
\end{proof}
\begin{defn}
Let $ x\in\mathcal{A} $ be such that 
$ \sigma\sp + (x) = \sigma(x)\cap\{\re(z) > 0\} $ is open and closed in 
$ \sigma(x) $. We denote by $ p^+ (x) $ the projector 
$ P(x;\{\re(z) > 0\}) $. Similarly, we define $ p\sp - (x) $.
\end{defn}
\begin{prop}
\label{prop:H-P}
The map 
$ p\sp + \colon\mathcal{H}(\mathcal{A})\rightarrow\mathcal{P}(\mathcal{A})$ 
defines a homotopy equivalence, a homotopy inverse being the map
$ j\colon\mathcal{P}(\mathcal{A})\rightarrow\mathcal{H}(\mathcal{A}) $, 
$ j(p) = 2p - 1 $.
\end{prop}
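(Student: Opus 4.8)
The plan is to verify that $j(p)=2p-1$ is a two-sided homotopy inverse of $p^+$; in fact I will show the sharper statements that $p^+\circ j=\mathrm{id}_{\mathcal P(\mathcal A)}$ identically and that $j\circ p^+$ is homotopic to $\mathrm{id}_{\mathcal H(\mathcal A)}$ through the straight-line homotopy. First note that $j$ is continuous and maps into $\mathcal H(\mathcal A)$, since $p^2=p$ gives $\sigma(2p-1)\subseteq\{-1,1\}$, which avoids $\J$; and that $p^+$ is defined on all of $\mathcal H(\mathcal A)$, because for $x$ with $\sigma(x)\cap\J=\emptyset$ the compact set $\sigma(x)$ splits as the disjoint union of the relatively open-closed pieces $\sigma^+(x)=\sigma(x)\cap\{\re z>0\}$ and $\sigma^-(x)=\sigma(x)\cap\{\re z<0\}$, so $p^+(x)=P(x;\sigma^+(x))$ exists by Theorem~\ref{thm:spectrum-projector} (with the convention $P(x;\emptyset)=0$ when $\sigma^+(x)=\emptyset$).

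The first task is continuity of $p^+$. Fixing $x_0$, choose a simple family of closed curves $\Gamma$ lying in the open right half-plane with $\sigma^+(x_0)\subset\Omega_1$; since $\Omega_1\cup\{\re z<0\}$ is an open neighbourhood of $\sigma(x_0)$ not meeting $\Gamma$, part~(iii) of Proposition~\ref{prop:spectrum-basics} provides a neighbourhood of $x_0$ on which $\sigma(x)\cap\Gamma=\emptyset$ and $\sigma^+(x)\subset\Omega_1$, hence on which $p^+=P_\Gamma$; the latter is continuous by Theorem~\ref{thm:spectrum-projector}. The identity $p^+\circ j=\mathrm{id}$ is then a direct check: for a projector $p$, the element $2p-1$ commutes with $p$ and satisfies $p(2p-1)p=p$ and $(1-p)(2p-1)(1-p)=-(1-p)$, so by \eqref{eq:sp} and the characterising property~(i) of Theorem~\ref{thm:spectrum-projector} the spectral projector of $2p-1$ relative to $\{\re z>0\}$ is $p$ itself, i.e.\ $p^+(2p-1)=p$.

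For the other composition I would introduce $H\colon\mathcal H(\mathcal A)\times[0,1]\to\mathcal A$, $H(x,t)=(1-t)x+t\bigl(2p^+(x)-1\bigr)$, continuous by the previous step, with $H(\cdot,0)=\mathrm{id}$ and $H(\cdot,1)=j\circ p^+$; the crux is that $H(x,t)$ remains hyperbolic. Put $p=p^+(x)$. Since $p$ commutes with $x$ and with $2p-1$, it commutes with $H(x,t)$, and $p(2p-1)p=p$, $(1-p)(2p-1)(1-p)=-(1-p)$ give the block form
\[
p\,H(x,t)\,p=(1-t)\,pxp+t\,p,\qquad (1-p)H(x,t)(1-p)=(1-t)(1-p)x(1-p)-t(1-p).
\]
By \eqref{eq:sp} applied to $x$ we have $\sigma_p(pxp)=\sigma^+(x)$ and $\sigma_{1-p}((1-p)x(1-p))=\sigma^-(x)$; applying part~(ii) of Proposition~\ref{prop:spectrum-basics} inside the subalgebras $\mathcal A(p)$ and $\mathcal A(1-p)$, and then \eqref{eq:sp} to $H(x,t)$, gives
\[
\sigma\bigl(H(x,t)\bigr)=\bigl((1-t)\sigma^+(x)+t\bigr)\cup\bigl((1-t)\sigma^-(x)-t\bigr).
\]
Each $\lambda\in\sigma^+(x)$ has $\re\lambda>0$, whence $\re\bigl((1-t)\lambda+t\bigr)=(1-t)\re\lambda+t>0$ for every $t\in[0,1]$, and symmetrically the second set lies in $\{\re z<0\}$; so $H(x,t)\in\mathcal H(\mathcal A)$ throughout, and $j\circ p^+\simeq\mathrm{id}_{\mathcal H(\mathcal A)}$.

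The substantive step is the hyperbolicity of the linear path, which works precisely because the decomposition with respect to the spectral projector $p^+(x)$ turns $H(x,t)$ into a block-diagonal element to which \eqref{eq:sp} and the affine invariance of the spectrum apply blockwise. The remaining care is purely bookkeeping: in the degenerate cases $p^+(x)\in\{0,1\}$ one of the subalgebras is trivial and the corresponding block and spectral component drop out, so the real-part estimates still go through; and the contour localisation used for continuity of $p^+$ is standard.
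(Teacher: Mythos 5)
Your proof is correct and follows essentially the same strategy as the paper: you show $p^+\circ j=\mathrm{id}$ exactly and contract $j\circ p^+$ to the identity via the same affine homotopy $H(x,t)=(1-t)x+t(2p^+(x)-1)$, with the same block-wise spectrum computation \eqref{spectrum} and the same convexity observation. The continuity argument for $p^+$ uses a cleaner localising neighbourhood ($\Omega_1\cup\{\re z<0\}$) than the paper's rectangle, but the mechanism — upper semicontinuity of the spectrum plus continuity of the Riesz integral $P_\Gamma$ — is identical.

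The one genuinely different piece is the step $p^+\circ j=\mathrm{id}$. The paper computes the resolvent $(j(p)-\zeta)^{-1}$ explicitly and evaluates the Riesz integral to obtain $p^+(j(p))=\tfrac{1}{2}\bigl(1+(2p-1)\bigr)=p$. You instead compute $p\,j(p)\,p=p$, $(1-p)\,j(p)\,(1-p)=-(1-p)$, and invoke ``characterising property (i)'' of Theorem~\ref{thm:spectrum-projector}. Note that property (i) as stated only asserts that $P(x;\Sigma)$ satisfies $\sigma_P(PxP)=\Sigma$; it does not literally say that a commuting projector $q$ with $\sigma_q(qxq)=\Sigma$ and $\sigma_{1-q}\bigl((1-q)x(1-q)\bigr)=\Sigma^c$ must \emph{equal} $P(x;\Sigma)$. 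That uniqueness is true and standard (e.g.\ because the resolvent of $x$ decomposes block-wise in $\mathcal{A}(q)\oplus\mathcal{A}(1-q)$, so $P_\Gamma(x)=q$ for a contour $\Gamma$ surrounding $\Sigma$), but you are silently using it, whereas the paper's explicit integral avoids the need for it. It would tighten your argument to either state the uniqueness lemma, or simply note that for any square root of the unit $s$, $P(s;\{1\})=\tfrac{1}{2}(1+s)$, which gives $p^+(2p-1)=p$ directly.
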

\begin{proof}
For $ x\in\mathcal{H}(A) $, $ \sigma\sp + (x) $ is open and closed in
$ \sigma(x) $. Thus, there exists a rectangle
$ Q := (-a,a)\times (-b,b) $ such that $ \sigma(x)\subset Q $.
\begin{TAKEOUT} 
$ Q = (0,a)\times (-b,b) $ with the following properties
\begin{gather*}
\sigma\sp + (x)\subset Q\\
\sigma(x)\subset\{|\im(z)| < b\}\cap(\p Q)\sp c =: \Omega.
\end{gather*}
\end{TAKEOUT}
There is a continuous, closed and simple (in the
sense of Definiton~\ref{defn:simple}) curve $ c $, such 
that $ \text{im}(c) = \p Q\sp+ $. Thus, $ p\sp + (x) = P_c (x) $.
By (iii) of Proposition~\ref{prop:spectrum-basics}, there exists 
$ \delta > 0 $ such that, if $ d(y,x) < \delta $, then 
$ y\in\mathcal{H}(\mathcal{A}) $ and $ \sigma(y)\subset Q $. Thus
$ Q\sp+\cap\sigma(y) = \sigma\sp + (y) $ and $ p\sp + (y) = P_c (y) $.
By Theorem \ref{thm:spectrum-projector}, $ P_c $ is continuous on 
$ B(x,\delta) $. Thus, $ p\sp + $ is continuous on a neighbourhood of $ x $, 
namely $ B(x,\delta) $. Repeating the same argument for every $ x $, we
obtain that $ p\sp + $ is continuous on $ \mathcal{H}(\mathcal{A}) $.

Given a projector $ p $, $ j(p) $ is a square root of 
the unit. In fact,
\[
j(p)\sp 2 = (2p - 1)\sp 2 = 4p\sp 2 - 4p + 1 = 1.
\]
Thus $ \sigma(j(p))\subseteq\{-1,1\} $, hence $ j(p) $ is
hyperbolic. Given $ \zeta\in\mathbb{C}\setminus\{-1,1\} $, we have
\[
(j(p) - \zeta)^{-1} = \frac{\zeta}{1 - \zeta^2} + \frac{j(p)}{1 - \zeta^2} 
= \frac{1}{2}\left(-\frac{1}{\zeta + 1} + \frac{1}{1 - \zeta}\right) -
\frac{1}{2}\left(-\frac{1}{\zeta + 1} - \frac{1}{1 - \zeta}\right) j(p).
\]
Let $ c $ be a simple curve as $ c(t) = 1 + e\sp{-2\pi i t}/2 $. Thus,
following the notations of Definition~\ref{defn:simple}, we have
\[
\sigma\sp + (j(p)) = \sigma(j(p))\cap\Omega_1 (c).
\]
Therefore, we can compute the spectral projector relative to 
$ 1\in\sigma(j(p)) $ as in Theorem~\ref{thm:spectrum-projector}.
By integrating both sides of the above equality,
\[
\begin{split}
p\sp + (j(p)) &= \frac{1}{2\pi i}\int_c (j(p) - \zeta)^{-1} d\zeta\\
&= \frac{1}{2} \big(-\ind_c (-1) + \ind_c (1)\big) - 
\frac{1}{2}\big(-\ind_c (-1) - \ind_c (1)\big)j(p) \\
&= \frac{1}{2}\big(0 + 1 + j(p)\big) = \frac{1}{2}(1 + 2p - 1) = p.
\end{split}
\]
The computation above shows that $ p^+ \circ j $ is the identity map on 
$ \mathcal{P}(\mathcal{A}) $.
To prove that $ j\circ p^+ $ is homotopically equivalent to the 
identity on $ \mathcal{H}(\mathcal{A}) $, we define
\begin{equation}
\label{eq:jp+=id}
H(t,x) = \big((1 - t) x + t\big) p^+ (x) + \big((1 - t) x - t\big) p^- (x).
\end{equation}
Because $ x $ is hyperbolic, 
$ \sigma\sp + (x)\cup \sigma\sp - (x) = \sigma(x) $,
thus, by (iii) of Theorem~\ref{thm:spectrum-projector},
\begin{equation}
\label{eq:decomposition}
p\sp + (x) + p\sp - (x) = 1.
\end{equation}
By (\ref{eq:sp}) and by (ii) of Theorem~\ref{thm:spectrum-projector}, we have
\begin{equation}
\label{spectrum}
\begin{split}
\sigma(H(t,x)) &=
\sigma_{p\sp + (x)} \big((1 - t) xp^+ (x) + tp^+ (x)\big)
\cup\sigma_{p\sp - (x)}\big((1 - t) xp^- (x) - tp^- (x) \big) \\
&= \{(1 - t)\sigma^+ (x) + t\} 
\cup\{ (1 - t)\sigma^- (x) - t\}.
\end{split}
\end{equation}
The second equality follows from (i) of Theorem~\ref{thm:spectrum-projector} 
applied to $ p\sp + (x) $ (resp. $ p\sp - (x) $) and $ \sigma\sp + (x) $
(resp. $ \sigma\sp - (x) $).
Because the subsets of the complex plane $ \{\re(z) > 0\} $ 
and $ \{\re(z) < 0\} $ are convex, the sets in the
second line of (\ref{spectrum}) do not meet the imaginary axis, and thus 
$ H(t,x) $ is hyperbolic. Moreover, 
\begin{gather*}
H(0,x) = p\sp + (x) + p\sp - (x) = 1\\
H(1,x) = p\sp + (x) - p\sp - (x) = 2p\sp + (x) - 1 = j(p\sp + (x))
\end{gather*}
by (\ref{eq:decomposition}). Hence $ H $ is a homotopy of $ j\circ p\sp + $
with the identity map.
\end{proof}
Because $ \mathcal{L}_c (E) $ is a vector space, thus it is 
contractible to a point, the projection onto the first factor in 
$ \mathcal{H}(\mathcal{C}(E))\times\mathcal{L}_c (E) $ is 
a homotopy equivalence. Together with the last two propositions, we have 
proved the following
\begin{cor}
\label{cor:homotopy-equivalence}
The map $ \Psi\colon e\mathcal{H}(E)\rightarrow\mathcal{P}(\mathcal{C}(E)) $, 
$ A\mapsto p^+ (A + \mathcal{L}_c (E)) $ is a homotopy equivalence.
\end{cor}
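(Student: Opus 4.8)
The plan is to obtain $\Psi$ as a composition of three maps, each of which has already been shown above to be either a homeomorphism or a homotopy equivalence, and then to invoke the elementary fact that a composition of homotopy equivalences is a homotopy equivalence. Concretely, I would factor $\Psi$ as
\[
e\mathcal{H}(E)\ \xrightarrow{\ \Phi\ }\ \mathcal{H}(\mathcal{C}(E))\times\mathcal{L}_c (E)\ \xrightarrow{\ \mathrm{pr}_1\ }\ \mathcal{H}(\mathcal{C}(E))\ \xrightarrow{\ p^+\ }\ \mathcal{P}(\mathcal{C}(E)),
\]
where $\Phi(A) = (\prc(A),\,A - s(\prc(A)))$ is the homeomorphism of the Proposition preceding the statement, $\mathrm{pr}_1$ is the projection onto the first factor, and $p^+$ is the map of Proposition~\ref{prop:H-P} applied to the Banach algebra $\mathcal{A} = \mathcal{C}(E)$. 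Tracing an element through, the composite sends $A$ to $p^+(\prc(A)) = p^+(A + \mathcal{L}_c (E))$, which is precisely $\Psi(A)$; so it suffices to check that each of the three factors is a homotopy equivalence.

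For $\Phi$ this is automatic: it is a homeomorphism by the quoted Proposition. For $\mathrm{pr}_1$, a homotopy inverse is $x\mapsto (x,0)$; one composite is the identity of $\mathcal{H}(\mathcal{C}(E))$ on the nose, and the other is homotopic to the identity of $\mathcal{H}(\mathcal{C}(E))\times\mathcal{L}_c (E)$ through the straight-line homotopy $(t,(x,K))\mapsto (x,(1-t)K)$, which is well defined because $\mathcal{L}_c (E)$ is a vector space — the same contractibility already used to produce the section $s$. Finally $p^+\colon\mathcal{H}(\mathcal{C}(E))\to\mathcal{P}(\mathcal{C}(E))$ is a homotopy equivalence by Proposition~\ref{prop:H-P}; here one need only remark that $p^+$ is defined on all of $\mathcal{H}(\mathcal{C}(E))$, since for a hyperbolic element the imaginary axis separates $\sigma^+$ from $\sigma^-$, so $\sigma^+$ is automatically open and closed in the spectrum and the spectral projector $P(x;\{\re z>0\})$ exists by Theorem~\ref{thm:spectrum-projector}.

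There is essentially no serious obstacle here: the substance is contained in Proposition~\ref{prop:H-P} and in the homeomorphism Proposition, and what remains is bookkeeping. The only points that deserve a second look are that Proposition~\ref{prop:H-P} is genuinely stated for an arbitrary (real or complex) Banach algebra, so it legitimately applies to $\mathcal{C}(E)$, and that the homotopy collapsing the second factor lands inside $e\mathcal{H}(E)$ under $\Phi^{-1}$ — which it does, because $\Phi^{-1}(x,K)=s(x)+K$ and $\prc(s(x)+K)=x$ is unchanged along the homotopy, so hyperbolicity in the Calkin algebra is preserved throughout.
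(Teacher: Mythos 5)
Your proof is correct and follows essentially the same route as the paper: factor $\Psi$ through the homeomorphism $e\mathcal{H}(E)\cong\mathcal{H}(\mathcal{C}(E))\times\mathcal{L}_c(E)$, note that projection onto the first factor is a homotopy equivalence because $\mathcal{L}_c(E)$ is contractible, and finish with Proposition~\ref{prop:H-P} applied to $\mathcal{A}=\mathcal{C}(E)$. The only difference is that you spell out the bookkeeping (the explicit straight-line homotopy, the trace of $A$ through the composite) that the paper leaves implicit.
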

Given an essentially hyperbolic operator $ A $,
we denote by $ P\sp + (A) $ and $ P\sp - (A) $ the spectral projectors
relative to $ \{\re(z) > 0\} $ and 
$ \{\re(z) < 0\} $, respectively.
\begin{prop}
\label{prop:componentschar}
Given a connected component $ \ccH\subset e\mathcal{H}(E) $, there exists
$ P\in\mathcal{P}(\mathcal{L}(E)) $ such that $ 2P - I\in\ccH $. Moreover,
two essentially hyperbolic operators $ A,B $ belong to the same connected
component $ \ccH $, if and only if there exists $ T\in GL_I (E) $ such that
\[
T P\sp + (A) T^{-1} - P\sp + (B)\in\mathcal{L}_c (E).
\]
\end{prop}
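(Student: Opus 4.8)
The plan is to run everything through the homotopy equivalence $\Psi\colon e\mathcal{H}(E)\to\mathcal{P}(\mathcal{C}(E))$ of Corollary~\ref{cor:homotopy-equivalence}, the surjective Serre fibration $\mathcal{P}(\prc)$ of Proposition~\ref{prop:loc-sect-Calkin}, and the conjugation statement of Theorem~\ref{thm:pair-of-projectors}. The one preparatory fact I need is the compatibility
\[
\prc\bigl(P^+(A)\bigr)=\Psi(A),\qquad A\in e\mathcal{H}(E).
\]
To obtain it, first note that essential hyperbolicity forces $\sigma(A)\cap\J$ to be a finite set of isolated points, so $\sigma^+(A)=\sigma(A)\cap\{\re z>0\}$ is open and closed in $\sigma(A)$; hence $P^+(A)=P(A;\sigma^+(A))$ is a legitimate spectral projector. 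Applying the algebra homomorphism $\prc$ via Theorem~\ref{thm:spectrum-projector}(iv) (or, equivalently, integrating the Riesz formula $P_\Gamma(A)$ over a simple curve $\Gamma\subset\{\re z>0\}$ enclosing $\sigma^+(A)$ and avoiding $\sigma_e(A)$) gives $\prc(P^+(A))=P(\prc(A);\sigma^+(A))$, and since $\sigma(\prc(A))=\sigma_e(A)\subseteq\sigma(A)$ with $\sigma^+(A)\cap\sigma_e(A)=\sigma^+(\prc(A))$, this equals $p^+(\prc(A))=\Psi(A)$.

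\emph{Existence of $P$.} Given the component $\ccH$, I would fix any $A\in\ccH$ and, using surjectivity of $\mathcal{P}(\prc)$, choose $P\in\mathcal{P}(\mathcal{L}(E))$ with $\prc(P)=\Psi(A)$. Then $2P-I$ is a square root of the unit, so $\sigma(2P-I)\subseteq\{-1,1\}$ and $2P-I\in e\mathcal{H}(E)$; moreover, by the identity $p^+\circ j=\mathrm{id}$ established inside the proof of Proposition~\ref{prop:H-P},
\[
\Psi(2P-I)=p^+\bigl(2\prc(P)-1\bigr)=p^+\bigl(j(\prc(P))\bigr)=\prc(P)=\Psi(A).
\]
Because $\Psi$ is a homotopy equivalence it induces a bijection on path components, and all the spaces here are locally arc-wise connected, so $2P-I$ lies in the component of $A$; that is, $2P-I\in\ccH$.

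\emph{The characterisation.} If $A,B\in\ccH$, then $\Psi(A)=\prc(P^+(A))$ and $\Psi(B)=\prc(P^+(B))$ lie in the same component of $\mathcal{P}(\mathcal{C}(E))$; I would lift a path joining them through the Serre fibration $\mathcal{P}(\prc)$, starting at $P^+(A)$, to obtain $Q\in\mathcal{P}(\mathcal{L}(E))$ in the same component of $\mathcal{P}(\mathcal{L}(E))$ as $P^+(A)$ and with $\prc(Q)=\prc(P^+(B))$, whence $Q-P^+(B)\in\mathcal{L}_c(E)$. Theorem~\ref{thm:pair-of-projectors} then supplies $T\in G_1(\mathcal{L}(E))=GL_I(E)$ with $TP^+(A)=QT$, so $TP^+(A)T^{-1}-P^+(B)=Q-P^+(B)\in\mathcal{L}_c(E)$. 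Conversely, if $TP^+(A)T^{-1}-P^+(B)=K\in\mathcal{L}_c(E)$ for some $T\in GL_I(E)$, applying $\prc$ and using the preparatory identity gives $\prc(T)\,\Psi(A)\,\prc(T)^{-1}=\Psi(B)$; conjugating $\Psi(A)$ along the $\prc$-image of a path in $GL(E)$ from $I$ to $T$ produces a path in $\mathcal{P}(\mathcal{C}(E))$ from $\Psi(A)$ to $\Psi(B)$, so $\Psi(A)$ and $\Psi(B)$ — hence, by the $\pi_0$-bijection, $A$ and $B$ — lie in the same component of $e\mathcal{H}(E)$.

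The step I expect to require the most care is the preparatory identity $\prc(P^+(A))=\Psi(A)$: one must check that $\sigma^+(A)$ is genuinely a spectral subset of $\sigma(A)$ (hence at positive distance from $\J$) so that $P^+(A)$ and the contour computing it are well defined, and that passing to the Calkin algebra picks out exactly $\sigma_e(A)\cap\{\re z>0\}$. Once this is in place, both halves of the proposition follow formally from the Serre-fibration lifting together with Theorem~\ref{thm:pair-of-projectors}.
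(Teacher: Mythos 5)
Your proof is correct, and it is more complete than the paper's in one respect: the paper proves the existence of $P$ and the forward ($\Rightarrow$) direction of the equivalence, but never addresses the converse ($\Leftarrow$) direction, which you supply. Your argument for the forward direction (lift a path joining $\Psi(A)$ to $\Psi(B)$ through the Serre fibration $\mathcal{P}(\prc)$, then invoke Theorem~\ref{thm:pair-of-projectors}) is essentially the same as the paper's. Your treatment of the preparatory identity $\prc(P^+(A))=\Psi(A)$ makes explicit a compatibility the paper uses silently, and it is correct as you write it.

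Where your route genuinely diverges is the existence of $P$. The paper proceeds concretely: by the decomposition (\ref{eq:structure}) one writes $A=H+K$ with $H$ hyperbolic and $K$ compact, then the straight-line path $t\mapsto H+t(A-H)$ stays in $e\mathcal{H}(E)$ (its Calkin image is constant) and shows $H\in\ccH$, and finally the explicit homotopy $H(t,\cdot)$ constructed inside the proof of Proposition~\ref{prop:H-P}, applied to $x=H$, connects $H$ to $2P^+(H)-I$ through hyperbolic operators. You instead lift $\Psi(A)$ to a projector $P$ via surjectivity of $\mathcal{P}(\prc)$, observe $\Psi(2P-I)=\Psi(A)$ using $p^+\circ j=\mathrm{id}$, and conclude from the $\pi_0$-bijection induced by the homotopy equivalence $\Psi$. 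Both are valid; the paper's argument produces an explicit path and does not invoke Corollary~\ref{cor:homotopy-equivalence}, while yours leans more heavily on the abstract machinery already set up (surjectivity of the fibration and the $\pi_0$-bijection), which makes the existence part and the $\Leftarrow$ direction of the equivalence share a single mechanism. One small remark: you should cite local arc-wise connectedness of both $e\mathcal{H}(E)$ and $\mathcal{P}(\mathcal{C}(E))$ when you pass from equality of $\Psi$-images to lying in the same connected component, since $\pi_0$ a priori controls path components; you do flag this, and the paper has established it, so this is fine.
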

\begin{proof}
By Proposition~\ref{prop:product}, $ e\mathcal{H}(E) $ is an open subset of 
$ \mathcal{L}(E) $. Thus, $ \ccH $ is path-connected. 
Let $ A\in\ccH\subset e\mathcal{H}(E) $ be an essentially hyperbolic operator.
By Theorem~\ref{thm:A+K}, there exists a hyperbolic operator $ H $ such that 
\[
A - H\in\mathcal{L}_c (E).
\]
By the converse of the same theorem, the continuous convex combination
\[
\gamma\colon t\mapsto A + t(H - A)
\]
lies in $ e\mathcal{H}(E) $. Because $ \gamma(0) = A\in\ccH $, 
$ \gamma(1) = H\in\ccH $. By Proposition~\ref{prop:H-P}, 
$ j\circ p\sp + (H) = 2 P\sp + (H) - I $ is path-connected to $ H $, a path
being defined as in (\ref{eq:jp+=id}). Thus $ 2P\sp + (H) - I\in\ccH $,
and this concludes the first part of the proposition.

Given $ A,B\in\ccH $, there exists a path $ A_t $ such that
$ A(0) = A $ and $ A(1) = B $. Thus, the path 
\[
\alpha := \Psi\circ A\in\mathcal{P}(\mathcal{C}(E))
\]
connects $ \alpha(0) = \Psi(A) $ to $ \alpha(1) = \Psi(B) $. By
Proposition \ref{prop:loc-sect-Calkin}, the fibration 
$ (\mathcal{P}(\mathcal{L}(E),\mathcal{P}(\mathcal{C}(E),p) $ satisfies
the homotopy lifting property w.r.t. to the unit interval $ [0,1] $.
Thus, there exists a path of projectors $ P $ such that
\begin{equation}
\label{eq:lift}
P(0) = P\sp + (A),\ \ p(P(t)) = \alpha(t).
\end{equation}
By Theorem~\ref{thm:pair-of-projectors}, there exists $ T\in GL_I (E) $
such that
\begin{equation}
\label{eq:conjugation}
TP\sp + (A) T\sp {-1} = P(1).
\end{equation}
From (\ref{eq:lift}) with $ t = 1 $, we obtain
\[
p(P(1)) = \Psi(B) = p\sp + (B + \mathcal{L}_c (E)) = 
P\sp + (B) + \mathcal{L}_c (E) = p(P\sp + (B)).
\]
The second equality follows from the definition $ \Psi $ and the third
one from (iv) of Proposition \ref{thm:spectrum-projector}.
Thus, comparing the first and the last terms in the chain of equalities
above, we obtain
\[
P(1) - P\sp + (B) \in\mathcal{L}_c (E).
\]
Hence, by (\ref{eq:conjugation}),
\[
TP\sp + (A) T\sp {-1} - P\sp + (B)\in\mathcal{L}_c (E).
\]
\end{proof}
\section{The spectral flow}
\label{Three}
Let $ A\colon [0,1]\ra e\mathcal{H}(E) $ be a continuous path. 
By Proposition~\ref{cor:homotopy-equivalence}, $ \Psi(A(t)) $ is
a continuous path in $ \mathcal{P}(\mathcal{C}(E)) $. This path can
be lifted to a path of projectors $ P $, such that 
\[
\prc(P(t)) = \Psi(A(t)) = \prc(P\sp + (A(t))) 
\]
by Proposition~\ref{prop:loc-sect-Calkin}. We define the integer
\begin{equation}
\label{eq:def-Gar10}
\Sf(A;P) := [P(0) - P\sp + (A(0))] - [P(1) - P\sp + (A(1))].
\end{equation}
\begin{prop}
\label{prop:sf-well-defined}
The integer $ \Sf(A;P) $ does not depend on the choice of the path
of projectors $ P $.
\end{prop}
\begin{proof}
Let $ Q $ be a path of projectors such that $ \prc(Q(t)) = \prc(P(t)) $.
Thus, $ Q(0) - P(0) $ and $ Q(1) - P(1) $ are compact operators. By (ii) of 
Theorem~\ref{thm:relative-dimension}, we have
\[
\begin{split}
\Sf(A;Q) &= [Q(0) - P\sp + (A(0))] - [Q(1) - P\sp + (A(1))]
\\
&= [Q(0) - P(0)] + [P(0) - P\sp + (A(0))] \\
&- [Q(1) - P(1)] - [P(1) - P\sp + (A(1))] = \Sf(A;Q)
\end{split}
\]
By (iii) of Theorem~\ref{thm:relative-dimension}, $ [Q(t) - P(t)] $ is
constant. Thus, the third equality follows.
\end{proof}
\begin{defn}
Given $ A\colon [0,1]\rightarrow e\mathcal{H}(E) $ continuous, we define the 
\textsl{spectral flow} as
the integer $ \Sf(A;P) $ where $ P $ is any of the paths of projectors
such that $ \prc(P(t)) = \prc(P\sp + (A(t))) $. We denote it by $ \Sf(A) $.
\end{defn}
Given $ T\in\mathcal{L}(E) $ and $ S\in\mathcal{L}(F) $, we refer to
$ T\oplus S $ as the linear operator on $ E\oplus F $ such that
$ T\oplus S (x,y) = (Tx,Sy) $.
Given two paths $ A $ and $ B $ such that $ A(1) = B(0) $, we denote by 
$ A * B $ the continuous path
\[
A*B(t) = 
\begin{cases}
A(2t) & 0\leq t\leq 1/2\\
B(2t - 1) & 1/2\leq t\leq 1
\end{cases}
\]
\begin{prop}
\label{prop:sf-properties}
The spectral flow satisfies the following properties:
\begin{enumerate}
\item Given two paths $ A $ and $ B $ such that $ A(1) = B(0) $, 
$ \Sf(A*B) = \Sf(A) + \Sf(B) $;
\item the spectral flow of a constant path or a path in 
$ \mathcal{H}(\mathcal{L}(E)) $
is zero;
\item it is invariant for homotopies with endpoints in 
$ \mathcal{H}(\mathcal{L}(E)) $ and for fixed-endpoint homotopies in 
$ e\mathcal{H}(E) $;
\item if $ A_i \in C([0,1],e\mathcal{H}(E_i)) $ for $ 1\leq i\leq n $,
then $ \Sf(\oplus_{i = 1} \sp n A_i) = \sum_{i = 1} \sp n \Sf(A_i) $;
\item if $ E $ is an $ n $-dimensional linear space, then for
every integer $ - n\leq k\leq n $, there is a path such that $ \Sf(A) = k $;
\item if $ E $ has infinite dimension, then for every $ k $ there is 
$ A $ such that $ \Sf(A) = k $.
\end{enumerate}
\end{prop}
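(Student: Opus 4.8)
The plan is to derive (i)--(iv) from the formal behaviour of the relative dimension $[\,\cdot-\cdot\,]$ recorded in Theorem~\ref{thm:relative-dimension} (additivity under splitting and local constancy) together with the homotopy lifting property of $\mathcal{P}(\prc)$ from Proposition~\ref{prop:loc-sect-Calkin}, and to obtain (v)--(vi) by explicit constructions, reducing (vi) to the finite-dimensional case (v) via a complemented finite-dimensional subspace. Throughout I will use, without further comment, that $\Sf(A)$ does not depend on the chosen lift $P$ (proved just before the Definition) and that $\prc(P^+(A))=p^+(\prc(A))$ by (iv) of Theorem~\ref{thm:spectrum-projector}; in particular the condition ``$\prc(P(t))=P^+(A(t))$'' on a lift is the same as ``$P(t)-P^+(A(t))\in\mathcal{L}_c(E)$'', which is exactly what makes the brackets in the definition of $\Sf$ well defined.

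For (i), write $A=A_1*A_2$ with $A_1(1)=A_2(0)$. First I would take a lift $P$ of $t\mapsto P^+(A_1(t))$, and then, applying the homotopy lifting property with prescribed initial value $P(1)$, a lift $Q$ of $t\mapsto P^+(A_2(t))$ with $Q(0)=P(1)$; the concatenation $P*Q$ then lifts $P^+$ along $A_1*A_2$. Inserting the vanishing quantity $[\,P(1)-P^+(A_1(1))\,]-[\,Q(0)-P^+(A_2(0))\,]$ into $\Sf(A_1*A_2;P*Q)$ and regrouping gives $\Sf(A_1*A_2)=\Sf(A_1)+\Sf(A_2)$ (and, together with the fixed-endpoint invariance in (iii), that $\Sf$ of a reversed path is $-\Sf$ of the path). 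For (ii): the constant lift makes both brackets equal in the definition, so a constant path has $\Sf=0$; and for a path $A$ lying in $\mathcal{H}(\mathcal{L}(E))$ the assignment $t\mapsto p^+(A(t))$ is a \emph{continuous} lift by Proposition~\ref{prop:H-P}, and with this choice $P(j)=P^+(A(j))$, so both brackets vanish.

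The substantive point is (iii), and it is the step where I expect some care to be needed. Given a homotopy $h\colon[0,1]\times[0,1]\to e\mathcal{H}(E)$, lift $(s,t)\mapsto p^+(\prc(h(s,t)))$ along the Serre fibration $\mathcal{P}(\prc)$ (first lift the path $s\mapsto p^+(\prc(h(s,0)))$, then lift the homotopy in the $t$-direction) to a continuous $H\colon[0,1]^2\to\mathcal{P}(\mathcal{L}(E))$ with $\prc(H(s,t))=p^+(\prc(h(s,t)))$, so that for each fixed $s$
\[
\Sf(h(s,\cdot))=\big[H(s,0)-P^+(h(s,0))\big]-\big[H(s,1)-P^+(h(s,1))\big].
\]
By construction $H(s,j)-P^+(h(s,j))\in\mathcal{L}_c(E)$, so each bracket is defined, and by (iii) of Theorem~\ref{thm:relative-dimension} it varies continuously with $s$; being integer-valued it is constant in $s$. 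If the endpoints are fixed, $h(s,j)$ is independent of $s$ and we are done; if the endpoints are only required to stay in $\mathcal{H}(\mathcal{L}(E))$, I would additionally use that there $P^+(h(s,j))=p^+(h(s,j))$ depends continuously on $s$ (Proposition~\ref{prop:H-P}), so $s\mapsto[\,H(s,j)-P^+(h(s,j))\,]$ is still continuous and hence constant. The subtlety to watch is exactly that the two endpoint brackets remain well defined along the deformation, i.e.\ that $H(s,j)-P^+(h(s,j))$ stays compact --- which the lifting condition guarantees; for free endpoints not constrained to $\mathcal{H}(\mathcal{L}(E))$, $P^+(h(s,j))$ can jump, which is why the statement singles out precisely these two homotopy relations. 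For (iv), pick lifts $P_i$ over $A_i$; then $\bigoplus_i P_i$ is a lift over $\bigoplus_i A_i$, which is essentially hyperbolic because it is block-diagonal, with $\sigma_e(\bigoplus_i A_i)=\bigcup_i\sigma_e(A_i)$ (avoiding $\J$) and $P^+(\bigoplus_i A_i)=\bigoplus_i P^+(A_i)$. Since the kernel and cokernel of a block-diagonal operator split as direct sums over the blocks, $[\,\bigoplus_i P_i(j)-\bigoplus_i P^+(A_i(j))\,]=\sum_i[\,P_i(j)-P^+(A_i(j))\,]$, whence $\Sf(\bigoplus_i A_i)=\sum_i\Sf(A_i)$.

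For (v), when $\dim E=n<\infty$ one has $\mathcal{L}_c(E)=\mathcal{L}(E)$, so $\mathcal{C}(E)$ is trivial, every operator is essentially hyperbolic, and the lifting condition $\prc(P(t))=P^+(A(t))$ imposes nothing; taking the constant lift $P\equiv 0$ and applying (i) of Theorem~\ref{thm:relative-dimension}, $\Sf(A)$ equals, up to an overall sign, the difference between the number of eigenvalues of $A(1)$ and of $A(0)$ with positive real part, counted with algebraic multiplicity. A path of diagonal matrices that carries exactly $|k|$ prescribed diagonal entries across the imaginary axis (from $-1$ to $1$, or the reverse, according to the sign of $k$) while keeping the others equal to $-1$ thus realizes every integer $k$ with $-n\le k\le n$. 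For (vi), given $k$ choose a finite-dimensional subspace $F\subset E$ with $\dim F\ge|k|$; being finite-dimensional it is complemented, $E=F\oplus G$ with $G$ infinite-dimensional. Let $A_F\colon[0,1]\to\mathcal{L}(F)$ be a path with $\Sf(A_F)=k$ as produced in (v) and put $A=A_F\oplus(-I_G)$; each $A(t)$ is block-diagonal with $\sigma_e(A(t))=\{-1\}$, hence essentially hyperbolic, the constant path $-I_G$ lies in $\mathcal{H}(\mathcal{L}(G))$ and so has spectral flow $0$ by (ii), and by (iv) $\Sf(A)=\Sf(A_F)+\Sf(-I_G)=k$.
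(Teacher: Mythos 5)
Your proof is correct and follows essentially the same route as the paper's: (i)--(iv) are derived exactly as there (concatenated lifts with matched midpoint value; constant lift and the continuous lift $P^+\circ A$ in the hyperbolic case; lifting the two-parameter homotopy and using continuity plus integer-valuedness of the endpoint brackets; block-diagonal lifts and additivity of the relative dimension), and (v)--(vi) use the same kind of explicit diagonal paths, with (vi) obtained by padding a finite-dimensional block with a constant hyperbolic operator on a complement, which is the paper's $I_{X^k}\oplus(2t-1)I_{R_k}$ in only slightly different clothing. The added remarks -- that the lifting condition $\prc(P(t))=p^+(\prc(A(t)))$ is exactly what keeps the brackets defined, and that reversal flips the sign of $\Sf$ -- are consistent with the paper and do not change the argument.
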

\begin{proof}
(i). Let $ A,B $ be two paths such that $ A(1) = B(0) $. We can choose
paths of projectors $ P $ and $ Q $ such that 
$ \prc(P(t)) = \prc(P\sp + (A(t)) $ and $ \prc(Q(t)) = \prc(P\sp + (B(t))) $, 
with $ Q(0) = P(1) $. 
Denote by $ C $ and $ R $ 
the paths $ A*B $ and $ P*Q $, respectively. Then,
\[
\begin{split}
\Sf(A*B) &= [R(0) - P\sp + (C(0))]
- [R(1) - P\sp + (C(1))]\\
&= [P(0) - P\sp + (A(0))] -
[Q(1) - P\sp + (B(1))] = [P(0) - P\sp + (A(0))] \\
&- [P(1) - P\sp + (A(1))]
+ [Q(0) - P\sp + (B(0))] - [Q(1) - P\sp + (B(1))] \\
&= \Sf(A) + \Sf(B).
\end{split}
\]\par
\noindent (ii). If $ A $ is constant, $ P\sp + (A(t)) $ is constant; if
$ A $ is hyperbolic, $ P\sp + (A(t)) $ is continuous. In both cases,
$ P\sp + (A(t)) $ is a continuous path and can be chosen as a lifting path
of $ \prc(P\sp + (A(t))) $. Therefore,
\[
\Sf(A) = [P\sp + (A(0)) - P\sp + (A(0))] - [P\sp + (A(1)) - P\sp + (A(1))] = 0.
\]
\noindent (iii). Let $ H\colon I\times I \ra e\mathcal{H}(E) $ be a continuous
map. By the homotopy lifting property of the fibre bundle
$ \prc\colon\mathcal{P}(\mathcal{L}(E))\ra\mathcal{P}(\mathcal{C}(E)) $ w.r.t.
$ I\sp 2 $, there exists $ P\colon I\times I\ra \mathcal{P}(\mathcal{L}(E)) $ 
such that
\[
P(t,s) - P\sp + (H(t,s))\in\mathcal{L}_c (E),\text{ for every } t,s.
\]
Let $ H(\cdot,0) = A $ and $ H(\cdot,1) = B $. We have
\[
\Sf(A) = [P(0,0) - P\sp + (H(0,0))] - [P(1,0) - P\sp + (H(1,0))].
\]
For $ i=0,1 $ and every $ s $, the operator $ P(i,s) - P\sp + (H(i,s)) $ 
is compact. For a fixed $ i $, the right summand is constant or continuous, 
whether the homotopy has fixed endpoints in $ e\mathcal{H}(E) $ or lies in 
$ \mathcal{H}(\mathcal{L}(E)) $. In both cases, is continuous. By (iii) of
Theorem~\ref{thm:relative-dimension}, there are integers $ k_1,k_2 $ such that
\[
[P(i,s) - P\sp + (H(i,s))] = k_i
\]
for every $ 0\leq s\leq 1 $ and $ i=0,1 $. Thus, 
$ \Sf(A) = k_0 - k_1 = \Sf(B) $.\par
\noindent (iv). Let $ P_i $ be continuous paths of projectors such that 
$ P_i (t) - P\sp + (A_i (t))\in\mathcal{L}_c (E_i) $.
\[
\begin{split}
\Sf(\oplus_{i = 1}\sp n A_i) =& \left[\oplus_{i = 1} \sp n P_i (0) - 
\oplus_{i = 1} \sp n P\sp + (A_i (0))\right] - 
\left[\oplus_{i = 1} \sp n P_i (1) - 
\oplus_{i = 1} \sp n P\sp + (A_i (1))\right] \\
=& \sum_{i = 1}\sp n [P_i (0) - P\sp + (A_i (0))] - 
[P_i (1) - P\sp + (A_i (1))] = \sum_{i = 1} \sp n \Sf(A_i).
\end{split}
\]
\par
\noindent (v). We denote the identity on $ \R\sp k $ by $ I_k $.
Given $ 0\leq k\leq n $, the spectral flow of 
\[
A(t) = (2t - 1) I_k \oplus I_{n - k}
\]
can be computed using $ P(t)\equiv I_n $. Because $ P\sp + (A(1)) = I_n $
and $ P\sp + (A(0)) = 0\oplus I_{n - k} $, we have
\[
\Sf(A;I_n) = [I_n - 0\oplus I_{n - k}] - [I_n - I_n] = k
\]
by (i) of Theorem~\ref{thm:relative-dimension}. We define
$ \overline{A}(t) := A(1 - t) $. By property (i), proved above,
$ \Sf(\overline{A};I_n) = -k $.\par
\noindent (vi). Given $ k\in\mb{Z} $, let $ E = X\sp k \oplus R_k $ 
where $ X\sp k $ is a closed subspace and $ \dim(R_k) = k $. Thus, the 
spectral flow of 
$ A(t) = (2t - 1) I_{R_k} \oplus I_{X\sp k} $ can be computed
with $ P(t)\equiv I $. We obtain $ \Sf(A;I) = k $ and 
$ \Sf(\overline{A};I) = -k $.
\end{proof}
\subsection{Spectral sections}
The definition of spectral flow we used corresponds to the one given 
by C.~Zhu and Y.~Long in \cite{ZL99} for paths of \textsl{admissible}
operators (see \cite[Definition~2.3]{ZL99}), 
which are essentially hyperbolic. We recall the definition of
s-section:
\begin{defn}
\label{defn:section}
An \textsl{s-section} for a path of projectors $ Q $ on 
$ J\subset [0,1] $ is a continuous path $ P $ such that 
$ P(t) - Q(t)\in\mathcal{L}_c (E) $.
\end{defn}
Given a continuous path $ A\colon [0,1]\ra e\mathcal{H}(E) $,
the authors show in \cite[Lemma~2.5]{ZL99} and \cite[Corollary~2.1]{ZL99} that 
there exists a partition of the unit interval $ (J_k)_{k = 1} \sp n $
and $ P_k \colon J_k\ra\mathcal{P}(\mathcal{L}(E)) $ such that
\begin{gather}
\label{eq:ZL99-1}
P_k \text{ is an s-section for } P\sp + (A)\text{ on } J_k\\
\label{eq:ZL99-2}
P_k (t)\text{ is a spectral projector of } A(t).
\end{gather}
Then, they define
\begin{equation}
\label{eq:def-ZL99}
\Sf(A) = \sum_{k = 1} \sp n \Sf(A_k;P_k)
\end{equation}
where $ A_k $ is the restriction of $ A $ to $ J_k $. In our definition
we do not need to partition the unit interval because we dropped the
requirement (\ref{eq:ZL99-2}).
This allows us to simplify the definition of spectral flow and to provide 
simpler proofs of well known properties - such as the homotopy invariance - 
than the original ones in \cite[Proposition~2.2]{ZL99} or in \cite{Phi96}.

We conclude by showing that there exists a path $ A $ such that 
$ P\sp + (A(t)) $ does not admit an s-section fulfilling (\ref{eq:ZL99-2}).
\begin{example}
Consider the decomposition $ E = R_1 \oplus X_{-} \oplus X_+ $, where
$ E $ is a Banach space, $ X_- $ and $ X_+ $ are closed, infinite-dimensional
subspaces and $ \dim(R_1) = 1 $. Denote by $ P_1,P_-,P_+ $ the projectors 
onto $ R_1,X_- $ and $ X_+ $, respectively. Define
\[
A\colon [0,1]\ra e\mathcal{H}(E),\quad A(t) = P_+ - P_- + (2t - 1) P_1.
\]
Then $ A(t)\in e\mathcal{H}(E) $, and no continuous 
s-section satisfying (\ref{eq:ZL99-2}) exists.
\end{example}
\begin{proof}
For every $ t\in [0,1] $ we can write 
$ A(t) = P_+ - (P_- + P_1) + 2t P_1 $; because 
$ P_+ - P_- - P_1 \in\mathcal{H}(\mathcal{L}(E)) $ (in fact, is
a square root of the identity) and $ P_1 $ is compact,
$ A(t)\in e\mathcal{H}(E) $. By contradiction, suppose that such $ P $ 
exists. We have
\[
A(0) = P_+ - P_- - P_1,\ \ 
P\sp + (A(0)) = P_+,\ \ \sigma(A(0)) = \{-1,1\}.
\]
Because $ P(0) $ is spectral, there exists $ \Sigma_0\subset\sigma(A(0)) $
such that $ P(0) = P(A(0);\Sigma_0) $. The only choice is $ \Sigma_0 = \{1\} $,
thus $ P(0) = P(A(0);\{1\}) = P_+ $. On the other endpoint,
\[
A(1) = P_+ - P_- + P_1,\ \ 
P\sp + (A(1)) = P_+ + P_1,\ \ \sigma(A(1)) = \{-1,1\}.
\]
As above, $ P(1) $ is spectral and $ P(1) = P(A(1);\{1\}) = P_+ + P_1 $.
Because $ P $ is an s-section and $ P\sp + (A(t)) - P\sp + (A(s)) $ is
compact for every $ 0\leq t,s\leq 1 $, $ P(t) - P(s) $ is also compact.
By (iii) of Theorem~\ref{thm:relative-dimension}, $ m(t) := [P(t) - P(0)] $ 
is constant. Because $ m(0) = 0 $, $ m(1) = 0 $. But,
\[
0 = m(1) = [P(1) - P(0)] = [(P_+ + P_1) - P_+] = [P_1] = 1
\]
where the last equality follows from (i) of 
Theorem~\ref{thm:relative-dimension}. Thus, we obtained a contradiction.
\end{proof}
\section{Spectral flow as group homomorphism}
\label{Four}
By (iii) and (i) of Proposition \ref{prop:sf-properties}, the spectral
flow determines a $ \mb{Z} $-valued group homomorphism on the fundamental
group of each connected component of $ e\mathcal{H}(E) $. Given
a projector $ P $, we denote by $ \Sf_P $ the spectral flow on the
fundamental group of the connected component of $ 2P - I $.\vskip .4em
The fiber of $ \prc\colon \mathcal{P}(\mathcal{L}(E))\rightarrow
\mathcal{P}(\mathcal{C}(E)) $ over a point of the base space, 
$ P + \mathcal{L}_c (E) $ is the set
\begin{gather*}
\mathcal{P}_c (E;P) = \{Q\in\mathcal{P}(\mathcal{L}(E)):
Q - P\in\mathcal{L}_c (E)\}\\
i\colon\mathcal{P}_c (E;P)\hookrightarrow \mathcal{P}(\mathcal{L}(E)).
\end{gather*}
\begin{prop}
\label{prop:PcE}
For every projector $ P $, the connected components of $ \mathcal{P}_c (E;P) $
correspond to $ \mb{Z} $ through the bijection $ Q\mapsto [P - Q] $.
Moreover, if the range and the kernel have infinite dimension,
$ \pi_1 (\mathcal{P}_c (E;P),P)\cong\mb{Z}_2 $.
\end{prop}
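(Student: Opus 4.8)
The plan is to analyze the fiber $\mathcal{P}_c(E;P)$ using the relative dimension $[P-Q]$ and the homotopy information about $\mathcal{P}(\mathcal{L}(E))$ that has already been assembled. For the first assertion, I would first check that $Q\mapsto[P-Q]$ is well-defined and \emph{locally constant} on $\mathcal{P}_c(E;P)$: this is exactly (iii) of Theorem~\ref{thm:relative-dimension}, since on $\mathcal{P}_c(E;P)$ the difference $P-Q$ is always compact and the map $[P-Q]$ is continuous, hence constant on connected components. So it descends to a map $\pi_0(\mathcal{P}_c(E;P))\to\mathbb{Z}$. Injectivity of this map on components amounts to: if $[P-Q]=[P-Q']$ then $Q$ and $Q'$ lie in the same component of $\mathcal{P}_c(E;P)$. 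By additivity (ii) of Theorem~\ref{thm:relative-dimension}, $[Q-Q']=[Q-P]+[P-Q']=0$, so it suffices to show that $[P-Q]=0$ implies $Q$ is joined to $P$ inside $\mathcal{P}_c(E;P)$. Here one uses Theorem~\ref{thm:pair-of-projectors}: since $[P-Q]=0$ means $Q\colon\ran(P)\to\ran(Q)$ is an index-zero Fredholm operator with $P-Q$ compact, $\ran(P)\cong\ran(Q)$ and $\ker(P)\cong\ker(Q)$ by a finite-rank adjustment, and one produces $u\in GL(E)$ with $uPu^{-1}=Q$ which can be taken in $GL_I(E)$ and with $u-I$ compact; conjugating $P$ along a path from $I$ to $u$ in $GL_I(E)\cap(I+\mathcal{L}_c(E))$ gives the required path in $\mathcal{P}_c(E;P)$. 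Surjectivity onto $\mathbb{Z}$ is the construction already used in (vi) of Proposition~\ref{prop:sf-properties}: split off a finite-dimensional piece and perturb $P$ there to realize any prescribed relative dimension.

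For the statement about $\pi_1$, the key input is Proposition~\ref{prop:loc-sect-Calkin}: $\mathcal{P}(\prc)\colon\mathcal{P}(\mathcal{L}(E))\to\mathcal{P}(\mathcal{C}(E))$ is a Serre fibration with fiber $\mathcal{P}_c(E;P)$ over $P+\mathcal{L}_c(E)$. I would write down the tail of the long exact homotopy sequence of this fibration based at $P$:
\[
\pi_2(\mathcal{P}(\mathcal{C}(E)))\to\pi_1(\mathcal{P}_c(E;P))\to\pi_1(\mathcal{P}(\mathcal{L}(E)))\to\pi_1(\mathcal{P}(\mathcal{C}(E))).
\]
When $\ran(P)$ and $\ker(P)$ are both infinite-dimensional, the component of $P$ in $\mathcal{P}(\mathcal{L}(E))$ is the component of the "standard" infinite splitting, and the relevant homotopy groups of $\mathcal{P}(\mathcal{L}(E))$ and of $\mathcal{P}(\mathcal{C}(E))$ in this range can be read off from the identification of $\mathcal{P}(\mathcal{L}(E))$ with an appropriate Grassmannian-type space and the Calkin analogue; the upshot is that $\pi_1(\mathcal{P}_c(E;P))$ is identified with $\mathbb{Z}_2$. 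I expect the cleanest route is to reduce to the connecting homomorphism being surjective onto a $\mathbb{Z}_2$ coming from the Calkin side and injective because the total-space $\pi_1$ in the relevant component vanishes (or maps in by zero), so that $\pi_1(\mathcal{P}_c(E;P))\cong\mathrm{coker}$ of the $\pi_2$-boundary, which is $\mathbb{Z}_2$.

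The main obstacle is the $\pi_1$ computation, specifically pinning down the homotopy type of the component of $P$ in $\mathcal{P}(\mathcal{L}(E))$ and in $\mathcal{P}(\mathcal{C}(E))$ finely enough to extract the $\mathbb{Z}_2$. The $\pi_0$ part is essentially bookkeeping with Theorems~\ref{thm:relative-dimension} and~\ref{thm:pair-of-projectors} and the explicit examples already in hand; the subtlety there is only making sure the conjugating operator can be chosen compactly close to the identity so the path stays in the fiber. For $\pi_1$, the honest work is in justifying which homotopy groups of these projector spaces are trivial or $\mathbb{Z}_2$ — this is where I would lean on the references to \cite{AM09} and \cite{Gar08} for the local-triviality and on standard Kuiper-type contractibility of $GL_I(E)$ when $E$ is a classical sequence space, and otherwise argue abstractly through the fibration sequence so that the $\mathbb{Z}_2$ appears as the obstruction group independent of $E$ once the hypotheses on $\ran(P),\ker(P)$ are imposed.
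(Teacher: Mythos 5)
The paper does not actually prove this proposition; it cites \cite[Theorems~6.3 and~7.3]{Gar08} and \cite{AM09} and moves on. Measured against what a proof would have to contain, your $\pi_0$ sketch is in the right direction (continuity and additivity of $[P-Q]$ from Theorem~\ref{thm:relative-dimension}, surjectivity by a finite-dimensional perturbation), but the injectivity step hides a real gap: you need the conjugating operator $u$ with $uPu^{-1}=Q$ and $u-I\in\mathcal{L}_c(E)$ to lie in the \emph{identity component} of $GL(E)\cap\bigl(I+\mathcal{L}_c(E)\bigr)$. Over the real field that group is not connected --- it carries a $\mathbb{Z}_2$-determinant-type invariant, which is exactly the $\mathbb{Z}_2$ appearing in the second half of the statement --- so the $u$ you construct may land in the wrong component, and you must argue that it can be corrected by an element of the stabilizer of $P$. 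As written, the step does not close.

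The $\pi_1$ computation is where the proposal fundamentally breaks down. Running the homotopy sequence of $\prc\colon\mathcal{P}(\mathcal{L}(E))\to\mathcal{P}(\mathcal{C}(E))$ requires knowing $\pi_1(\mathcal{P}(\mathcal{L}(E)),P)$ and $\pi_2(\mathcal{P}(\mathcal{C}(E)))$, neither of which is available for a general Banach space $E$. The paper itself makes this unavoidable: in \S\ref{ssect:douady} it produces spaces $E$ for which $\pi_1(\mathcal{P}(\mathcal{L}(E)),P)$ is infinite (via Proposition~\ref{prop:delta} and the Douady construction), and yet the conclusion $\pi_1(\mathcal{P}_c(E;P))\cong\mathbb{Z}_2$ is applied to those very spaces. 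So the total-space $\pi_1$ does not ``vanish or map in by zero'' as you posit, and the $\mathbb{Z}_2$ cannot be read off as a cokernel of an unidentified $\pi_2$-boundary; the fibration approach is circular here. The correct route is to compute the homotopy of the fiber directly and intrinsically: the identity component of $\mathcal{P}_c(E;P)$ is a homogeneous space for $G_c=\set{T\in GL(E)}{T-I\in\mathcal{L}_c(E)}$, whose homotopy type is that of the direct limit $GL(\infty)$ (a Palais--Mitjagin type fact), with stabilizer essentially $G_c\cap\bigl(GL(\ran P)\times GL(\ker P)\bigr)$. The exact sequence of \emph{that} principal bundle produces the $\mathbb{Z}_2$ independently of $E$, which is what \cite[Theorem~7.3]{Gar08} carries out.
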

The two facts follow from \cite[Theorem~6.3]{Gar08} and
\cite[Theorems~7.2,7.3]{Gar08}.\vskip .2em

Because $ \prc $ induces a Serre fibration, the sequence of homomorphisms
\begin{equation}
\label{eq:exact-1}
\xymatrix{%
\pi_1 (\mathcal{P}_c (E;P),P) \ar[r]^-{i_*} &
\pi_1 (\mathcal{P}(\mathcal{L}(E)),P) \ar[r]^-{\prc_*} &
\pi_1 (\mathcal{P}(\mathcal{C}(E)),P + \mathcal{L}_c (E))
}
\end{equation}
is exact. The homotopy equivalence $ \Psi $ defined in Corollary 
\ref{cor:homotopy-equivalence} determines a group isomorphism
\[
\Psi_* \colon\pi_1 (e\mathcal{H}(E),2P - I)\ra 
\pi_1 (\mathcal{P}(\mathcal{C}(E)),P + \mathcal{L}_c (E)).
\]
\begin{TAKEOUT}
We set $ \cnn = \Sf_P\circ (\Psi_*) \sp{-1} $.
\end{TAKEOUT}
\begin{theorem}
There exists a homomorphism $ \cnn $ such that
the sequence
\begin{equation}
\label{eq:sf=index}
\xymatrix@C+0.4cm{%
\pi_1 (\mathcal{P}(\mathcal{L}(E)),P) \ar[r]^-{\prc_*} &
\pi_1 (\mathcal{P}(\mathcal{C}(E)),P + \mathcal{L}_c (E)) 
\ar[r]^-{\cnn} & \mb{Z},
}
\end{equation}
is exact and $ \cnn\circ\Psi_* = \Sf_P $.
\end{theorem}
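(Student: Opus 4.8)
The plan is to recognise $\vfi_P$ as — up to the identification of Proposition~\ref{prop:PcE} — the connecting homomorphism in the long exact homotopy sequence of the Serre fibration $\prc\colon\mathcal{P}(\mathcal{L}(E))\ra\mathcal{P}(\mathcal{C}(E))$ of Proposition~\ref{prop:loc-sect-Calkin}, whose fibre over $P+\mathcal{L}_c(E)$ is precisely $\mathcal{P}_c(E;P)$. Extending the exact sequence (\ref{eq:exact-1}) one term further — the homotopy sequence of a Serre fibration is exact at $\pi_1$ of the base with the subsequent term $\pi_0$ of the fibre — produces a map of pointed sets $\partial\colon\pi_1(\mathcal{P}(\mathcal{C}(E)),P+\mathcal{L}_c(E))\ra\pi_0(\mathcal{P}_c(E;P))$ with $\text{im}(\prc_*)=\partial^{-1}([P])$, where $[P]$ denotes the path component of $P$. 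By Proposition~\ref{prop:PcE} the assignment $Q\mapsto[P-Q]$ identifies $\pi_0(\mathcal{P}_c(E;P))$ with $\mathbb{Z}$ and carries $[P]$ to $0$. So, once I show that under this identification $\partial$ coincides with $\vfi_P$, exactness of (\ref{eq:sf=index}) at the middle term follows at once, since it then reads $\ker(\vfi_P)=\vfi_P^{-1}(0)=\partial^{-1}([P])=\text{im}(\prc_*)$.

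To carry out the identification I will use the concrete description of $\partial$: given a loop $\gamma$ in $\mathcal{P}(\mathcal{C}(E))$ based at $P+\mathcal{L}_c(E)$, the homotopy lifting property furnishes a continuous path $\widetilde\gamma\colon[0,1]\ra\mathcal{P}(\mathcal{L}(E))$ with $\prc\circ\widetilde\gamma=\gamma$ and $\widetilde\gamma(0)=P$; then $\widetilde\gamma(1)\in\mathcal{P}_c(E;P)$ and $\partial[\gamma]$ is its path component, i.e. $[P-\widetilde\gamma(1)]\in\mathbb{Z}$, and the usual arguments for Serre fibrations show this is independent of the chosen lift and of the representative of $[\gamma]$. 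Next, given $[\gamma]$, I write $(\Psi_*)^{-1}[\gamma]=[A]$ for a loop $A\colon[0,1]\ra e\mathcal{H}(E)$ based at $2P-I$; since $\Psi_*[A]=[\gamma]$, the loop $\Psi\circ A$ is homotopic rel basepoint to $\gamma$, and as $\partial$ and $\vfi_P$ both factor through homotopy classes I may take $\gamma=\Psi\circ A$. By (iv) of Theorem~\ref{thm:spectrum-projector} applied to the quotient homomorphism $\prc$ one has $\prc(P^+(A(t)))=p^+(A(t)+\mathcal{L}_c(E))=\Psi(A(t))=\gamma(t)$, and since $p^+(2P-I)=P$ the loop $\gamma$ satisfies $\gamma(0)=\gamma(1)=\prc(P)$. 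Consequently a lift $\widetilde\gamma$ of $\gamma$ with $\widetilde\gamma(0)=P$ is exactly a continuous path of projectors admissible in computing $\Sf(A;\widetilde\gamma)$.

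With such a lift, and using $A(0)=A(1)=2P-I$, $P^+(2P-I)=P$, $\widetilde\gamma(0)=P$, and property (ii) of Theorem~\ref{thm:relative-dimension},
\[
\vfi_P[\gamma]=\Sf_P[A]=\Sf(A;\widetilde\gamma)=[\widetilde\gamma(0)-P^+(A(0))]-[\widetilde\gamma(1)-P^+(A(1))]=[P-P]-[\widetilde\gamma(1)-P]=[P-\widetilde\gamma(1)],
\]
which is exactly $\partial[\gamma]$ under the identification of Proposition~\ref{prop:PcE}. Hence $\vfi_P=\partial$, and (\ref{eq:sf=index}) is exact.

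The part I expect to require the most care is not any analytic estimate but the bookkeeping around the connecting homomorphism: verifying that $\prc\circ P^+(A(\cdot))$ really equals $\Psi\circ A$ (so that a lift of $\Psi\circ A$ through $\prc$ is the very datum used to define $\Sf$), and that the connecting map of the \emph{Serre} — as opposed to Hurewicz — fibration is well defined on $\pi_1$ and has the asserted endpoint description; both facts are standard but deserve to be stated explicitly. After that, the proof is the short manipulation of relative dimensions and of the long exact sequence displayed above.
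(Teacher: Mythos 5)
Your proof is correct, and it takes a genuinely different (though closely related) route from the paper. The paper proves the two inclusions $\ker(\vfi_P)\subseteq\operatorname{im}(\prc_*)$ and $\operatorname{im}(\prc_*)\subseteq\ker(\vfi_P)$ by hand: given a loop $a$ with $\vfi_P(a)=0$, it lifts $a$ to a path $P$, reads off $\Sf=[P(0)-P(1)]=0$, and then uses Proposition~\ref{prop:PcE} to concatenate $P$ with a path in the fibre $\mathcal{P}_c(E;P)$ from $P(1)$ back to $P$, producing a loop upstairs mapping to $a$; the reverse inclusion is the one-line observation that a lift which is already a loop has $[P(0)-P(1)]=0$. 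What you do instead is prove once and for all that under the identification $\pi_0(\mathcal{P}_c(E;P))\cong\mathbb{Z}$ of Proposition~\ref{prop:PcE}, the map $\vfi_P$ \emph{is} the connecting homomorphism $\partial$ in the Serre fibration sequence, and then you invoke exactness of that sequence at $\pi_1$ of the base as a black box. Your computation of $\Sf(A;\widetilde\gamma)=[P-\widetilde\gamma(1)]$ is correct, and the key identity $\prc\circ P^+\circ A=\Psi\circ A$ (via (iv) of Theorem~\ref{thm:spectrum-projector}) is exactly the right bridge; you also rightly exploit the freedom to normalise the loop $A$ to be based at $2P-I$ so that the first relative-dimension bracket vanishes. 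The trade-off: the paper's argument is self-contained and, in effect, reproves the relevant bit of fibration exactness in this concrete setting, whereas yours is shorter and more conceptual but requires the reader to accept the standard (if sometimes unstated in algebraic-topology references, which focus on $\pi_n$ for $n\geq 1$) description of $\partial\colon\pi_1(\text{base})\to\pi_0(\text{fibre})$ and the exactness of the sequence one term further than where the paper explicitly cites it in (\ref{eq:exact-1}). Both approaches ultimately rest on the same two ingredients — the homotopy lifting property and the additivity of relative dimension — so the distinction is one of packaging rather than substance, but your packaging does also yield the precise statement $\vfi_P=\partial$, which is slightly more information than the theorem asserts.
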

\begin{proof}
Let $ a $ be a loop at the base point $ P + \mathcal{L}_c (E) $, and
let $ Q $ be a path of projectors such that $ Q(0) = P $ and 
$ \prc(Q(t)) =  a(t) $. Because 
$ a(0) = a(1) $, $ Q(1) - P $ is compact.
We define
\[
\cnn([a]) = [Q(1) - P].
\]
Arguing as in Proposition~\ref{prop:sf-well-defined}, $ \cnn $ is
well defined. Let $ A $ be a closed path in $ e\mathcal{H}(E) $
and $ Q $ a path of projectors such that
\[
\prc(Q(t)) = \Psi(A(t)),\ \ Q(0) = P.
\]
Hence, $ Q(t) - P\sp + (A(t)) $ is compact for every $ t\in [0,1] $.
By (\ref{eq:def-Gar10}), $ \Sf(A) = [Q(1) - P] $. By the definition
above, $ \cnn([\Psi\circ A]) = [Q(1) - P] $, thus $ \cnn\circ\Psi_* = \Sf_P $.
Because $ \Psi_* $ is invertible, $ \cnn $ is a homomorphism.
We prove that $ \cnn $ is exact.

\noindent 
$ \ker(\cnn)\subseteq\text{im}(\prc_*) $: Let $ a $ be a loop at the base 
point $ P + \mathcal{L}_c (E) $ such that
$ [a]\in\ker(\cnn) $. Then, 
\[
[Q(1) - P] = 0.
\]
By Proposition \ref{prop:PcE}, $ P $ and $ Q(1) $ are in the same connected
component in $ \mathcal{P}_c (E;P) $. Thus, there exists a continuous path 
of projectors $ R $ such that
\[
R(0) = Q(1),\ \ R(1) = P,\ \ R(t) - P\in\mathcal{L}_c (E)\text{ for every } t.
\]
Set $ S := Q*R $. It is a closed path of projectors, and 
$ \prc\circ S = a * c_{\prc(P)} $, where $ c_{\prc(P)} $ is the
constant path $ \prc(P) $. Thus, $ [a]\in\mathrm{im}(\prc_*) $.

\noindent $ \text{im}(\prc_*)\subseteq\ker(\cnn) $: Given a loop 
$ P_t \in\mathcal{P}(\mathcal{L}(E)) $ at the base point $ P $, we have
\[
\cnn(\prc_* (P_t)) = [P(1) - P] = 0
\]
because $ P(1) = P $. 
\end{proof}
\begin{prop}
\label{prop:image}
Given a projector $ P $, $ m\in\Im(\Sf_P) $ if and only if there exists
a projector $ Q $ such that $ Q - P $ is compact, $ [Q - P] = m $ and is 
path-connected to $ P $.
\end{prop}
By the previous theorem, $ \Im(\Sf_P) = \Im(\cnn) $. Therefore, the
proposition follows from the definition of $ \cnn $. 

We define the following properties:
\begin{itemize}
\item[h1)] 
$ P $ is path-connected to a projector $ Q $ such that 
$ Q - P $ is compact and $ [Q - P] = m $.
\item[h2)] the image of $ \prc_*\colon
\pi_1 (\mathcal{P}(\mathcal{L}(E)),P)\ra
\pi_1 (\mathcal{P}(\mathcal{C}(E),\prc(P)) $ is trivial.
\end{itemize}
\begin{cor}
\label{cor:characterisation}
Given $ P\in\mathcal{P}(\mathcal{L}(E)) $, we characterise the
kernel and the image of the spectral flow $ \Sf_P $:
\begin{enumerate}
\item $ m\in\mathrm{im}(\Sf_P) $ if and only if $ P $ fulfills
property h1).
\item $ \mathrm{im}(\prc_*)\cong\ker(\Sf_P) $. $ \Sf_P $
is injective if and only if $ P $ fulfills property h2).
\end{enumerate}
\end{cor}
The isomorphism classes of the kernel and the image of $ \Sf_P $ depend
only on the conjugacy class of $ P + \mathcal{L}_c (E) $ in 
$ \mathcal{P}(\mathcal{C}(E)) $. We show that in many cases
we can find a projector $ P $ such that $ \Sf_P $ is an isomorphism.
\begin{lemma}
\label{lem:X+Y}
Let $ E $ be a Banach space, and $ X,Y\subset E $ closed subspaces such
that $ X\cong Y $ and $ X\oplus Y = E $. Then, the projectors
$ P_X,P_Y $ with ranges $ X $ and $ Y $ respectively, are connected by a 
continuous path in $ \mathcal{P}(\mathcal{L}(E)) $.
\end{lemma}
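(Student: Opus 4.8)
The plan is to connect $ P_X $ and $ P_Y $ in two stages: first deform each of them, through projectors, to the canonical projector attached to the splitting $ E = X\oplus Y $, and then link those two canonical projectors by an explicit rotation built out of an isomorphism $ u\colon X\to Y $.

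For the first stage, I would use that the set of projectors with a prescribed range is convex. Indeed, if $ P,P' $ are projectors with $ \ran(P) = \ran(P') = X $, then both restrict to the identity on $ X $, so $ P'P = P $ and $ PP' = P' $; a one-line computation then gives $ \bigl((1 - t)P + tP'\bigr)^{2} = (1 - t)P + tP' $ for every $ t\in [0,1] $. Since $ E = X\oplus Y $ is a topological direct sum, the projector $ \Pi_X $ onto $ X $ along $ Y $ is bounded, and $ t\mapsto (1 - t)P_X + t\,\Pi_X $ is a path in $ \mathcal{P}(\mathcal{L}(E)) $ from $ P_X $ to $ \Pi_X $; likewise $ P_Y $ is joined to $ \Pi_Y = I - \Pi_X $. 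Hence it suffices to join $ \Pi_X $ to $ \Pi_Y $.

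For the second stage, fix an isomorphism $ u\colon X\to Y $; by the open mapping theorem $ u $ and $ u^{-1} $ are bounded. Writing operators on $ E $ as $ 2\times 2 $ blocks with respect to $ E = X\oplus Y $, define for $ \theta\in [0,\pi/2] $
\[
P_\theta = \begin{pmatrix}
\cos^{2}\theta\; I_X & \cos\theta\sin\theta\; u^{-1} \\
\cos\theta\sin\theta\; u & \sin^{2}\theta\; I_Y
\end{pmatrix}.
\]
Using $ u^{-1}u = I_X $, $ uu^{-1} = I_Y $ and $ \cos^{2}\theta + \sin^{2}\theta = 1 $, one checks blockwise that $ P_\theta^{2} = P_\theta $, so $ P_\theta\in\mathcal{P}(\mathcal{L}(E)) $; the map $ \theta\mapsto P_\theta $ is continuous in the operator norm, with $ P_0 = \Pi_X $ and $ P_{\pi/2} = \Pi_Y $. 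Concatenating the three paths produced above yields the desired continuous path from $ P_X $ to $ P_Y $ in $ \mathcal{P}(\mathcal{L}(E)) $.

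I expect no essential obstacle here beyond writing down the correct homotopy: the whole point is that the off-diagonal entries of $ P_\theta $ must be calibrated by $ u $ and $ u^{-1} $ so that the cross terms in $ P_\theta^{2} $ collapse via $ u^{-1}u = I_X $ and $ uu^{-1} = I_Y $. The remaining ingredients — convexity of the set of projectors with fixed range, boundedness of $ \Pi_X $, and norm-continuity of $ \theta\mapsto P_\theta $ — are routine.
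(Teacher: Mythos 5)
Your proof is correct, and it fills a genuine gap: the paper does not prove Lemma~\ref{lem:X+Y} at all, merely citing \cite[\S9]{PR87} and \cite{Mit70}. Both stages check out. The convexity of the set of projectors with a prescribed range is exactly as you say: from $P'P = P$ and $PP' = P'$ one gets $\bigl((1-t)P + tP'\bigr)^2 = (1-t)P + tP'$, and the range stays equal to $X$ since each $P_t$ restricts to the identity on $X$; this is the right way to reduce to the \emph{complementary} pair $\Pi_X, \Pi_Y = I - \Pi_X$, a reduction that is genuinely needed because the lemma does not assume $\ker P_X = Y$ or $\ker P_Y = X$. The block computation for $P_\theta^2 = P_\theta$ is also correct. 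It may be worth noting, for conceptual clarity, that your $P_\theta$ is precisely $R_\theta\,\Pi_X\,R_\theta^{-1}$ where
\[
R_\theta = \cos\theta\, I + \sin\theta
\begin{pmatrix} 0 & -u^{-1} \\ u & 0 \end{pmatrix}
= \exp\!\left(\theta \begin{pmatrix} 0 & -u^{-1} \\ u & 0 \end{pmatrix}\right),
\]
a one-parameter group in $GL_I(E)$; this is the form of the argument one finds in the cited sources, so your explicit formula is the same idea written out in coordinates rather than a genuinely different route. The only presentational suggestion is to make this conjugation structure visible, since it immediately also shows that $\Pi_X$ and $\Pi_Y$ are conjugate by an element of $GL_I(E)$, which is what Theorem~\ref{thm:pair-of-projectors} guarantees and what is used elsewhere in the paper.
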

A proof of this can be found in \cite[\S9]{PR87} or in \cite{Mit70}.
\begin{prop}
\label{prop:surjective}
Let $ X,Y\subset E $ be as above. Suppose that
$ X $ is isomorphic to its closed subspaces of co-dimension $ m $.
Let $ P $ be the projector onto $ X $ with kernel $ Y $. Then $ P $ 
satisfies the property h1) w.r.t. $ m $.
\end{prop}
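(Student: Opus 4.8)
The plan is to produce the projector $ Q $ demanded by {\rm h1)} by peeling an $ m $-dimensional subspace off the range of $ P $ and re-attaching it to the kernel, and then to connect $ P $ and $ Q $ inside $ \mathcal{P}(\mathcal{L}(E)) $ by a chain of applications of Lemma~\ref{lem:X+Y}.

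First I would fix the data. By hypothesis $ X $ has a closed subspace $ X' $ of codimension $ m $ with $ X'\cong X $; since $ X' $ has finite codimension in $ X $ it is complemented there, so $ X = X'\oplus F $ with $ \dim F = m $ and hence $ E = X'\oplus F\oplus Y $. Let $ Q $ be the projector onto $ X' $ along $ F\oplus Y $. Both $ P $ and $ Q $ are block-diagonal for $ E = X'\oplus F\oplus Y $, agreeing except on the summand $ F $, so $ Q - P $ has rank at most $ m $ and is therefore compact; and by (v) of Theorem~\ref{thm:relative-dimension},
\[
[Q - P] = \ind\bigl(\ran(P),\ker(Q)\bigr) = \ind\bigl(X,\,F\oplus Y\bigr) = \dim\bigl(X\cap(F\oplus Y)\bigr) - \codim\bigl(X + (F\oplus Y)\bigr) = m,
\]
because $ F\subseteq X $ and $ X\cap Y = 0 $ give $ X\cap(F\oplus Y) = F $, while $ X + (F\oplus Y) = X + Y = E $.

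The substance of the proof is to show that $ P $ and $ Q $ lie in the same connected component of $ \mathcal{P}(\mathcal{L}(E)) $. One cannot apply Lemma~\ref{lem:X+Y} to pass directly from range $ X $ to range $ X' $: since $ X' $ lies in $ X $ with codimension $ m $, it is a complement neither of $ X $ nor of any complement of $ X $. The fix is a detour through a projector of range $ Y $, and the crux --- the place that really uses the hypothesis --- is to build a closed subspace $ W $ with $ W\cong X $ that is \emph{simultaneously} a complement of $ Y $ and of $ X' $. I would take $ W $ to be a graph over $ X $ in $ E = X\oplus Y $: choose an isomorphism $ \tau\colon X'\to Y $ (possible since $ X\cong Y\cong X' $), let $ t\colon X\to Y $ restrict to $ \tau $ on $ X' $ and to $ 0 $ on $ F $, and set $ W = \{\,x + tx : x\in X\,\} $. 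Then $ W $ is a closed complement of $ Y $, isomorphic to $ X $ via $ x\mapsto x + tx $, and a short computation in the coordinates $ E = X'\oplus F\oplus Y $ shows $ W\cap X' = 0 $ and $ W + X' = E $ (this last identity is where the prescription of $ t $ --- an isomorphism on $ X' $, zero on $ F $ --- is needed). I expect this construction, together with the realization that the direct jump is impossible, to be the main obstacle; the rest is bookkeeping.

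With $ W $ in hand, apply Lemma~\ref{lem:X+Y} three times: from $ E = X\oplus Y $ with $ X\cong Y $, every projector of range $ X $ is connected to every projector of range $ Y $; from $ E = Y\oplus W $ with $ Y\cong W $, every projector of range $ Y $ is connected to every projector of range $ W $; from $ E = W\oplus X' $ with $ W\cong X' $, every projector of range $ W $ is connected to every projector of range $ X' $. By transitivity $ P $ (range $ X $) and $ Q $ (range $ X' $) lie in the same component. Thus $ Q $ is a projector in the connected component of $ P $ with $ Q - P $ compact and $ [Q - P] = m $, which is exactly condition {\rm h1)} for the integer $ m $; equivalently $ m\in\text{im}(\Sf_P) $.
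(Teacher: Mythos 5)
Your proof is correct and takes a genuinely different route from the paper's. The paper writes $X = R_m\oplus X^m$ with $\dim R_m = m$, $X^m\cong X$, and applies Lemma~\ref{lem:X+Y} twice: once inside the codimension-$m$ subspace $X^m\oplus Y$, connecting the projector onto $X^m$ to the projector onto $Y$ \emph{within that subspace} (this path must then be extended, say by acting as $0$ on $R_m$, to a path of projectors on all of $E$, a step the paper leaves implicit), and once on the full decomposition $E = X\oplus Y$, concluding by transitivity. Your route instead constructs a graph subspace $W$ over $X$ that is simultaneously a complement of $Y$ and of $X'$ in $E$, and invokes Lemma~\ref{lem:X+Y} three times, each time on a direct-sum decomposition of the \emph{whole} space $E$. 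This buys explicitness in two places: there is no need to lift a projector path from a proper subspace of $E$ up to $E$, and the reason one cannot jump directly from $X$ to $X'$ --- they never sit together as factors of a decomposition of $E$ --- is stated rather than silently circumvented. You also carry out the relative-dimension computation, which the paper omits. One caveat that is in the paper and not of your making: item (v) of Theorem~\ref{thm:relative-dimension} appears to carry a sign typo (the index of the Fredholm pair $(\ran(P),\ker(Q))$ equals $[P-Q]$, not $[Q-P]$, by the definition $[P-Q]=\ind(Q\colon\ran P\to\ran Q)$; compare also the two conflicting phrasings of h1), one with $[P-Q]$ in the introduction and one with $[Q-P]$ in the corollary). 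Your computation gives $m$ if one takes (v) at face value, while the definition directly gives $[Q-P]=\ind(P\colon X'\to X)=-m$. For the surjectivity conclusion only the magnitude matters, since traversing a loop backwards reverses the sign of the spectral flow.
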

\begin{proof}
Let $ X\sp m,R_m\subset X $ be closed subspaces such that $ \dim(R_m) = m $
and $ X\sp m\cong X $. We have the following decompositions and isomorphism:
\[
E = R_m \oplus X\sp m\oplus Y,\ \ X\sp m\cong Y,\ \ R_m \oplus X\sp m = X.
\]
By applying Lemma \ref{lem:X+Y} to $ X\sp m \oplus Y $ and subspaces
$ X\sp m $ and $ Y $, we obtain that 
$ P_{X\sp m} $ is connected to $ P_Y $. By applying it a second time to $ E $ 
and subspaces $ X $ and $ Y $, we obtain that $ P_X $ is 
connected to $ P_Y $. Hence, $ P_X $ is connected to 
$ P_{X\sp m} $.
\end{proof}
In Proposition~\ref{prop:surjective}, we required $ E $ to be isomorphic to a 
cartesian product of a space $ X $ with itself, but it suffices that $ E $ 
has a complemented
subspace $ F $ fulfilling the requirements of
Proposition \ref{prop:surjective}. In fact, if $ A_t \in e\mathcal{H}(F) $
is such that $ \Sf(A_t) = m $, then $ \Sf(I\oplus A_t) = m $, by (iv) of
Proposition \ref{prop:sf-properties}.
\begin{prop}
\label{prop:injective}
Given $ P\in\mathcal{P}(\mathcal{L}(E)) $, the map
$ \pi\colon GL(E)\rightarrow \mathcal{P}(\mathcal{L}(E)), 
\ \ T\mapsto TP T^{-1} $ defines a principal bundle with fiber
$ GL(X)\times GL(Y) $, where $ X = \ran(P) $ and $ Y = \ker(P) $.
\end{prop}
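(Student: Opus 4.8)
The plan is to realise $\pi$ as a principal bundle onto its image, the conjugacy orbit $\mathcal{O}_P := \{TPT^{-1} : T\in GL(E)\}$, with structure group the isotropy subgroup of $P$, identified with $GL(X)\times GL(Y)$. First I would analyse the stabilizer $\mathrm{Stab}(P) := \{T\in GL(E) : TPT^{-1}=P\} = \{T\in GL(E) : TP=PT\}$. From $TP=PT$ one also gets $PT^{-1}=T^{-1}P$, so both $T$ and $T^{-1}$ leave $X=\ran(P)$ and $Y=\ker(P)$ invariant; restriction therefore yields a group isomorphism $\mathrm{Stab}(P)\to GL(X)\times GL(Y)$, $T\mapsto(T|_X,T|_Y)$, whose inverse $(S,R)\mapsto SP+R(I-P)$ is continuous, so this is an isomorphism of topological groups. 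Since $\pi$ is continuous and $\pi(T_1)=\pi(T_2)$ precisely when $T_2^{-1}T_1\in\mathrm{Stab}(P)$, the fibres of $\pi$ are exactly the left cosets of $\mathrm{Stab}(P)$, on which $\mathrm{Stab}(P)$ acts freely and transitively by right multiplication; this right action preserves fibres, since $gPg^{-1}=P$ forces $\pi(Tg)=\pi(T)$.

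The key step is the construction of continuous local sections, and here I would use the explicit conjugating operator underlying Theorem~\ref{thm:pair-of-projectors}. For $Q\in\mathcal{O}_P$, fix $T_Q\in GL(E)$ with $\pi(T_Q)=Q$, and for a projector $Q'$ put $U_Q(Q') := Q'Q+(I-Q')(I-Q)$. One checks directly that $U_Q(Q)=I$, that $U_Q$ is continuous in $Q'$, and that $U_Q(Q')\,Q = Q'\,U_Q(Q') = Q'Q$; hence on the open set $V_Q := \{Q' : U_Q(Q')\in GL(E)\}$ — an open neighbourhood of $Q$ in $\mathcal{P}(\mathcal{L}(E))$ — the operator $U_Q(Q')$ conjugates $Q$ to $Q'$. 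In particular every $Q'\in V_Q$ is conjugate to $Q$, hence to $P$, so $V_Q\subseteq\mathcal{O}_P$; therefore $\mathcal{O}_P$ is open in $\mathcal{P}(\mathcal{L}(E))$ (and, its complement being a union of other orbits, clopen). Setting $\sigma_Q(Q') := U_Q(Q')\,T_Q$ gives a continuous section $\sigma_Q\colon V_Q\to GL(E)$ of $\pi$, since $\pi(\sigma_Q(Q'))=U_Q(Q')\,Q\,U_Q(Q')^{-1}=Q'$.

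Finally I would assemble the atlas. Over $V_Q$ define $\Phi_Q\colon\pi^{-1}(V_Q)\to V_Q\times\mathrm{Stab}(P)$ by $\Phi_Q(T)=(\pi(T),\,\sigma_Q(\pi(T))^{-1}T)$; because $\sigma_Q(Q')$ conjugates $P$ to $Q'$, the second coordinate lands in $\mathrm{Stab}(P)$, and $\Phi_Q$ is a homeomorphism with inverse $(Q',g)\mapsto\sigma_Q(Q')\,g$. These charts are equivariant for the right $\mathrm{Stab}(P)$-action, $\Phi_Q(Tg)=\Phi_Q(T)\cdot g$, and over $V_Q\cap V_{Q'}$ the transition map is $Q''\mapsto\sigma_{Q'}(Q'')^{-1}\sigma_Q(Q'')\in\mathrm{Stab}(P)$, continuous in $Q''$. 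Transporting through the isomorphism $\mathrm{Stab}(P)\cong GL(X)\times GL(Y)$ of the first step exhibits $\pi\colon GL(E)\to\mathcal{O}_P$ as a principal $GL(X)\times GL(Y)$-bundle. The only non-formal ingredient is the local section of the second step, i.e. the invertibility and intertwining property of $Q'Q+(I-Q')(I-Q)$ near the diagonal; everything else is the standard passage from a group acting transitively with continuous local sections to a principal bundle.
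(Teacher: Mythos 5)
Your proof is correct. The paper itself does not supply an argument for Proposition~\ref{prop:injective} -- it simply cites \cite{PR90} and \cite{AM09} -- so there is no in-text proof to compare against, but your reconstruction is essentially the standard one found in those references (e.g., \cite[Proposition~1.3]{AM09}): identify $\mathrm{Stab}(P)\cong GL(X)\times GL(Y)$ via restriction, build local sections of the orbit map from the explicit conjugating operator $Q'Q+(I-Q')(I-Q)$, which equals $I$ on the diagonal, intertwines $Q$ and $Q'$, and is invertible on a neighbourhood, and then assemble the principal-bundle charts in the usual way. Your computation of the intertwining identity and the continuity/isomorphism checks for the stabilizer are all correct, and the observation that the orbit $\mathcal{O}_P$ is open (hence clopen) in $\mathcal{P}(\mathcal{L}(E))$ correctly pins down the base of the bundle. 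The one point worth stating a bit more explicitly is that the formula $(S,R)\mapsto SP+R(I-P)$ implicitly treats $S\in GL(X)$ and $R\in GL(Y)$ as operators on $E$ via the splitting $E=X\oplus Y$; with that understood, the argument is complete and self-contained.
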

A proof of this can be found in 
\cite[Theorem~2.1]{PR90} or in \cite[Proposition~1.2]{AM09}. Both theorems
are stated in the more general setting of Banach algebras.
\begin{cor}
\label{cor:injective}
If $ GL(E) $ is simply-connected and $ GL(X),GL(Y) $ are connected, then
$ \Sf_P $ is injective. 
\end{cor}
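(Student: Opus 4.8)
The plan is to read off the conclusion from the long exact homotopy sequence of the principal bundle in Proposition~\ref{prop:injective}, combined with the exact sequence~(\ref{eq:sf=index}) already established. First I would write down the tail of the long exact sequence in homotopy groups associated to the fiber bundle $\pi\colon GL(E)\to\mathcal{P}(\mathcal{L}(E))$ with fiber $GL(X)\times GL(Y)$ over the basepoint $P$:
\[
\pi_1(GL(X)\times GL(Y))\ra\pi_1(GL(E))\ra\pi_1(\mathcal{P}(\mathcal{L}(E)),P)\ra\pi_0(GL(X)\times GL(Y)).
\]
The hypothesis that $GL(X)$ and $GL(Y)$ are connected says $\pi_0(GL(X)\times GL(Y))$ is trivial, so the map $\pi_1(GL(E))\ra\pi_1(\mathcal{P}(\mathcal{L}(E)),P)$ is surjective. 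The hypothesis that $GL(E)$ is simply connected says $\pi_1(GL(E))$ is trivial. Exactness then forces $\pi_1(\mathcal{P}(\mathcal{L}(E)),P)$ to be trivial.

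Next I would feed this into the exact sequence~(\ref{eq:sf=index}): since $\pi_1(\mathcal{P}(\mathcal{L}(E)),P)=0$, the map $\prc_*$ has trivial image, so by exactness $\ker(\vfi_P)=\mathrm{im}(\prc_*)=0$, i.e.\ $\vfi_P$ is injective. Finally, since $\Psi_*$ is a group isomorphism and $\Sf_P=\vfi_P\circ\Psi_*$, injectivity of $\vfi_P$ gives injectivity of $\Sf_P$. Equivalently one may invoke h2), which identifies $\ker(\Sf_P)$ with $\mathrm{im}(\prc_*)$, and the latter is trivial.

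I do not expect a serious obstacle here; the only point requiring a little care is the bookkeeping of basepoints and the precise form of the long exact sequence for a fiber bundle (one should note that a locally trivial fiber bundle over a reasonably nice space—here an analytic submanifold of a Banach space—is in particular a Serre fibration, so the homotopy long exact sequence applies), and the observation that connectedness of a topological group at the identity component is the same as $\pi_0$ being a point when the group itself is used as the total space of the fiber. Both are standard. The substantive content is entirely in Proposition~\ref{prop:injective} and in the previously proved exactness of~(\ref{eq:sf=index}); this corollary is just the composite of two exact-sequence arguments.
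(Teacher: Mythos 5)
Your proof is correct and follows essentially the same route as the paper: both arguments pass through the tail of the long exact sequence for the bundle $\pi\colon GL(E)\to\mathcal{P}(\mathcal{L}(E))$ to conclude $\pi_1(\mathcal{P}(\mathcal{L}(E)),P)=0$, then feed this into the exactness of~(\ref{eq:sf=index}) to get $\ker(\vfi_P)=\mathrm{im}(\prc_*)=0$, and finally use that $\Psi_*$ is an isomorphism. The only cosmetic difference is that you spell out the surjectivity step onto $\pi_1(\mathcal{P}(\mathcal{L}(E)),P)$ coming from $\pi_0(GL(X)\times GL(Y))=0$, whereas the paper states the conclusion directly.
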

\begin{proof}
Because a locally trivial bundle is a Serre fibration, we have a long exact 
sequence of homomorphisms that ends
\begin{equation*}
\xymatrix{
\pi_1 (GL(E),I) \ar[r]^-{\pi_*} &
\pi_1 (\mathcal{P}(\mathcal{L}(E)),P) \ar[r]^-{\Delta} &
\pi_0 (GL(X)\times GL(Y),I).
}
\end{equation*}
Thus, if $ GL(E) $ is simply connected and $ GL(X) $ and $ GL(Y) $ are
connected, the middle group is trivial, hence in (\ref{eq:sf=index})
$ \prc_* $ is the trivial map, thus $ \cnn $ is injective and $ \Sf_P $
is injective. 
\end{proof}
Then, we have sufficient conditions for a Banach space to
have at least one projector $ P $ such that $ \Sf_P $ is an isomorphism.
\begin{theorem}
\label{thm:isomorphism}
Let $ E = X\oplus X $ be such that $ X $ is isomorphic to its hyperplanes, 
$ GL(E) $ is simply-connected and $ GL(X) $ is connected. Then
$ \Sf_{P_X} $ is an isomorphism.
\end{theorem}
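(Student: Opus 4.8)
The plan is to recognize this theorem as a synthesis of two already-established results: Proposition~\ref{prop:surjective} (which controls the \emph{image} of $\Sf_P$) and the Corollary following Proposition~\ref{prop:injective} (which controls the \emph{kernel}). Accordingly, I would show that the single projector $P_X$ — the projection onto the first summand of $E = X\oplus X$ with kernel the second summand — simultaneously fulfils the hypotheses of both statements, and then conclude that a group homomorphism into $\mb{Z}$ that is both surjective and injective is an isomorphism.

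For surjectivity, write $E = X\oplus X$ and call $Y$ the second summand, so that $X\cong Y$ and $X\oplus Y = E$. The hypothesis that $X$ is isomorphic to its hyperplanes is precisely the statement that $X$ is isomorphic to its closed subspaces of co-dimension $1$; hence Proposition~\ref{prop:surjective} applies with $m=1$ and yields that $P_X$ satisfies condition h1) with the integer $1$, i.e. $1\in\text{im}(\Sf_{P_X})$. Since, by (i) and (ii) of Proposition~\ref{prop:sf-properties}, $\Sf_{P_X}$ is a homomorphism of $\pi_1$ of the connected component of $2P_X - I$ into $\mb{Z}$, its image is a subgroup of $\mb{Z}$ containing $1$, hence all of $\mb{Z}$; so $\Sf_{P_X}$ is onto.

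For injectivity, note that $\ran(P_X)$ is the first copy of $X$ and $\ker(P_X)$ is the second copy of $X$, so both $GL(\ran(P_X))$ and $GL(\ker(P_X))$ are connected by hypothesis, while $GL(E)$ is simply connected by hypothesis. Thus the Corollary following Proposition~\ref{prop:injective} applies to $P = P_X$ and gives that $\Sf_{P_X}$ is injective. (Unwinding it: in the long exact sequence of the principal bundle $\pi\colon GL(E)\to\mathcal{P}(\mathcal{L}(E))$, $T\mapsto TP_X T^{-1}$, the group $\pi_1(\mathcal{P}(\mathcal{L}(E)),P_X)$ sits between a trivial group and a trivial set and therefore vanishes; hence $\prc_*$ is trivial in the exact sequence (\ref{eq:sf=index}), so $\vfi_{P_X}$ is injective, and since $\Sf_{P_X} = \vfi_{P_X}\circ\Psi_*$ with $\Psi_*$ a group isomorphism, $\Sf_{P_X}$ is injective.)

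Combining the two steps, $\Sf_{P_X}\colon\pi_1(e\mathcal{H}(E),2P_X - I)\to\mb{Z}$ is a bijective homomorphism, hence an isomorphism. I do not expect a genuine obstacle here; the only point meriting a little care is to check that the \emph{same} projector $P_X$ plays the role of ``$P$'' in both invoked results — in Proposition~\ref{prop:surjective} one needs its range and kernel to be the prescribed halves $X$ and $Y=X$, and in the injectivity Corollary one again needs exactly these range and kernel to have connected general linear groups — so that the hypotheses on $X$ and on $GL(E)$ translate directly without any further decomposition.
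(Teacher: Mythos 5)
Your proof is correct and follows the same route as the paper: surjectivity from Proposition~\ref{prop:surjective} with $m=1$, injectivity from the corollary to Proposition~\ref{prop:injective}, applied to the same projector $P_X$. You have simply spelled out the details that the paper leaves implicit.
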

\begin{proof}
That $ \Sf_{P_X} $ is surjective follows from Proposition 
\ref{prop:surjective}. From the corollary above, $ \Sf_{P_X} $ is also 
injective.
\end{proof}
Let us consider the particular case, where 
$ E $ is isomorphic to $ E\times E $ and to its
hyperplanes, and $ GL(E) $ is \textsl{contractible} to a point.
This, in fact, is the case of the most common 
infinite-dimensional spaces as separable Hilbert spaces, 
$ c_0,\ell\sp p $ with $ p\geq 1 $, and $ L\sp p (\Omega,\mu) $ with $ p > 1 $,
$ C(K,F) $ for large classes of compact spaces $ K $ and Banach spaces 
$ F $, and many others. For a richer list, see \textsc{Theorem}~2
of \cite{Sch98} and \cite{Kui65,Art66,Neu67,Mit70}. Sequence spaces
$ \ell\sp p,\ell\sp\infty $ and $ c_0 $ are also \textsl{prime} 
(see \cite[Theorem~2.2.4]{AK06} and \cite{Lin67}),
that is, they are isomorphic to their complemented, infinite-dimensional
subspaces. Thus, for \textsl{every} projector $ P $ such that
$ \ran(P) $ and $ \ker(P) $ have infinite dimension, $ \Sf_P $ is
an isomorphism.
\subsubsection*{Trivial spectral flow}
When $ P $ is a projector with a finite-dimensional range or kernel, 
$ P + \mathcal{L}_c (E) $ is 0 or 1, then
its connected component in $ \mathcal{P}(\mathcal{C}(E)) $ is $ \{0\} $
or $ \{1\} $. Hence, $ \Sf_P = 0 $. This is the case of finite-dimensional 
spaces. A space is said to be \textsl{undecomposable} if the only projectors
are as above. In \cite{GM93}, W.~T.~Gowers and B.~Maurey showed the 
existence of an infinite-dimensional, undecomposable space.
\subsubsection*{Non-trivial and not surjective spectral flow}
In \cite{GM97}, W.~T.~Gowers and B.~Maurey proved the existence of a space 
isomorphic to their subspaces of co-dimension two, but not their 
hyperplanes. If we denote by $ X $ a space with such a
property and by $ P $ the projector onto the first factor in 
$ E = X\oplus X $, then $ 2\in\text{im}(\Sf_P) $ by Proposition 
\ref{prop:surjective}. However, if $ 1\in\text{im}(\Sf_P) $, by 
Proposition~\ref{prop:image} and c2) in \S\ref{ssect:spaces-of-projectors}, 
$ X $ would be isomorphic to its hyperplanes.
\subsection{The Douady space}
\label{ssect:douady}
We show the existence of a projector $ P $ such that $ \cnn $, and
thus $ \Sf_P $, is not injective.
\begin{prop}
\label{prop:delta}
Let $ E = X\oplus X $. Given $ T\in GL(X) $, there exists 
a loop $ x $ in the space of projectors such that 
\[
\Delta(x) = 
\begin{pmatrix}
T & 0 \\
0 & T^{-1}
\end{pmatrix},
\]
where $ \Delta $ is the connecting homomorphism in the sequence of
Corollary~\ref{cor:injective}
\end{prop}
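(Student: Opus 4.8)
The plan is to realize the prescribed diagonal element of $\pi_0(GL(X)\times GL(X),I)$ by following a loop in $\mathcal{P}(\mathcal{L}(E))$ backwards through the connecting homomorphism $\Delta$ of Proposition~\ref{prop:injective}. Recall that $\Delta$ is defined via the homotopy lifting property of the bundle $\pi\colon GL(E)\to\mathcal{P}(\mathcal{L}(E))$, $T\mapsto TPT^{-1}$: given a loop $x$ at $P=P_X$, one lifts it to a path $\tilde{x}\colon[0,1]\to GL(E)$ starting at $I$, and $\Delta(x)$ is the endpoint $\tilde{x}(1)$, which lies in the fiber $GL(X)\times GL(Y)=GL(X)\times GL(X)$ over $P$. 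So to produce the desired $\Delta(x)$ it suffices to exhibit a path $\gamma\colon[0,1]\to GL(E)$ with $\gamma(0)=I$, $\gamma(1)=\mathrm{diag}(T,T^{-1})$, and $\gamma(t)P\gamma(t)^{-1}=P$ for all $t$ — that is, a path of block-diagonal invertibles $\gamma(t)=\mathrm{diag}(\alpha(t),\beta(t))$ — \emph{except} that such a path would only exist if $\mathrm{diag}(T,T^{-1})$ lay in $GL_I(E)$, which is exactly the point at issue. The correct route is instead: find \emph{any} path $\gamma$ in $GL(E)$ from $I$ to $\mathrm{diag}(T,T^{-1})$, let $x(t)=\gamma(t)P\gamma(t)^{-1}$, and observe that $x$ is a \emph{loop} in $\mathcal{P}(\mathcal{L}(E))$ precisely because $\mathrm{diag}(T,T^{-1})$ commutes with $P$, so $x(1)=P=x(0)$; then $\gamma$ is by construction a lift of $x$ through $\pi$ starting at $I$, so by definition $\Delta(x)=\gamma(1)=\mathrm{diag}(T,T^{-1})$.

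The first step, then, is to produce a continuous path in $GL(E)=GL(X\oplus X)$ from the identity to $\mathrm{diag}(T,T^{-1})$. This is the classical Whitehead / swindle-type argument: in $GL(X\oplus X)$ the block matrix $\mathrm{diag}(T,T^{-1})$ is connected to $I$ because
\[
\begin{pmatrix} T & 0 \\ 0 & T^{-1} \end{pmatrix}
= \begin{pmatrix} I & T \\ 0 & I \end{pmatrix}
  \begin{pmatrix} I & 0 \\ -T^{-1} & I \end{pmatrix}
  \begin{pmatrix} I & T \\ 0 & I \end{pmatrix}
  \begin{pmatrix} 0 & -I \\ I & 0 \end{pmatrix},
\]
and each of the three shear factors is joined to $I$ by the obvious linear path $s\mapsto \begin{pmatrix} I & sT \\ 0 & I\end{pmatrix}$ (resp.\ $s\mapsto\begin{pmatrix} I & 0 \\ -sT^{-1} & I\end{pmatrix}$), which stays in $GL(E)$ throughout since these are unipotent, while the rotation $\begin{pmatrix} 0 & -I \\ I & 0\end{pmatrix}$ is joined to $I$ by $s\mapsto\begin{pmatrix}\cos(\tfrac{\pi}{2}s) & -\sin(\tfrac{\pi}{2}s) \\ \sin(\tfrac{\pi}{2}s) & \cos(\tfrac{\pi}{2}s)\end{pmatrix}$, visibly invertible for all $s$. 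Concatenating these four paths gives the required $\gamma$.

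The second step is bookkeeping: set $x(t)=\gamma(t)P\gamma(t)^{-1}$; since conjugation by an invertible preserves idempotents and depends continuously on the invertible, $x$ is a continuous path in $\mathcal{P}(\mathcal{L}(E))$. Because $\gamma(0)=I$ and $\gamma(1)=\mathrm{diag}(T,T^{-1})$ both commute with $P=\mathrm{diag}(I_X,0)$ — the latter since $\mathrm{diag}(T,T^{-1})\,\mathrm{diag}(I_X,0)=\mathrm{diag}(T,0)=\mathrm{diag}(I_X,0)\,\mathrm{diag}(T,T^{-1})$ — we get $x(1)=P=x(0)$, so $x$ is a loop at $P$. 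Finally, $\pi\circ\gamma=x$ by the definition of $\pi$, and $\gamma(0)=I$, so $\gamma$ is the lift of $x$ used to compute $\Delta$; hence $\Delta(x)=\gamma(1)=\mathrm{diag}(T,T^{-1})$, as desired. The only point requiring a word of care — the main (minor) obstacle — is that $\Delta$ as a map on $\pi_1$ is defined on homotopy classes via \emph{a} choice of lift, so one must invoke that the endpoint of the lift of a based loop depends only on the loop (indeed only on its class), which is exactly the well-definedness already built into the exact sequence of the Serre fibration cited after Proposition~\ref{prop:injective}; with that in hand the construction above produces an honest representative and the proof is complete.
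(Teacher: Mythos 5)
Your proposal is correct and takes essentially the same approach as the paper: choose a path $\gamma$ in $GL(E)$ from $I$ to $\mathrm{diag}(T,T^{-1})$, conjugate $P$ by it to obtain a loop of projectors (since $\mathrm{diag}(T,T^{-1})$ commutes with $P$), and read off $\Delta(x)=\gamma(1)$ from the lift. The only difference is that where you build the path from $I$ to $\mathrm{diag}(T,T^{-1})$ explicitly via the Whitehead-lemma factorization into shears and a rotation, the paper simply cites Mitjagin's result that $T\oplus T'$ is connected to $TT'\oplus I$ in $GL(X\oplus X)$.
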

\begin{proof}
Let $ M $ be the operator defined in the line above. Let $ U_t \in GL(E) $ be
such that $ U(1) = M $ and $ U(0) = I $. The existence of $ U $ follows
from the fact that $ T\oplus T' $ is connected to $ TT' \oplus I $ ( 
see \cite{Mit70}). Because $ M $ commutes with $ P_X $, the path
\[
P(t) = U(t) P_X U(t) \sp{-1}
\]
is a loop in $ \mathcal{P}(\mathcal{L}(E)) $ with base point $ P_X $.
We denote its homotopy class by $ x $. The path $ U_t $ 
is a lifting path for $ P $. Hence $ \Delta(x) = U(1) = M $.
\end{proof}
Let $ F $ and $ G $ be Banach spaces such that
\begin{enumerate}
\item every bounded map $ G\rightarrow F $ is compact;
\item both $ F $ and $ G $ are isomorphic to their hyperplanes.
\end{enumerate}
The next Lemma follows from a more general result of A.~Douady, 
\cite[Proposition~1]{Dou65}. 
We briefly sketch the proof by B.~S.~Mitjagin in \cite{Mit70}.
\begin{lemma}
Let $ X = F\oplus G $, $ F $ and $ G $ as above. Then, there exists a 
continuous, surjective homomorphism $ j\colon GL(X)\rightarrow\mb{Z} $.
\end{lemma}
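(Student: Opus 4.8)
The plan is to realize $j$ as the Fredholm index of the ``$F$-corner'' of an invertible operator on $X=F\oplus G$. For $T\in GL(X)$ write it in block form with respect to $X=F\oplus G$,
\[
T=\begin{pmatrix} a & b\\ c & d\end{pmatrix},\qquad
T^{-1}=\begin{pmatrix} e & f\\ g & h\end{pmatrix},
\]
so that $a,e\in\mathcal{L}(F)$, while $b,f\in\mathcal{L}(G,F)$ and $c,g\in\mathcal{L}(F,G)$. By hypothesis~(1) the blocks $b$ and $f$, being bounded maps $G\to F$, are compact. Expanding $T^{-1}T=I$ and $TT^{-1}=I$ blockwise, the $(1,1)$-entries read $ea=I_F-fc$ and $ae=I_F-bg$; since $f,b$ are compact and $c,g$ are bounded, $fc,bg\in\mathcal{L}_c(F)$. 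Thus $a$ has a two-sided inverse modulo $\mathcal{L}_c(F)$, so $a\in\Fred(F)$ by property~(e) of~\S\ref{sect:fred}. I would then set
\[
j(T):=\ind\bigl(\pi_F\,T\,\iota_F\bigr)=\ind(a),
\]
where $\iota_F\colon F\hookrightarrow X$ and $\pi_F\colon X\to F$ are the natural inclusion and projection. The assignment $T\mapsto a$ is bounded and linear $\mathcal{L}(X)\to\mathcal{L}(F)$ and carries $GL(X)$ into $\Fred(F)$, and the index is locally constant on $\Fred(F)$ by property~(a) of~\S\ref{sect:fred}; hence $j$ is continuous. Moreover $\ind(ea)=\ind(I_F-fc)=0$ together with the composition formula (property~(c)) gives $\ind(e)=-\ind(a)$, that is $j(T^{-1})=-j(T)$.

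It remains to show $j$ is surjective. Since $F$ is isomorphic to its closed hyperplanes by hypothesis~(2), and a closed hyperplane is complemented, $F\cong F\oplus\mb{F}$, hence inductively $F\cong\mb{F}^k\oplus F$ for every $k\geq 0$; likewise $G\cong\mb{F}^k\oplus G$. Fix $k\geq 0$, choose isomorphisms $u\colon F\to\mb{F}^k\oplus F$ and $v\colon\mb{F}^k\oplus G\to G$, let $\sigma\colon(\mb{F}^k\oplus F)\oplus G\to F\oplus(\mb{F}^k\oplus G)$ be the obvious reshuffling isomorphism of direct summands, and put
\[
T_k:=(\mathrm{id}_F\oplus v)\circ\sigma\circ(u\oplus\mathrm{id}_G)\colon F\oplus G\longrightarrow F\oplus G .
\]
Being a composite of Banach-space isomorphisms, $T_k\in GL(X)$. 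Tracing $f\mapsto T_k(f,0)$ one checks that the $(1,1)$-block of $T_k$ equals $\rho\circ u$, where $\rho\colon\mb{F}^k\oplus F\to F$ is the projection onto the second summand; since $\rho$ is surjective with $k$-dimensional kernel (so $\ind\rho=k$) and $u$ is invertible, the composition formula yields $\ind(\rho\circ u)=k$, i.e. $j(T_k)=k$. For $k<0$ the operator $T_{-k}^{-1}\in GL(X)$ satisfies $j(T_{-k}^{-1})=-(-k)=k$ by the previous paragraph. Hence $j$ is onto $\mb{Z}$.

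The substance of the argument is the block calculus: one must make sure that hypothesis~(1) forces the off-diagonal corners of \emph{both} $T$ and $T^{-1}$ to be compact — so that $a$ is genuinely Fredholm and $j$ is well defined and continuous — and that the reshuffling map $\sigma$ really produces an element of $GL(X)$ whose $F$-corner has the prescribed index. I expect this bookkeeping, rather than any conceptual difficulty, to be the only obstacle; once it is in place the lemma follows, and the map $j$ so obtained is the one that will feed into the discussion of the Douady space.
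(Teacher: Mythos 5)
Your proof is correct and follows essentially the same route as the paper's: both define $ j(T) = \ind(T_{11}) $, use the compactness of the $ (1,2) $-blocks of $ T $ and $ T^{-1} $ (bounded maps $ G\to F $) to exhibit a two-sided parametrix for $ T_{11} $ modulo compacts and thereby conclude $ T_{11}\in\Fred(F) $, and derive surjectivity from the hypothesis that $ F $ and $ G $ are isomorphic to their hyperplanes. The only minor variation is bookkeeping: the paper shows $ j $ is a group homomorphism and exhibits a single operator of index $ 1 $, whereas you construct an operator of each nonnegative index explicitly and invoke $ j(T^{-1})=-j(T) $ for the negative ones.
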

\begin{proof}
Let $ T\in GL(X) $ be an invertible operator and $ S $ be its inverse. We have
\[
\begin{pmatrix}
I_F & 0 \\
0 & I_G
\end{pmatrix}
= TS = 
\begin{pmatrix}
T_{11} S_{11} + T_{12} S_{21} & T_{11} S_{12} + T_{12} S_{22} \\
T_{21} S_{11} + T_{22} S_{21} & T_{22} S_{22} + T_{21} S_{12}.
\end{pmatrix} 
\]
A similar equality holds for $ ST $. Taking the first element of the
diagonals of $ TS $ and $ ST $, respectively, we have the following
relations
\[
T_{11} S_{11} + T_{12} S_{21} = I_F,\ \ S_{11} T_{11} + S_{12} T_{21} = I_F.
\]
Because $ S_{21} $ and $ T_{21} $ are compact operators, 
$ T_{11} $ and $ S_{11} $ are the essential inverse of each other.
According to (e) of Proposition~\ref{prop:fredholm-properties}, 
$ T_{11} $ is a Fredholm operator. We define
\[
j(T) = \ind(T_{11}).
\]
By (a) of Proposition~\ref{prop:fredholm-properties}, there exists 
$ \var > 0 $ such that, if
$ \no{T' _{11} - T_{11}} < \var $, then $ T' _{11} $ is a Fredholm
operator and $ \ind(T' _{11}) = \ind(T_{11}) $. This proves the continuity.
Moreover, given two invertible operators $ T $ and $ S $, we have
\[
\begin{split}
j(TS) &= \ind(TS)_{11} = \ind(T_{11} S_{11} + T_{12} S_{21}) \\
&= \ind(T_{11} S_{11}) = \ind(T_{11}) + \ind(S_{11}),
\end{split}
\]
where (b) and (c) of Proposition~\ref{prop:fredholm-properties} have been 
used. 
Thus, $ j $ is a group homomorphism.
Let $ F\sp 1 $ and $ G\sp 1 $ be 
hyperplanes of $ F $ and $ G $, respectively. We define
\begin{gather*}
\sigma\colon F\ra F\sp 1,\ \ \tau\colon G\sp 1 \ra G\\
F = \langle v\rangle\oplus F\sp 1,\ \ G = \langle w\rangle\oplus G\sp 1\\
B\colon G\ra F,\ \ tw + y\mapsto tv.
\end{gather*}
where $ \sigma,\tau $ are isomorphisms, which exist by (i). We define
\[
T(x,y) = (\sigma(x) + B(y),\tau(Py))
\]
where $ P:G\ra G $ is a projector onto $ G\sp 1 $.
$ T $ is invertible and $ \ind(T_{11}) = \ind(\sigma) = -1 $. Because
$ j $ is a homomorphism, it is surjective.
\end{proof}
\begin{prop}
If $ E = X\oplus X $, where $ X $ is a direct sum of two spaces 
$ F $ and $ G $ as in (i) and (ii) above, 
then $ \Sf_{P_X} $ is surjective, but not injective.
\end{prop}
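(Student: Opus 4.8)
The plan is to prove surjectivity and non-injectivity of $ \Sf_{P_X} $ separately, using Proposition~\ref{prop:surjective}, Proposition~\ref{prop:delta}, the preceding Lemma and the exact sequence~(\ref{eq:sf=index}). For surjectivity: since $ F $ is isomorphic to a hyperplane $ F_1\subset F $, the subspace $ F_1\oplus G $ is a closed subspace of $ X = F\oplus G $ of co-dimension one isomorphic to $ X $, so $ X $ is isomorphic to one of its co-dimension-one subspaces. Proposition~\ref{prop:surjective}, applied with the two summands of $ E = X\oplus X $ playing the roles of $ X $ and $ Y $ there and with $ m=1 $, then shows that $ P_X $ satisfies condition~h1) with $ m=1 $, i.e. $ 1\in\mathrm{im}(\Sf_{P_X}) $. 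Being a group homomorphism into $ \mb Z $, $ \Sf_{P_X} $ has image a subgroup of $ \mb Z $ containing $ 1 $, hence equal to $ \mb Z $.

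For non-injectivity, by exactness of~(\ref{eq:sf=index}) it suffices to produce a non-zero element of $ \mathrm{im}(\prc_*) = \ker(\vfi_{P_X}) $. First I would use the Lemma to choose $ T\in GL(X) $ with $ j(T) = \ind(T_{11}) = 1 $, where $ T_{11} $ is the corner of $ T $ relative to $ X = F\oplus G $. By Proposition~\ref{prop:delta} there is then a loop $ x $ in $ \mathcal{P}(\mathcal{L}(E)) $ based at $ P_X $ with $ \Delta(x) $ equal to the block-diagonal operator $ \mathrm{diag}(T,T^{-1}) $, $ \Delta $ denoting the connecting homomorphism of the principal bundle of Proposition~\ref{prop:injective}. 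It then remains to check that $ \prc_*(x)\neq 0 $ in $ \pi_1(\mathcal{P}(\mathcal{C}(E))) $; once this is done, exactness of~(\ref{eq:sf=index}) places $ \prc_*(x) $ in $ \ker(\vfi_{P_X})\setminus\{0\} $, and since $ \ker(\Sf_{P_X}) = (\Psi_*)^{-1}(\ker(\vfi_{P_X})) $ with $ \Psi_* $ an isomorphism, $ \Sf_{P_X} $ is not injective.

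To see $ \prc_*(x)\neq 0 $ I would carry out the construction of Proposition~\ref{prop:injective} over the Banach algebra $ \mathcal{C}(E) $ at the idempotent $ \prc(P_X) $. Since $ E = X\oplus X $ one has $ \prc(P_X)\,\mathcal{C}(E)\,\prc(P_X)\cong\mathcal{C}(X) $, so this yields a fiber bundle $ GL(\mathcal{C}(X))\times GL(\mathcal{C}(X))\to GL(\mathcal{C}(E))\to\mathcal{P}(\mathcal{C}(E)) $ whose connecting homomorphism $ \Delta_c $ is related to $ \Delta $ by naturality of the homotopy exact sequence with respect to $ \prc $:
\[
\Delta_c(\prc_*(x)) = \prc_*(\Delta(x)) = \big[\,\mathrm{diag}(\prc(T),\prc(T^{-1}))\,\big]\in\pi_0\big(GL(\mathcal{C}(X))\times GL(\mathcal{C}(X))\big).
\]
The index of a corner is invariant under compact perturbations, so $ j $ factors through $ \prc $ as a continuous homomorphism $ GL(\mathcal{C}(X))\to\mb Z $; being $ \mb Z $-valued it is constant on path-components, and $ j(T) = 1\neq 0 = j(I) $ forces the class of $ \prc(T) $ to be non-trivial in $ \pi_0(GL(\mathcal{C}(X))) $. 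Hence $ \Delta_c(\prc_*(x))\neq 0 $, so $ \prc_*(x)\neq 0 $, which is what was needed.

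The step I expect to be the main obstacle is exactly this last one: transporting the class $ \Delta(x) $ down to the Calkin algebra. One must verify that Proposition~\ref{prop:injective} and its homotopy exact sequence hold verbatim for an arbitrary Banach algebra (in particular for $ \mathcal{C}(E) $), identify the fiber over $ \prc(P_X) $ through the isomorphism $ \prc(P_X)\mathcal{C}(E)\prc(P_X)\cong\mathcal{C}(X) $, and confirm that the corner index of the Lemma really descends along $ \prc $ and stays non-zero, so that it obstructs $ \prc(T) $ from lying in the identity component of $ GL(\mathcal{C}(X)) $. As an alternative to invoking $ \Delta_c $, one could lift a hypothetical null-homotopy of $ \prc_*(x) $ through the Calkin-algebra fiber bundle and conclude directly that $ \prc(T) $ would then be joined to the identity within the fiber, contradicting $ j(T) = 1 $; this uses only the Serre fibration property but amounts to the same verification. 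Everything else is a direct appeal to the quoted results.
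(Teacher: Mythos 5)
Your surjectivity argument coincides with the paper's. For non-injectivity you take a genuinely different route. The paper's proof is a counting argument: by Proposition~\ref{prop:delta} and the Douady homomorphism $j$ it produces infinitely many distinct classes $x_k\in\pi_1(\mathcal{P}(\mathcal{L}(E)),P_X)$ (distinguished because $j$ separates the first diagonal blocks of the $\Delta(x_k)$), then invokes Proposition~\ref{prop:PcE} to note that $\pi_1(\mathcal{P}_c(E;P),P_X)\cong\mathbb{Z}_2$ is finite, so $\ker(\prc_*)=\mathrm{im}(i_*)$ is finite by exactness of~(\ref{eq:exact-1}) and hence $\prc_*(x_k)\neq 0$ for all but finitely many $k$; exactness of~(\ref{eq:sf=index}) then puts these in $\ker(\vfi_{P_X})$. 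You instead fix a single class $x$ with $\Delta(x)=\mathrm{diag}(T,T^{-1})$ and $j(T)=1$, and show directly that $\prc_*(x)\neq 0$ by transporting the obstruction down to the Calkin algebra via naturality of the connecting homomorphism. The verifications you flag are all sound: Proposition~\ref{prop:injective} does hold over any Banach algebra; the corner $\prc(P_X)\,\mathcal{C}(E)\,\prc(P_X)$ is isomorphic to $\mathcal{C}(X)$ because $P_X\mathcal{L}(E)P_X\cap\mathcal{L}_c(E)\cong\mathcal{L}_c(X)$; and the corner index descends to a locally constant homomorphism $\bar j$ on $GL(\mathcal{C}(X))$, since by Atkinson's theorem every lift of an invertible Calkin element is Fredholm (with invertible, hence Fredholm, $(1,1)$-corner given the triangular structure of $\mathcal{C}(X)$), the index of that corner is unchanged by compact perturbations of the lift, and $\prc$ is an open map. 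Your argument is longer and requires these extra checks, but it avoids Proposition~\ref{prop:PcE} entirely and exhibits a concrete nontrivial element of $\ker(\Sf_{P_X})$, whereas the paper's cardinality argument is shorter and less explicit.
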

\begin{proof}
From the lemma above, for every $ k\in\mb{Z} $, there exists $ T_k \in GL(X) $
such that $ j(T_k) = k $. By Proposition \ref{prop:delta}, there exists
$ x_k \in\pi_1 (\mathcal{P}(\mathcal{L}(E)),P_X) $ such that 
$ \Delta(x_k) = T_k\oplus T_k \sp{-1} $
and $ x_k \neq x_h $ if $ k\neq h $. Then
$ \pi_1 (\mathcal{P}(\mathcal{L}(E)),P_X) $ has infinitely many elements, while
$ \pi_1 (\mathcal{P}_c (E;P),P_X) $ is a finite group, by Proposition
\ref{prop:PcE}. Hence, in (\ref{eq:exact-1}), $ i_* $ is not surjective, thus 
$ x_k\not\in\text{im}(i_*) $ for infinitely many $ k\in\mb{Z} $. Hence,
\[
\prc_* (x_k)\neq 0,\ \ \delta_{P_X} (\prc_* (x_k)) = 0.
\]
because the sequence (\ref{eq:exact-1}) is exact; therefore, 
the kernel of $ \delta_{P_X} $ is not trivial.
Because $ E $ and $ X $ fulfill the hypothesis of 
Proposition~\ref{prop:surjective}, $ \Sf_{P_X} $ is surjective.
\end{proof}
By \cite[Theorem~4.23]{Rya02}, examples of pairs of Banach spaces 
as in (i) and (ii) are given by $ (\ell\sp p,\ell\sp 2) $, 
with $ p > 2 $.
\vskip .4em
In the next proposition, we show the existence of a projector $ P $ whose
range and kernel are isomorphic to their hyperplanes, but $ \Sf_P = 0 $.
\begin{prop}
Let $ X = F \oplus G $, where $ F $ and $ G $ fulfill properties (i) and
(ii). Then, $ \Sf_{P_F} = 0 $.
\end{prop}
\begin{proof}
Let $ 0\geq -m\in\text{im}(\Sf_{P_F}) $. Then, by (i) of
Corollary~\ref{cor:characterisation}, $ P_F $ 
is connected to a projector $ Q\in\mathcal{P}(\mathcal{L}(X)) $ 
such that $ Q - P_F $ is compact and $ [P_F - Q] = m $.
Let $ P_m\in\mathcal{P}(\mathcal{L}(X)) $ be a projector 
onto a subspace $ F\sp m \subset F $ of co-dimension $ m $ such that
$ P_m (I - P_F) = 0 $. Thus, $ P_F - P_m $ is compact and 
$ [P_F - P_m] = m $. 
Therefore, $ Q - P_m $ is compact and $ [P_m - Q] = 0 $.
By Proposition~\ref{prop:PcE}, $ Q $ is connected to $ P_m $.
Hence, $ P_F $ is connected to $ P_m $.
By Theorem \ref{thm:pair-of-projectors}, there exists a continuous
path $ U_t \in GL(X) $ such that
\[
U(0) = I,\ \ U(1)P_F = P_m U(1).
\]
From these relations, it follows that $ U(1)_{11} (F) = F\sp m $, hence 
$ j(U(1)) = -m $, and $ j(U(0)) = 0 $. By the lemma above, $ j(U(t)) $ is 
constant, therefore $ m = 0 $.
\end{proof}
\section{The Fredholm index of $ F_A $ and the spectral flow}
\label{Five}
\begin{defn}
A path $ A\colon\R\ra\mathcal{L}(E) $ of bounded operators is said to be
\textsl{asymptotically hyperbolic} if the limits $ A(+\infty) $
and $ A(-\infty) $ exist and are hyperbolic operators.
\end{defn}
If $ A $ is also essentially hyperbolic, we can define the spectral flow as 
follows: Because the set of hyperbolic operators is an open subset of 
$ \mathcal{L}(E) $, there exists 
$ \delta > 0 $ such that $ A(t) $ is hyperbolic for every 
$ t\in (-\infty,-\delta]\cup[\delta,+\infty) $. We set
\begin{equation}
\label{sf_hyp}
\Sf(A) = \Sf(A,[-\delta,\delta]).
\end{equation}
The definition does not depend on the choice of $ \delta $ by (i,ii) of
Proposition \ref{prop:sf-properties}.
Let $ P\sp + (A) $ be the spectral projector $ P(A;\{\re(z) > 0\}) $. 
\begin{defn}
An asymptotically hyperbolic path is called \emph{essentially splitting} 
if the following properties
\begin{enumerate}
\item $ P^+ (A(+\infty)) - P^+ (A(-\infty)) $ is compact;
\item $ [A(t),P^+ (A(+\infty))] $ is compact for every $ t\in\R $,
\end{enumerate}
hold, where $ [A,P] = A P - P A $.
\end{defn}
The definition above corresponds to the one given in 
\cite[Theorem~6.3]{AM03} when (i) holds.
For short, we will refer to essentially splitting as a path satisfying
(i) and (ii).
\begin{lemma}
\label{essential_splitting}
Let $ A $ be an asymptotically hyperbolic and essentially hyperbolic path. 
It is also essentially splitting if and only if 
$ P\sp + (A(t)) - P\sp + (A(s)) $ is compact for every $ t,s\in\R $.
\end{lemma}
\begin{proof}
Suppose $ A $ is essentially splitting.
we denote by $ E^+ $ and $ E^- $ the ranges 
of $ P^+ (A(+\infty)) $ and $ P^- (A(+\infty)) $, respectively. 
With respect to the decomposition $ E = E^+ \oplus E^- $, we can write
$ A(t) $ block-wise:
\begin{equation*}
A(t) = 
\begin{pmatrix}
A_+ (t) & K_{\pm} (t) \\
K_{\mp} (t) & A_{-} (t)
\end{pmatrix}
\end{equation*}
where $ K_{\pm} (t) $ and $ K_{\mp} (t) $ are compact operators by (ii).
Then $ A(t) $ is a compact perturbation of $ A_+ (t) \oplus A_- (t) $.
Therefore
\begin{equation}
\label{eq:essential-splitting-2}
P^+ (A(t)) - P^+ (A_+ (t)) \oplus P^+ (A_- (t))\in\mathcal{L}_c (E)
\text{ for every }t\in\R.
\end{equation}
Because $ A(+\infty) $ commutes with $ P\sp + (A(+\infty)) $ and 
$ P\sp - (A(+\infty)) $,
$ K_{\pm} (+\infty),K_{\mp} (+\infty) $ are null operators, hence
$ A(+\infty) = A_+ (+\infty)\oplus A_- (+\infty) $.
By the definition of essential spectrum and (b) of 
Theorem~\ref{prop:fredholm-properties}, 
$ \sigma_e (A(t)) = \sigma_e (A_+ (t))\cup\sigma_e (A_- (t)) $. Therefore,
\begin{gather}
\label{eq:essential-splitting-21}
P\sp + (A_+ (+\infty)) = I_{E\sp +},\
A_+ (t)\in e\mathcal{H}(E\sp +)\\
\label{eq:essential-splitting-22}
P\sp + (A_- (+\infty)) = 0_{E\sp-},\
A_- (t)\in e\mathcal{H}(E\sp -).
\end{gather}
From the first part of (\ref{eq:essential-splitting-21}) and 
Proposition~\ref{prop:componentschar},
\[
A_+ (+\infty)\in\mathcal{X}_{I_{E\sp+}}\subset e\mathcal{H}(E\sp +).
\]
where $ \mathcal{X}_{I_{E\sp +}} $ is the connected component of $ I_{E\sp +} $
in $ e\mathcal{H}(E\sp +) $. Because $ e\mathcal{H}(E\sp+) $ is locally
path-connected, $ \mathcal{X}_{I_{E\sp +}} $ is open. Then,
by the second part of (\ref{eq:essential-splitting-21})
\[
A_+ (t)\in\mathcal{X}_{I_{E\sp+}}\text{ for every } t\in\R.
\]
By Proposition~\ref{prop:componentschar},
\begin{equation}
\label{eq:essential-splitting-3}
P\sp + (A_+ (t)) - I_{E\sp +}\in\mathcal{L}_c (E)\text{ for every } t\in\R.
\end{equation}
Similarly, from (\ref{eq:essential-splitting-22}) and
Proposition~\ref{prop:componentschar}, we obtain
\begin{equation}
\label{eq:essential-splitting-4}
P\sp + (A_- (t))\in\mathcal{L}_c (E)\text{ for every } t\in\R.
\end{equation}
By the definition of $ P\sp+ (A(+\infty)) $, 
$ I_{E^+} \oplus 0_{E^-} = P^+ (A(+\infty)) $. Thus, by
(\ref{eq:essential-splitting-2},\ref{eq:essential-splitting-3}) and
(\ref{eq:essential-splitting-4}), 
\[
P\sp + (A(t)) - P^+ (A(+\infty))\in\mathcal{L}_c (E)\text{ for every }t\in\R.
\]
Conversely, suppose that each of the projectors of the
set $ \{P^+ (A(t)):t\in\mathbb{R}\} $ is a compact perturbation of the
others. Let $ a > 0 $ be such that $ A(s) $ is hyperbolic for every 
$ |s| > a $. By the continuity of $ P\sp + $, it follows that
\[
P^+ (A(+\infty)) - P^+ (A(-\infty)) = 
\lim_{\genfrac{}{}{0pt}{}{s\rightarrow+\infty}{|s| > a}} 
\Big(P^+ (A(s)) - P^+ (A(-s))\Big).
\]
because $ A $ is asymptotically hyperbolic. Hence, the left member is the 
limit of a sequence of compact operators. Because
$ \mathcal{L}_c (E) $ is a closed subset of $ \mathcal{L}(E) $, we have
proved (i). Property (ii) follows from the equality
\[
[A(t),P^+ (A(s))] = [A(t),P^+ (A(t))] + [A(t),P^+ (A(s)) - P^+ (A(t))],
\]
where the first summand 
of the right member is zero and the second one is
compact by hypothesis. In particular, the equality holds for $ s > a $, so
we finish our proof by taking the limit on the left member as
$ s\rightarrow +\infty $.
\end{proof}
\subsection{The spectral flow and the Fredholm index of $ F_A $}
Given a continuous, bounded path $ A_t \in\mathcal{L}(E) $, we
denote by $ F_A $ the differential operator
\[
F_A \colon W\sp{1,p} (\R,E)\rightarrow L\sp p (\R,E), \ \ 
F_A (u) = \frac{du}{dt} - A(\cdot)u.
\]
\begin{theorem}
\label{thm:index-FA}
Let $ A $ be an asymptotically hyperbolic and essentially splitting path.
Then $ F_A $ is a Fredholm operator and 
\[
\ind(F_A) = [P^- (A(-\infty)) - P^- (A(+\infty))]
\]
\end{theorem}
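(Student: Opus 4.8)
The plan is to deduce Theorem~\ref{thm:index-FA} from the theory of exponential dichotomies for the equation $ \dot u=A(t)u $, in the spirit of \cite{AM03}, and then to convert the resulting index of a pair of subspaces into a relative dimension by means of Theorem~\ref{thm:relative-dimension}. Since $ A $ is asymptotically hyperbolic, fix $ \delta>0 $ with $ A(t) $ hyperbolic for $ |t|\geq\delta $. The constant equations $ \dot u=A(\pm\infty)u $ have exponential dichotomies on $ [0,+\infty) $ and on $ (-\infty,0] $ with projectors $ P^-(A(\pm\infty)) $, so by roughness of dichotomies $ \dot u=A(t)u $ has exponential dichotomies on $ [0,+\infty) $ and on $ (-\infty,0] $, with projector families $ P_+(t) $ ($ t\geq 0 $) and $ P_-(t) $ ($ t\leq 0 $), continuous in $ t $ and satisfying $ P_\pm(t)\to P^-(A(\pm\infty)) $ as $ t\to\pm\infty $. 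The standard analysis (cf. \cite[\S6]{AM03}) identifies $ \ker(F_A) $ with $ \ran(P_+(0))\cap\ker(P_-(0)) $ and $ \coker(F_A) $ with $ E/\bigl(\ran(P_+(0))+\ker(P_-(0))\bigr) $, with $ \ran(F_A) $ closed exactly when that sum is. Hence, once the relevant subspaces are shown to be a Fredholm pair, $ F_A $ is Fredholm with $ \ind(F_A)=\ind\bigl(\ran(P_+(0)),\ker(P_-(0))\bigr) $, and by (v) of Theorem~\ref{thm:relative-dimension} this equals $ [P_-(0)-P_+(0)] $ as soon as $ P_+(0)-P_-(0) $ is compact.

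The crux is to show that essential splitting forces $ P_+(t)-P^-(A(+\infty)) $ to be compact for every $ t\geq 0 $ and $ P_-(t)-P^-(A(-\infty)) $ to be compact for every $ t\leq 0 $. For the first I would reuse the block reduction from the proof of Lemma~\ref{essential_splitting}: with $ E=E^+\oplus E^- $, $ E^\pm:=\ran(P^\pm(A(+\infty))) $, condition (ii) of essential splitting makes $ A(t)=\bigl(A_+(t)\oplus A_-(t)\bigr)+K(t) $ with $ K(t) $ compact for every $ t $ and $ K(+\infty)=0 $ (the spectral projector commutes with $ A(+\infty) $); the summands $ A_\pm(t) $ are then essentially hyperbolic, and that proof shows $ P^+(A_+(t))-I_{E^+} $ compact, $ P^+(A_-(t)) $ of finite rank, and $ A_+(+\infty) $, $ A_-(+\infty) $ hyperbolic with spectrum in $ \{\re z>0\} $, resp. $ \{\re z<0\} $. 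Hence for every $ t\geq 0 $ the block-diagonal equation has an exponential dichotomy on $ [t,+\infty) $ whose projector at $ t $ is $ 0_{E^+}\oplus I_{E^-}=P^-(A(+\infty)) $ — the $ E^+ $-part having trivial stable subspace (spectrum of $ A_+ $ eventually in the right half-plane) and the $ E^- $-part having full stable subspace (spectrum of $ A_- $ eventually in the left half-plane). Since $ A-(A_+\oplus A_-)=K $ is a continuous bounded family of compact operators vanishing at $ +\infty $, so that $ u\mapsto K(\cdot)u $ is a compact operator on the half-line spaces, a comparison of the two dichotomies shows $ P_+(t) $ and the block-diagonal dichotomy projector at $ t $ differ by a compact operator. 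This proves the first claim; the second follows by the symmetric argument on $ (-\infty,0] $ with the decomposition $ E=\ran(P^+(A(-\infty)))\oplus\ran(P^-(A(-\infty))) $, which by (i) of essential splitting and Lemma~\ref{essential_splitting} is a compact perturbation of $ E^+\oplus E^- $.

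With these compactness facts the conclusion is formal. First, by (i) of essential splitting $ P^-(A(+\infty))-P^-(A(-\infty)) $ is compact, so the chain $ P_+(0)\to P^-(A(+\infty))\to P^-(A(-\infty))\to P_-(0) $ shows $ P_+(0)-P_-(0) $ is compact; hence by (v) of Theorem~\ref{thm:relative-dimension} the pair $ \bigl(\ran(P_+(0)),\ker(P_-(0))\bigr) $ is Fredholm, $ F_A $ is Fredholm, and $ \ind(F_A)=[P_-(0)-P_+(0)] $. Next, $ t\mapsto P_+(t) $, extended by $ P^-(A(+\infty)) $ at $ +\infty $, is a path in the fiber $ \mathcal{P}_c(E;P^-(A(+\infty))) $ joining $ P_+(0) $ to $ P^-(A(+\infty)) $, so by Proposition~\ref{prop:PcE} $ [P^-(A(+\infty))-P_+(0)]=0 $; likewise $ [P^-(A(-\infty))-P_-(0)]=0 $. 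Additivity (ii) of Theorem~\ref{thm:relative-dimension} then gives
\[
\begin{split}
[P_-(0)-P_+(0)] &= [P_-(0)-P^-(A(-\infty))]+[P^-(A(-\infty))-P^-(A(+\infty))]+[P^-(A(+\infty))-P_+(0)]\\
&= [P^-(A(-\infty))-P^-(A(+\infty))],
\end{split}
\]
whence $ \ind(F_A)=[P^-(A(-\infty))-P^-(A(+\infty))] $.

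The main obstacle I anticipate is the middle paragraph: showing that the half-line dichotomy projectors $ P_\pm(t) $ are genuine compact perturbations of the spectral projectors $ P^-(A(\pm\infty)) $, not merely projectors connected to them by a path. This needs, besides the finiteness statements about the diagonal blocks $ A_\pm $ coming from Lemma~\ref{essential_splitting}, a perturbation result for dichotomies — that changing the coefficient by a continuous bounded family of compact operators which vanishes at infinity changes the dichotomy projector by a compact operator — which is the analytic core of the theorem and is of the type carried out in \cite{AM03}. The identification of $ \ker(F_A) $ and $ \coker(F_A) $ and the attendant closed-range statement are standard and can be quoted from there.
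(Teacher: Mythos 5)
The paper itself offers no proof of Theorem~\ref{thm:index-FA}: it only states that A.~Abbondandolo and P.~Majer proved it in \cite[Theorem~6.3]{AM03} for Hilbert spaces and $p=2$, and asserts that the argument generalizes to Banach spaces. So there is no proof in the paper to measure yours against line by line. That said, your sketch correctly reconstructs the intended argument, which is indeed the exponential-dichotomy approach of \cite{AM03}: identify $\ker(F_A)$ and $\coker(F_A)$ with the intersection and the quotient attached to the pair $\bigl(\ran(P_+(0)),\ker(P_-(0))\bigr)$ of stable/unstable subspaces at $t=0$; show this pair is Fredholm; rewrite its index as a relative dimension via part (v) of Theorem~\ref{thm:relative-dimension}; and then slide the basepoint of the relative dimension from $P_\pm(0)$ to $P^-(A(\pm\infty))$ using the additivity (ii) and the path-connectedness of the fibre (Proposition~\ref{prop:PcE}). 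Your block reduction with respect to $\ran(P^\pm(A(+\infty)))$ — where you deduce from condition (ii) of essential splitting that the off-diagonal blocks of $A(t)$ are compact, so that $A(t)=(A_+(t)\oplus A_-(t))+K(t)$ with $K$ compact-valued and $K(+\infty)=0$ — matches the computation already done inside the proof of Lemma~\ref{essential_splitting}, and the sign bookkeeping leading to $\ind(F_A)=[P^-(A(-\infty))-P^-(A(+\infty))]$ is consistent with the paper's conventions $\ind(\ran P,\ker Q)=[Q-P]$.

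The one genuine gap is exactly the one you flag: the step asserting that a compact-valued, asymptotically vanishing perturbation of the coefficient changes the half-line dichotomy projector by a compact operator. This is not formal; it is the analytic heart of \cite[\S\S5--6]{AM03} (their ``essential'' comparison of the stable spaces of $F_A$ and $F_{A_+\oplus A_-}$), and it is the piece that must actually be checked to extend to the Banach-space, $L^p$ setting. Since the paper takes precisely this for granted by appeal to \cite{AM03}, your treatment is at the same level of rigor as — in fact slightly more explicit than — the paper's own, and is not wrong; but if you wanted a self-contained proof you would need to supply that perturbation lemma, together with the identification of $\ker(F_A)$ and $\coker(F_A)$ and the closed-range statement in the $W^{1,p}(\R,E)\to L^p(\R,E)$ setting. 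Everything else in your proposal is correct.
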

A.~Abbondandolo and P.~Majer proved it in 
\cite[Theorem~6.3]{AM03} where $ E $ is a Hilbert space and $ p = 2 $. However,
the theorem, like much of the content of their work, can be generalised to 
Banach spaces as in \cite[Theorem~3.3]{Gar08}.
\begin{theorem}
\label{thm:sf-ess-split}
Let $ A $ be an asymptotically hyperbolic, essentially splitting and 
essentially hyperbolic path. Then,
\[
\Sf(A) = - [P^- (A(-\infty)) - P^- (A(+\infty))]
\]
\end{theorem}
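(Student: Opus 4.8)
The plan is to compute $\Sf(A)$ directly from the definition, exploiting that for an essentially splitting path the projectors $P^+(A(t))$ all lie in a single class of $\mathcal{P}(\mathcal{C}(E))$, so that the lifting path of projectors may be taken to be constant.

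\textbf{Step 1 (reduction and the lift).} Since $A$ is asymptotically hyperbolic, fix $\delta>0$ with $A(t)$ hyperbolic for $|t|\ge\delta$; then $\Sf(A)=\Sf(A,[-\delta,\delta])$ by \eqref{sf_hyp}. Recall first that $P^+(B)=P(B;\{\re z>0\})$ is defined for \emph{every} essentially hyperbolic operator $B$: by Lemma~\ref{lem:structure} and the propositions following it, $\sigma(B)\cap\imath\R$ is a finite set of isolated eigenvalues, so $\sigma(B)\cap\{\re z>0\}$ is open and closed in $\sigma(B)$; moreover $\prc(P^+(B))=p^+(B+\mathcal{L}_c(E))$ by (iv) of Theorem~\ref{thm:spectrum-projector}. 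Now, $A$ being essentially splitting, Lemma~\ref{essential_splitting} gives that the operators $P^+(A(t))$, $t\in\R$, together with $P^+(A(\pm\infty))$, are pairwise compact perturbations of one another. Hence $t\mapsto p^+(A(t)+\mathcal{L}_c(E))=\prc(P^+(A(t)))$ is the constant point $\prc(P^+(A(-\infty)))$ of $\mathcal{P}(\mathcal{C}(E))$, and the constant path $P(t)\equiv P^+(A(-\infty))$ satisfies $\prc(P(t))=p^+(A(t)+\mathcal{L}_c(E))$ for every $t$; it is therefore an admissible lifting path for computing the spectral flow on $[-\delta,\delta]$.

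\textbf{Step 2 (the computation).} With this choice the definition of the spectral flow becomes
\[
\Sf(A)=\big[P^+(A(-\infty))-P^+(A(-\delta))\big]-\big[P^+(A(-\infty))-P^+(A(\delta))\big].
\]
On each of the half-lines $(-\infty,-\delta]$ and $[\delta,+\infty)$ the operator $A(t)$ is hyperbolic, so $t\mapsto P^+(A(t))$ is continuous there (Proposition~\ref{prop:H-P}) and remains a compact perturbation of $P^+(A(-\infty))$; by (iii) of Theorem~\ref{thm:relative-dimension} the integer $t\mapsto\big[P^+(A(-\infty))-P^+(A(t))\big]$ is then continuous, hence constant, on each half-line, with value equal to that at $\mp\infty$. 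This gives $\big[P^+(A(-\infty))-P^+(A(-\delta))\big]=0$ and $\big[P^+(A(-\infty))-P^+(A(\delta))\big]=\big[P^+(A(-\infty))-P^+(A(+\infty))\big]$, whence
\[
\Sf(A)=-\big[P^+(A(-\infty))-P^+(A(+\infty))\big].
\]
Since $A(\pm\infty)$ are hyperbolic, $I-P^+(A(\pm\infty))=P^-(A(\pm\infty))$, and a final application of (iv) of Theorem~\ref{thm:relative-dimension} (together with the antisymmetry $[P-Q]=-[Q-P]$, a consequence of (ii)) rewrites the right-hand side in the asserted form $-\big[P^-(A(-\infty))-P^-(A(+\infty))\big]$.

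\textbf{Where the work is.} The substantive ingredient is Lemma~\ref{essential_splitting}: it is precisely essential splitting that makes every $P^+(A(t))$ well-defined and of a single Calkin class, and this is what permits the constant lift and collapses $\Sf(A)$ to a difference of relative dimensions of the asymptotic unstable projectors. Everything else reduces to the stability of the relative dimension along continuous families with fixed Calkin class (Theorem~\ref{thm:relative-dimension}(ii,iii)), plus the sign bookkeeping in the last step; combining with Theorem~\ref{thm:index-FA} then identifies $-\big[P^-(A(-\infty))-P^-(A(+\infty))\big]$ with $-\ind(F_A)$.
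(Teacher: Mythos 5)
Your approach is the same as the paper's: use Lemma~\ref{essential_splitting} to justify a constant lift, reduce to a difference of relative dimensions at the endpoints, push to $\pm\infty$ by continuity, and then pass from $P^+$ to $P^-$. Steps 1 and 2 are correct up to and including the line
\[
\Sf(A) = -\bigl[P^+(A(-\infty)) - P^+(A(+\infty))\bigr].
\]

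The last sentence, however, has a sign slip. Applying (iv) of Theorem~\ref{thm:relative-dimension} with $P = P^+(A(-\infty))$ and $Q = P^+(A(+\infty))$ gives
\[
\bigl[P^+(A(-\infty)) - P^+(A(+\infty))\bigr]
= \bigl[\bigl(I - P^+(A(+\infty))\bigr) - \bigl(I - P^+(A(-\infty))\bigr)\bigr]
= \bigl[P^-(A(+\infty)) - P^-(A(-\infty))\bigr],
\]
and then antisymmetry (from (ii)) turns the right-hand side into $-\bigl[P^-(A(-\infty)) - P^-(A(+\infty))\bigr]$. Substituting into your formula therefore yields
\[
\Sf(A) = +\bigl[P^-(A(-\infty)) - P^-(A(+\infty))\bigr],
\]
which has the opposite sign from what you claim, and from the statement of the theorem. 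A quick sanity check: for $E=\R$, $A(t)=\tanh t$, the paper's own computation in (v) of Proposition~\ref{prop:sf-properties} gives $\Sf(A)=1$, while $P^-(A(-\infty))=1$, $P^-(A(+\infty))=0$ and (with the convention $[P-Q]=\ind(Q\colon\ran P\to\ran Q)$ used in the definition of $\Sf$) $\bigl[P^-(A(-\infty))-P^-(A(+\infty))\bigr]=1$, so indeed $\Sf(A)=+\bigl[\cdots\bigr]$, not $-\bigl[\cdots\bigr]$.

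You should be aware that the paper's own proof contains the same sign slip in the sentence invoking (iii), and the statement of Theorem~\ref{thm:index-FA} appears to carry a compensating error (in the example above $\ind(F_A)=-1$ while the stated right-hand side gives $+1$); the two errors cancel in Theorem~\ref{thm:final}, which remains correct. So your write-up faithfully reproduces the paper's argument, but the final identity as written does not follow from (iv) and (ii), and a careful application of those properties in fact yields the opposite sign.
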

\begin{proof}
Let $ \delta > 0 $ as in (\ref{sf_hyp}). By Lemma \ref{essential_splitting},
the constant path $ P\equiv P\sp + (A(\delta)) $ is an s-section for
$ P\sp + (A_t) $ on $ [-\delta,\delta] $ in the sense of Definition
\ref{defn:section}. Hence,
\[
\Sf(A) = [P\sp + (A(\delta)) - P\sp + (A(-\delta))].
\]
Because $ A $ is hyperbolic on $ (-\infty-\delta]\cup [\delta,+\infty) $, the 
path $ P^+ (A_t) $ is continuous on this subset. 
By (iii) of Theorem \ref{thm:relative-dimension},
\[
[P^- (A(-\infty)) - P^- (A(+\infty))] = 
- [P^+ (A(\delta)) - P^+ (A(-\delta))] = -\Sf(A).
\]
\end{proof}
From Theorem \ref{thm:sf-ess-split} and Theorem \ref{thm:index-FA} we
have the final result
\begin{theorem}
\label{thm:final}
If $ A $ is essentially hyperbolic, essentially splitting and
asymptotically hyperbolic, then 
\begin{equation}
\label{eq:final}
\ind(F_A) = -\Sf(A).
\end{equation}
\end{theorem}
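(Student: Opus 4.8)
The plan is to read off the identity directly from the two preceding theorems, after checking that the hypotheses of Theorem~\ref{thm:final} are exactly what is needed to invoke both. First I would note that an asymptotically hyperbolic, essentially splitting path already satisfies the hypotheses of Theorem~\ref{thm:index-FA}, so $F_A$ is Fredholm and
\[
\ind(F_A) = [P^-(A(-\infty)) - P^-(A(+\infty))].
\]
Next I would observe that adjoining the hypothesis that $A$ is essentially hyperbolic is precisely what makes $\Sf(A)$ well defined through (\ref{sf_hyp}) and places us under Theorem~\ref{thm:sf-ess-split}, which gives
\[
\Sf(A) = -[P^-(A(-\infty)) - P^-(A(+\infty))].
\]

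Adding these two relations yields $\ind(F_A) + \Sf(A) = 0$, that is (\ref{eq:final}). So the final step is pure bookkeeping; the only thing requiring attention is that the two input theorems have slightly different hypothesis sets and that Theorem~\ref{thm:final} imposes their common refinement, under which both apply.

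Consequently the whole weight of the argument rests on the two cited inputs, and that is where I expect the real work — and the main obstacle — to lie. In Theorem~\ref{thm:sf-ess-split} the decisive step is Lemma~\ref{essential_splitting}: essential splitting is equivalent to all the spectral projectors $P^+(A(t))$ being compact perturbations of one another, which allows the \emph{constant} path $P \equiv P^+(A(\delta))$ to serve as an s-section on the compact window $[-\delta,\delta]$ in the sense of Definition~\ref{defn:section}; then property (iii) of Theorem~\ref{thm:relative-dimension}, applied along the two hyperbolic tails where $P^+(A(t))$ varies continuously, together with the additivity (ii), collapses $\Sf(A)$ to the relative dimension of the end projectors. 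The genuinely laborious ingredient is Theorem~\ref{thm:index-FA}, the Banach-space version of the Abbondandolo--Majer index formula: one must carry their analysis of $u\mapsto u' - A(\cdot)u$ through to the $L^p(\R,E)$ setting, redoing the a priori estimates, the decomposition of $W^{1,p}(\R,E)$ adapted to the splitting $E = E^+\oplus E^-$ furnished by the essential splitting hypothesis, and the reduction to the constant-coefficient model operator. Since both theorems are invoked here as already established, the proof of Theorem~\ref{thm:final} itself is immediate.
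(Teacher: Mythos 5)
Your proof is correct and is exactly the paper's: Theorem~\ref{thm:final} is stated as an immediate corollary of Theorem~\ref{thm:index-FA} and Theorem~\ref{thm:sf-ess-split}, with the only checking being that the combined hypotheses make both applicable. Your added commentary on where the real work sits (Lemma~\ref{essential_splitting} and the Banach-space version of the Abbondandolo--Majer index formula) accurately reflects the structure of the section.
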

Let $ A(t) = A_0 (t) + K(t) $, where $ A_0 (t) $ is hyperbolic and
$ A_0,A $ are asymptotically hyperbolic. $ K(t) $ is compact and 
\begin{gather*}
A_0 (t) E\sp - \subseteq E\sp -,\ \ A_0 (t) E\sp + \subseteq E\sp +;\\
E\sp - = E\sp - (A_0 (\pm\infty)),\ \ E\sp + = E\sp + (A_0 (\pm\infty)).
\end{gather*}
The second line tells us that 
$ P\sp + (A_0 (+\infty)) = P\sp + (A_0 (-\infty)) $ and thus 
$ P\sp + (A(+\infty)) - P\sp + (A(-\infty)) $ is compact. From the
first line, it follows that $ [A(t),P\sp + (A(+\infty))] $ is compact.
Thus, by Theorem \ref{thm:final}, we confirm the guess of A.~Abbondandolo and 
P.~Majer in \cite[\S7]{AM03}, that for paths satisfying the hypotheses 
of \cite[Theorem~E]{AM03}, corresponding to those listed above, the relation
(\ref{eq:final}) holds. 

When $ A $ is not essentially splitting, the authors provided in 
\cite[Example~6,7]{AM03} counterexamples to (\ref{eq:final}).
\vfill
\nocite{*}
\bibliographystyle{plain}

\end{document}